\begin{document} 


\newtheorem{theorem}{Theorem}[section]
\newtheorem{lemma}[theorem]{Lemma}
\newtheorem{corollary}[theorem]{Corollary}
\newtheorem{proposition}[theorem]{Proposition}
\theoremstyle{definition}
\newtheorem{definition}[theorem]{Definition}
\theoremstyle{remark}
\newtheorem{remark}[theorem]{Remark}

\newcommand\prf{\begin{proof}}
\newcommand\prfe{\end{proof}}
\newcommand\be{\begin{equation}}
\newcommand\ee{\end{equation}}
\newcommand\bea{\begin{eqnarray}}
\newcommand\eea{\end{eqnarray}}
\newcommand\beas{\begin{eqnarray*}}
\newcommand\eeas{\end{eqnarray*}}

\newcommand{\R}{\mathbb R}
\newcommand{\N}{\mathbb N}
\newcommand{\T}{\mathbb T}
\newcommand{\K}{\mathbb S}
\newcommand{\leftexp}[2]{{\vphantom{#2}}^{#1}{#2}}

\newcommand{\todo}[1]{\vspace{5 mm}\par \noindent
\marginpar{\textsc{ \hspace{.2 in}   \textcolor{red}{ To Fix}}} \framebox{\begin{minipage}[c]{0.95
\textwidth} \tt #1 \end{minipage}}\vspace{5 mm}\par}

\newcommand\g{\partial}
\newcommand\E{\mathcal{ E}}
\newcommand\D{\mathcal{D}}
\renewcommand\t{\bar{\partial}} 
\renewcommand\div{\mbox{div}}
\newcommand\curl{\mbox{curl}}

\newcommand\qn{\open{q}}
\newcommand\bp{{\bar \partial}}
\newcommand\lessim{\lesssim}





\title{Global stability and decay for the classical Stefan problem}

\author{Mahir Had\v zi\'c and Steve Shkoller}
\address{{\large Accepted for publication in Communications on Pure and Applied Mathematics}}

\address{Department of Mathematics,
King's College London,
Strand, London, WC2R 2LS, UK}
\email{mahir.hadzic@kcl.ac.uk}
\address{Mathematical Institute, University of Oxford, Andrew Wiles Building, Radcliffe Observatory Quarter, Woodstock Road, Oxford, OX2 6GG, UK}
\email{shkoller@maths.ox.ac.uk}

\subjclass{35R35,  35B65 , 35K05, 80A22}
\keywords{free-boundary problems,  Stefan problem, regularity, stability, global existence}




\begin{abstract}
The classical one-phase Stefan problem describes the temperature distribution
in a homogeneous medium undergoing a phase transition, such as ice melting to water.
This is accomplished by solving the heat equation on a time-dependent domain
whose boundary is transported by the normal derivative of the temperature along
the evolving and a priori unknown free-boundary.    We establish a global-in-time stability result for nearly
spherical geometries and small temperatures, using a novel hybrid methodology,
which combines energy estimates, decay estimates, and Hopf-type inequalities.
\end{abstract}
\maketitle   



 \tableofcontents



\section{Introduction}
\subsection{The problem formulation}
We consider the problem of global existence and  asymptotic stability of classical solutions to the {\em classical} Stefan problem
describing the evolving free-boundary between the liquid and solid phases.
The temperature of the liquid $p(t,x)$ 
and the {\it a priori unknown}  moving phase boundary $\Gamma(t)$ must satisfy the
following system of equations:
\begin{subequations}
\label{eq:stefan}
\begin{alignat}{2}
p_t-\Delta p&=0&&\ \text{ in } \ \Omega(t)\,;\label{eq:heat}\\
\mathcal{V} (\Gamma(t))&=-\g_np  && \ \text{ on } \ \Gamma(t)\,;\label{eq:neumann}\\
p&=0&& \ \text{ on } \ \Gamma(t)\,;\label{eq:dirichlet}\\
p(0,\cdot)=p_0\,, & \ \Omega (0)=\Omega _0&&\,.\label{eq:initial}
\end{alignat}
\end{subequations}
For each instant of time $t \in [0,T]$, $\Omega(t)$ is a time-dependent open subset of $\R^d$ with $d\ge 2$, and $\Gamma(t):=\partial \Omega(t)$ denotes
the moving, time-dependent  free-boundary.

The heat equation (\ref{eq:heat})  models thermal diffusion
 in the bulk $\Omega(t)$ with thermal diffusivity set to $1$. The boundary transport equation  (\ref{eq:neumann})
states that each point on the moving boundary is transported with normal velocity equal to
 $-\g_np=-\nabla p\cdot n$,  the normal derivative of $p$ on $\Gamma(t)$.  Here
 $n$ denotes the outward pointing unit normal to $\Gamma(t)$,  and 
$ \mathcal{V} (\Gamma(t))$ denotes the speed or the normal velocity of the hypersurface $\Gamma(t)$.
The homogeneous Dirichlet boundary condition~(\ref{eq:dirichlet}) is termed the {\em classical Stefan condition}
and problem~(\ref{eq:stefan}) is called the {\em classical Stefan problem}. It implies that the
freezing of the liquid occurs at a constant temperature $p=0$. 
Finally, we  must specify the initial temperature distribution
$p_0:\Omega_0\to\R$, as well as the initial geometry  $ \Omega _0$.   Because the liquid phase $\Omega(t)$ is characterized by
the set $\{ x \in \mathbb{R}  ^d \ : \ p(x,t) >0 \}$, we shall consider initial data $p_0 >0$ in $\Omega_0$.
Problem~(\ref{eq:stefan}) belongs to the category of free boundary problems which are of {\it parabolic-hyperbolic} type.   Thanks to
 (\ref{eq:heat}), the parabolic Hopf lemma implies that 
$\g_np(t)<0$ on $\Gamma(t)$ for $t>0$,  so we impose the  {\it non-degeneracy condition} or so-called
{\em Taylor sign condition}\footnote{This type of stability condition dates back to the early work of Lord Rayleigh  \cite{Ray} and Taylor \cite{Tay} in fluid mechanics,
and appears as a necessary well-posedness condition on the initial data in many  free-boundary problems wherein
the effects of surface tension are ignored;
examples include the Hele-Shaw cell, the water waves equations \cite{Wu97}, and the full Euler equations in both 
incompressible~\cite{CoSh07} and compressible form~\cite{CoSh12, CoHoSh12}.   
}
\be\label{eq:tay}
-\g_np_0\ge \lambda > 0 \quad\text{on}\,\,\,\,\Gamma(0)
\ee
on our initial temperature distribution.  
Under the above assumptions, we proved in Had\v{z}i\'{c} \& Shkoller \cite{HaSh} that (\ref{eq:stefan}) is indeed well-posed.
\subsection{The reference domain $\Omega$ and the dimension}  
For our reference domain, we choose the unit ball in $ \mathbb{R}^2  $ given by
\[
\Omega=B(0,1) := \{x\in\R^2:\,|x|<1\} ,
\] 
with boundary  $\Gamma  = \K^1 :=\{x\in\R^2:\,|x|=1\}$.   We shall consider initial domains $\Omega_0$ whose boundary $\Gamma_0$
is a graph over the reference boundary $\Gamma$. In order to simplify our presentation, 
we consider evolving domains $\Omega(t)$ in $ \mathbb{R}  ^2$,
but as we shall explain in Section~\ref{se:general}, our methodology works equally well in any dimension $d \ge 2$.

Our choice of the reference domain $\Omega$ follows from two considerations.  First,  we need employ only
 {\it one} global coordinate system near the boundary $\Gamma$, rather than a collection of local
coordinate charts that a  more general domain would necessitate,  and the use of one coordinate system greatly simplifies
the presentation of  our energy identities, that provide very natural estimates for the second-fundamental
form of the evolving free-boundary $\Gamma(t)$.   Second,
 we shall need quantitative Hopf-type inequalities in order to bound the term defined in~(\ref{eq:tay})
from below, and such estimates are available in a particularly satisfying form in the case of the nearly spherical domains, thanks
to the explicit construction of comparison functions in \textsc{Oddson} \cite{Od}.   

\subsection{Notation}
%
%
For any $s\geq0$ and given functions $f:\Omega\to\R$, $\varphi:\Gamma\to\R$ we set
$$
\|f\|_s:=\|f\|_{H^s(\Omega)} \text{ and }
|\varphi|_s:=\|\varphi\|_{H^s(\Gamma)}.
$$
$H^s(\Omega)'$ shall denote the dual space of $H^s(\Omega)$, while on the boundary, $H^s(\Gamma )' = H^{-s}( \Gamma )$.
If $i=1,2$ then 
$f,_i:=\g_{x^i}f$ is the partial derivative of $f$ with respect to $x^i$. Similarly,
$f,_{ij}:=\g_{x^i}\g_{x^j}f$, etc.   For time-differentiation,
$f_t:=\g_tf$.   Furthermore, for a function $f(t,x)$,  we shall often write $f(t)$ for $f(t, \cdot )$, and
$f(0)$ to mean $f(0,x)$.
We use $\t:= \tau \cdot \nabla $ to denote the tangential derivative, so that
\[
\t f:=\g_{\theta}f,\quad\t^kf:=\g_{\theta}^kf \,,
\]
where $ \theta \in [0,2\pi)$ denotes the angular component in polar coordinates.
The Greek letter $\alpha$ will often be reserved for multi-indices $\alpha=(\alpha_1,\alpha_2)$,
with $\g^{\alpha}:=\g_{x_1}^{\alpha_1}\g_{x_2}^{\alpha_2}$ and
$|\alpha|=\alpha_1+\alpha_2$.
The identity map on $\Omega$ is denoted by $e(x)=x$, while the identity matrix is denoted by $\text{Id}$.
 We use $C$ to denote a universal (or generic) constant that may change from inequality to inequality.
We write $X\lesssim Y$ to denote 
$X\leq C\, Y$. 
We use the notation $P(\cdot)$ to denote generic real polynomial function of its argument(s) with positive coefficients. 
The Einstein summation convention is employed, indicating   summation over repeated indices.
The $L^2$-inner product on $\Omega$ is denoted by $(\cdot,\cdot)_{L^2}$. 
\subsection{Fixing the domain}   
We transform the Stefan problem~(\ref{eq:stefan}), set on the moving domain $\Omega(t)$,  to an
equivalent problem on the fixed domain $\Omega$.  For many problems in fluid dynamics, the  Lagrangian flow map of the fluid
velocity provides a natural family of diffeomorphisms which can be used to fix the domain, but for the classical Stefan problem, we
use instead (in the parlance of fluid dynamics) the so-called Arbitrary Lagrangian-Eulerian (ALE) family of diffeomorphisms; these ALE maps 
interpolate between the Lagrangian and Eulerian representations of the equations.   For this problem, we choose a simple type of ALE map,
consisting of {\it harmonic coordinates}, also known as the harmonic gauge.

\subsubsection{The diffeormorphism $\Psi(t)$}
We  represent our moving domain $\Omega(t)$  as the image of a time-dependent family of diffeomorphisms
$\Psi(t) :  \Omega \to \Omega(t)$.   In order to define these diffeomorphisms, we let
$h(t,\cdot):\Gamma\to\R$ denote  the {\it signed height function} whose graph (over $\Gamma$) is the set $\Gamma (t)$. For $\xi\in \Gamma=\K^1$, we define
the map 
$$\Psi(t,\xi)=(1+h(t,\xi))\xi=R(t,\xi)\xi$$ 
which is a diffeomorphism of $\Gamma$ onto $\Gamma(t)$ as long as $h(t)$ remains a graph.
The outward-pointing  unit normal  vector $n(t,\cdot )$ to the moving surface $\Gamma(t)$ is defined by
\[
(n \circ \Psi)(t, \xi) =(R\xi)_{\theta}^{\perp}/|(R\xi)_{\theta}^{\perp}| \,.
\]
We shall henceforth drop the explicit composition with the diffeomorphism $\Psi$, and simply write
$$ n(t, \xi) =(R\xi)_{\theta}^{\perp}/|(R\xi)_{\theta}^{\perp}|  $$
for the unit normal to the moving boundary at the point $\Psi(t, \xi) \in \Gamma(t)$.

Introducing  the unit normal and tangent vectors to the reference surface $\Gamma$ as
\be\label{eq:normaltangent}
N:=\xi, \ \ \tau:=\xi_{\theta} \quad \text{ or equivalently } \quad N(\theta) = ( \cos \theta, \sin \theta), \quad \tau(\theta) = (-\sin \theta , \cos \theta ) \,,
\ee
we write the unit normal to $\Gamma(t)$ as 
\begin{equation}\label{ssss77}
n(t,\xi)=\frac{RN - h_{\theta}\tau}{\sqrt{R^2+R_{\theta}^2}}\,.
\end{equation} 
The evolution of $h(t)$ is then given by 
\be\label{eq:velformula}
h_t = v \cdot N(\theta) - \frac{h_\theta}{R} v\cdot \tau(\theta)  \,.
\ee

Assuming that the signed height function $h(t, \cdot)$ is sufficiently regular and remains a graph, we can define a diffeomorphism $\Psi:\Omega\to\Omega(t)$
as the elliptic extension of the  boundary diffeomorphism $\xi \mapsto (1+ h( \xi , t)) \xi $, by solving the following Dirichlet problem
\begin{align} 
\Delta\Psi&=0 \ \text{ in } \ \Omega,\nonumber\\
\Psi(t,\xi)&=R(t,\xi)\xi \ \,\,\,\, \xi\in\Gamma.\label{eq:gauge}
\end{align} 
Since the identity map $e: \Omega \to \Omega$ is harmonic in $\Omega$ and $\Psi -e = h \xi$ on $\Gamma$, 
standard elliptic regularity theory for solutions to (\ref{eq:gauge}) shows that
\begin{equation}\label{DM}
\|\Psi-e\|_{H^s(\Omega)}\leq C\|h\|_{H^{s-0.5}(\Gamma)}\,, \ s>0.5,
\end{equation} 
so that for $h(t)$ sufficiently small and $s$ large enough,  the Sobolev embedding theorem shows that $\nabla \Psi(t)$  is close to the identity matrix $ \operatorname{Id} $, and by the inverse function theorem, each
$\Psi(t)$ is a
diffeomorphism.  

\subsubsection{The temperature and velocity variables on the fixed domain $\Omega$}\label{se:ALE}  
First we introduce the velocity variable $u = - \nabla p $ in $\Omega(t)$.  Next, we introduce the following new variables set on
the fixed domain $\Omega$:
\begin{alignat*}{2} 
          q&= p\circ \Psi\ && \text{(\textit{temperature})},\\
          v&=u\circ\Psi\ && \text{(\textit{velocity})},\\
          w&=\Psi_t\ && \text{(\textit{extension of boundary velocity vector})},\\
	A&= [D\Psi]^{-1}\ &&\text{(\textit{inverse of the deformation tensor})},\\
	J&= \det D\Psi\ &\quad&\text{(\textit{Jacobian determinant})},\\
	a&= JA\ && \text{(\textit{cofactor matrix of the deformation tensor})}. 
\end{alignat*}
The relation $u=-\nabla p$ is then written as
$v^i+A^k_iq,_k=0$ for $ i=1,2$.
By the chain rule, 
$$q_t = p_t \circ \Psi + (\nabla p\circ \Psi) \cdot \Psi_t = p_t \circ \Psi - v \cdot w \,,$$
and
$$
\Delta p\circ\Psi=\Delta_{\Psi} q:=A^j_i(A^k_iq,_k),_j \,.
$$
Letting $\tilde n = J^{-1}(R^2+R_{\theta}^2)^ {\frac{1}{2}}  n$,  we see that
\be\label{eq:ntilde}
\tilde{n}_i(t,x)=A^k_i(t,x) N_k(x) \,,
\ee
and equation (\ref{eq:velformula}) can be written  as $h_t = v \cdot \tilde n/ R_J$, where $R_J=RJ^{-1}$.
Note that $R_J=RJ^{-1}=(1+h)J^{-1}$ is very close to $1$.

\subsubsection{The classical Stefan problem set on the fixed domain $\Omega$}
The classical Stefan problem on  the fixed domain $\Omega$ 
is written as
\begin{subequations}
\label{eq:ALE}
\begin{alignat}{2}
q_t-A^j_i(A^k_iq,_k),_j&=-v\cdot \Psi_t&& \ \text{ in } \ (0,T] \times \Omega \,,\label{eq:ALEheat}\\
v^i+A^k_iq,_k&=0&&\ \text{ in } \ [0,T] \times \Omega\,,  \label{eq:ALEv} \\
q&=0&& \ \text{ on } \  [0,T] \times \Gamma \,,\label{eq:ALEdirichlet}\\
h_t &= v \cdot N - (1+h) ^{-1} h_\theta v\cdot \tau&& \ \text{ on } \  (0,T] \times \Gamma \,,  \label{eq:ALEneumann2}\\
\Delta \Psi &=0&& \ \text{ on } \ [0,T] \times \Omega\,,\label{eq:Psi_elliptic}\\
 \Psi &=(1+h) N&& \ \text{ on } \ [0,T] \times \Gamma \,,\label{eq:Psi_laplace}\\
q&=q_0>0&&\ \text{ on } \ \{t=0\} \times \Omega   \,,\label{eq:ALEinitial} \\
h&=h_0&&\ \text{ on } \ \{t=0\} \times \Gamma  \,,\label{eq:ALEinitialh}
\end{alignat}
\end{subequations}
where the initial boundary $\partial \Omega_0$ is given as a graph over $\Omega$ with the initial height function $h_0$, i.e.
$\partial\Omega_0=\{x\in\R^2, \ x=(1+h_0(\xi))\xi, \ \xi\in\K^1\}$.   
Note that $\Phi=\Psi(0):\Omega\to\Omega_0$  is a near identity transformation,  mapping the reference domain $\Omega$ onto the initial domain $\Omega_0$.
The initial temperature function $q_0$ equals $p_0 \circ \Phi$.
Problem~(\ref{eq:ALE}) is a  reformulation of the problem~(\ref{eq:stefan}). 

Henceforth, without loss of generality, we shall assume that the initial domain $\Omega_0$ {\em is} the unit ball $B_1(0)$ or in other
words $h_0=0$.  In this case, we set $\Phi=e$, where $e: \Omega \to \Omega$ is the identity map, and $\Psi(t)|_{t=0} = e$.
In Section~\ref{se:general}, we will explain the minor modification required when $h_0 \neq 0$,  as well as the case that the dimension $d=3$.

Observe that the boundary condition~(\ref{eq:ALEneumann2}) implies that
\be\label{eq:ALEneumann}
\Psi_t\cdot {n}(t)=v\cdot {n}(t) \ \text{ on } \ [0,T] \times \Gamma  \ \text{ so that } \ \Psi(t)(\Gamma) = \Gamma(t) \,.
\ee
\subsubsection{The energy and dissipation functions}\label{se:norms}
Near $\Gamma=\partial \Omega $,  it is convenient to use tangential derivatives 
$\t:=\g_{\theta}$ with $\theta$ denoting the polar angle, while near the origin, Cartesian partial derivatives $\partial_{x_i}$ are
natural. 
For this reason,  we introduce a  non-negative $C^{\infty}$  cut-off function $\mu:\bar{\Omega}\to\R_+$ with the
property
\[
\mu(x)\equiv0\,\,\,\,\text{ if }\,\,|x|\leq1/2;\qquad\mu(x)\equiv1\,\,\,\,\text{ if }\,\,3/4\leq|x|\leq1.
\]

\begin{definition}[Higher-order norms]
The following high-order energy and dissipation functionals are fundamental to our analysis:
\begin{align} 
\E(t)&=\E(q,h)(t) \nonumber \\
& := \frac{1}{2}\sum_{a+2b\leq5}\|\mu^{1/2}\t^a\partial_t^bv\|_{L^2_x}^2
+\frac{1}{2}\sum_{b=0}^2|(-\g_Nq)^{1/2}R\, J^{-1} \, \t^{6-2b}\g_t^bh|_{L^2_x}^2
+\frac{1}{2}\sum_{a+2b\leq6}\|\mu^{1/2}\big(\t^a\partial_t^bq+\t^a\partial_t^b\Psi\cdot v\big)\|_{L^2_x}^2 \nonumber \\
& + \sum_{|\vec{\alpha}|+2b\leq5}\|(1-\mu)^{1/2}\g_{\vec{\alpha}}\partial_t^bv\|_{L^2_x}^2
+\frac{1}{2}\sum_{|\vec{\alpha}|+2b\leq6}\|(1-\mu)^{1/2}\big(\g_{\vec{\alpha}}\partial_t^bq+\g_{\vec{\alpha}}\partial_t^b\Psi\cdot v\big)\|_{L^2_x}^2
\label{eq:ALEenergy}
\end{align} 
and
\begin{align} 
\D(t)& =\D(q,h)(t) \nonumber \\
&:=
\sum_{a+2b\leq6}\|\mu^{1/2}\t^a\partial_t^bv\|_{L^2_x}^2
+\sum_{b=0}^2|(-\g_Nq)^{1/2}R\, J^{-1} \, \t^{5-2b}\g_t^bh_t|_{L^2_x}^2
+\sum_{a+2b\leq5}\|\mu^{1/2}\big(\t^a\partial_t^bq_t+\t^a\partial_t^b\Psi_t\cdot v\big)\|_{L^2_x}^2 \nonumber \\ 
&+\sum_{|\vec{\alpha}|+2b\leq6}\|(1-\mu)^{1/2}\g_{\vec{\alpha}}\partial_t^bv\|_{L^2_x}^2
+\sum_{|\vec{\alpha}|+2b\leq5}\|(1-\mu)^{1/2}\big(\g_{\vec{\alpha}}\partial_t^bq_t+
\g_{\vec{\alpha}}\partial_t^b\Psi_t\cdot v\big)\|_{L^2_x}^2\,.
\label{eq:ALEdissipation}
\end{align} 
\end{definition} 
 
Note that the boundary norms of the height function are weighted by
$\sqrt{-\g_N q}$.
We thus introduce the time-dependent function 
\[
\chi(t):=\inf_{x\in\Gamma}(-\g_Nq)(t,x)>0,
\]
which will be used to track the weighted behavior of $h$.
We will show that $\E$ is indeed equivalent to 
$$\sum_{l=0}^3\|\g_{t}^lq\|_{H^{6-2l}(\Omega)}^2
+\chi(t)\sum_{l=0}^3|\g_t^lh|_{H^{6-2l}(\Gamma)}^2 \,,
$$
and that $\D$ is equivalent to
$$ \|q\|_{H^{6.5}(\Omega)}^2+
\sum_{l=0}^{2}\|\g_t^lq_t\|_{H^{5-2l}(\Omega)}^2
+\chi(t)\sum_{l=0}^2|\g_{t}^{l+1}h|_{H^{5-2l}(\Gamma)}^2.
$$

The elliptic operator in the parabolic equation (\ref{eq:ALE}a) for $q$ has coefficients that depend on
$A = [D \Psi ] ^{-1} $, which in turn depend on $h$; hence, the regularity of $q$ is limited (and, in fact, determined)
by the regularity of $h$ on the boundary $\Gamma$.     Since the regularity of $h$ is given by norms which are
 weighted by the
factor $\chi(t)$, a naive application of  elliptic estimates would thus lead to the crude bound
\begin{equation}\label{crude1}
\|\g_t^lq\|_{6.5-2l}^2\lesssim\frac{\D}{\chi(t)}\,,
\end{equation} 
which could a priori grow in time.
However, by using the fact that lower-order norms of $q$ have exponential decay (in time), 
estimate (\ref{crude1})  can be improved to yield
\be\label{eq:improved}
\|q\|_{6.5}^2 + \|\g_t^lq_t\|_{5-2l}^2\lesssim e^{-\gamma t}\E+\D,\quad l=0,\dots,2
\ee
for some {\em positive} constant $\gamma>0$. 
This is one of the essential ingredients of our analysis,
as (\ref{eq:improved}) will be used to 
control error terms arising from higher-order energy estimates in Section~\ref{se:long}.
\par
In order to capture the exponential  decay of the temperature $q$, we 
introduce the {\it lower-order} decay norms:
\be\label{eq:ebeta}
E_{\beta}(t):=e^{\beta t}\Big(\sum_{b=0}^2\|\g_t^bq(t)\|_{H^{4-2b}(\Omega)}^2
+\sum_{b=0}^1\|\g_t^bv\|_{H^{3-2b}(\Omega)}^2\Big)
,\quad D(t):=\sum_{b=0}^2\|\g_t^bq(t)\|_{H^{5-2b}(\Omega)}^2,
\ee
with the constant $\beta$ denoting  a positive real number given by
\be\label{eq:beta}
\beta:=2\lambda_1-\eta,
\ee
where $\lambda_1$ is the smallest eigenvalue of the Dirichlet-Laplacian 
on $\Omega=B_1(0)$ 
and $\eta$  is a small positive constant related to the size of the initial data, which will be made precise below.
Note that the smallness of $E_{\beta}$ in particular implies an exponential 
decay (in time) estimate for the
$H^4$-norm of the temperature $q(t)$.

\subsubsection{Taylor sign condition or non-degeneracy condition on $q_0$}
With respect to $q_0 = p_0 \circ \Phi$, condition (\ref{eq:tay}) becomes $\inf_{x\in\Gamma}[-\g_Nq_0(x)]  \ge \lambda > 0$ on $\Gamma$.
For initial temperature distributions that are not necessarily strictly positive in $\Omega$, this condition was shown to be necessary
for local well-posedness for (\ref{eq:stefan}) (see \cite{HaSh, Me, PrSaSi}).  On the other hand, if we require strict
 positivity of our initial temperature function\footnote{Condition (\ref{eq:positive}) is natural, since it determines the phase: $\Omega(t) = \{ q(t) >0\}$.}, 
\be\label{eq:positive}
q_0>0\quad\text{ in }\Omega\,,
\ee
then the parabolic Hopf lemma (see, for example, \cite{Fr64})  guarantees that $-\g_N q(t,x)>0$ for $0<t<T$ on some a priori (possibly small)
time interval, which, in turn, shows that $\E$ and $\D$ are norms for $t>0$, but uniformity may be lost as $t\to 0$.  To ensure a uniform lower-bound
for $-\g_N q(t)$ as $t\to 0$,  we impose the Taylor sign condition with the following lower-bound\footnote{When $h_0\neq 0$, the unit normal to
the initial surface $\Gamma_0$ is given by 
$N=\frac{(1+h_0) \xi  - \partial_\theta h_0 \tau}{\sqrt{(1+h_0)^2+\partial_\theta h_0^2}}$ where $ \xi = ( \cos \theta, \sin\theta)$ and $\tau = (-\sin \theta, \cos \theta)$.}:
\be\label{eq:wlog}
-\g_Nq_0 \ge C \int_\Omega q_0 \, \varphi_1 dx \,,
\ee
Here, $\varphi_1$ is the positive first eigenfunction of the Dirichlet-Laplacian, and $C>0$ denotes a {\em universal} constant. The uniform
lower-bound in (\ref{eq:wlog}) thus ensures that our solutions are continuous in time; moreover, (\ref{eq:wlog}) allows us to  establish a 
time-dependent {\it optimal lower-bound} for the quantity
$
\chi(t)=\inf_{x\in\Gamma}(-\g_Nq)(t,x) >0
$
for all time $t\geq0$, which will be crucial for our analysis.

\subsubsection{Compatibility conditions}

The definition of our higher-order energy function $\E$, restricted to time $t=0$,
requires an explanation of the time-derivates of $q$ and $h$ evaluated  at $t=0$.   Specifically,
the values $q_t|_{t=0}$, 
$q_{tt}|_{t=0}$, $h_t|_{t=0}$ and $h_{tt}|_{t=0}$ are defined via
space-derivatives using equations~(\ref{eq:ALEheat}) and~(\ref{eq:ALEneumann2}).
To ensure that the solution is continuously differentiable in time  at $t=0$ we must impose  
compatibility conditions on the initial data (such conditions are, of course, only necessary for regular initial data).
By restricting the equation~(\ref{eq:ALEheat}) to the boundary at time $t=0$ and using the fact that $q_t(0) =0$ on $\Gamma$ and that $A^k_i|_{t=0}=\delta^k_i$, where
$\delta^k_i$ denotes the Kronecker delta which equals $1$ if $k=i$ and $0$ otherwise,
we obtain the first-order compatibility condition
\be\label{eq:comp1}
\Delta q_0=(\g_Nq_0)^2 \quad\text{on}\,\,\,\Gamma.
\ee
Upon differentiating~(\ref{eq:ALEheat}) with respect to time,  and then restricting  to $\Gamma$ at $t=0$ and using~(\ref{eq:comp1}),
we arrive at the second-order compatibility condition
\be\label{eq:comp2}
\Delta ^2 q_0 = \Delta | \g_Nq_0|^2 + 2 \g_N ( \Delta q_0 -  |\g_N q_0|^2) \, \partial_N q_0-2| \partial_{NN} q_0|^2
\quad\text{on}\,\,\,\Gamma,
\ee
where we have used that $h_t (t,\theta) = v \cdot  [N(\theta) -\tau(\theta) h_\theta (1 + h) ^{-1} ]$.

We note that our functional framework only requires specification of two higher-order compatibility conditions (the condition $q_0=0$ on $\Gamma$ being the zeroth-order
condition).   

\subsubsection{Main result}
Our main result is a global-in-time stability theorem for solutions of
the classical Stefan problem  for surfaces which are nearly spherical
and for temperature fields close to zero. 
The notion
of ``near" is measured by our energy norms as well as the dimensionless quantity
\be\label{eq:k}
K:=\frac{\| q_0\|_4}{\|q_0\|_0}.
\ee
as expressed in the following 
\begin{theorem}\label{th:main}
Let $(q_0,h_0)$ satisfy the 
Taylor sign condition~(\ref{eq:wlog}), the strict positivity 
assumption~(\ref{eq:positive}), and the compatibility conditions~(\ref{eq:comp1}),~(\ref{eq:comp2}). 
Let $K$ be defined as in~(\ref{eq:k}).
Then there exists an $\epsilon_0>0$ and a monotonically 
increasing function $F:(1, \infty )\to\R_+$, such that if
\be\label{eq:smallassum}
\E(q_0,h_0)<\frac{\epsilon_0^2}{F(K)},
\ee
then there exist unique solutions $(q,h)$ to  problem~(\ref{eq:ALE})
satisfying
\[
\sup_{0\leq t\leq\infty}\E(q(t),h(t))<C\epsilon_0^2,
\]
for some universal constant $C>0$.
Moreover, the temperature $q(t) \to 0$ as $t \to \infty $ with bound
\[
\|q\|_{H^4(\Omega)}^2\leq C e^{-\beta t},
\]
where $\beta=2\lambda_1-O(\epsilon_0)$ and $\lambda_1$ is the smallest eigenvalue of the 
Dirichlet-Laplacian on the unit disk. 
The moving boundary $\Gamma(t)$ settles asymptotically to some nearby steady surface 
$\bar{\Gamma}$ and we have the uniform-in-time estimate
\[
\sup_{0\leq t<\infty}|h-h_0|_{4.5} \lessim \sqrt{ \epsilon _0}
\]
\end{theorem}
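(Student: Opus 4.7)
The plan is to prove Theorem \ref{th:main} by a continuation argument built on three simultaneous bootstrap bounds: the high-order energy $\E$, the lower-order decay functional $E_\beta$, and a quantitative lower bound for the Taylor-sign function $\chi(t)$. Starting from the local well-posedness result of \cite{HaSh}, which delivers a unique solution on some interval $[0,T^\star)$ with $\E$ finite, I would fix constants $M_1,M_2,m>0$ and define
\be\label{bootset}
T:=\sup\bigl\{t\in[0,T^\star):\ \E(s)\le M_1\epsilon_0^2,\ E_\beta(s)\le M_2\epsilon_0^2,\ \chi(s)\ge m\, e^{-\beta s}\ \forall s\in[0,t]\bigr\},
\ee
and then show that on $[0,T]$ each of the three bounds improves, so that $T=T^\star=\infty$ and the local solution extends globally.

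The high-order step is a differential inequality of the form $\tfrac{d}{dt}\E+\D\lesssim P(\sqrt{\E})\,\D+\text{error}$, obtained by differentiating the defining sum in (\ref{eq:ALEenergy}), substituting (\ref{eq:ALE}), and integrating by parts. The precise pairing of $\t^a\g_t^bq$ with $\t^a\g_t^b\Psi\cdot v$ in $\E$ is engineered so that the quasilinear commutators cancel the top-order indefinite boundary contributions, producing instead the positive boundary dissipation weighted by $\sqrt{-\g_Nq}$ that appears in $\D$. Error terms are $P(\sqrt{\E})$ times factors of $\|q\|_{6.5}^2$ or $\|\g_t^lq_t\|_{5-2l}^2$; these are exactly the norms controlled by the improved estimate (\ref{eq:improved}), which replaces the dangerous $\chi(t)^{-1}$ factor coming from the crude elliptic bound (\ref{crude1}) with a harmless $e^{-\gamma t}\E$ contribution that, combined with the bootstrap assumption, is integrable in time.

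For the low-order decay I would test the equations (\ref{eq:ALEheat})--(\ref{eq:ALEv}) and their time-differentiated versions against $q$ and $\g_t^bq$, using the Poincar\'e inequality $\lambda_1(\Omega(t))\|q\|_0^2\le\|\nabla q\|_0^2$ on the perturbed domain. Spectral continuity under $H^{4.5}$-small boundary perturbations gives $\lambda_1(\Omega(t))\ge\lambda_1-O(\sqrt{\epsilon_0})$, so choosing $\eta$ in (\ref{eq:beta}) slightly larger than this perturbation yields $\tfrac{d}{dt}E_\beta\le C\sqrt{\E}\,E_\beta$, improving the bootstrap bound on $E_\beta$. The Taylor-sign improvement then follows from Oddson's quantitative parabolic Hopf inequality \cite{Od} for nearly spherical domains, which gives $\chi(t)\gtrsim \int_\Omega q(t)\varphi_1\,dx$; pairing this with (\ref{eq:wlog}) and with a matching lower bound $\int_\Omega q(t)\varphi_1\,dx\gtrsim e^{-\beta t}\int_\Omega q_0\varphi_1\,dx$, obtained by projecting the perturbed heat equation onto $\varphi_1$ and analyzing the resulting scalar ODE, delivers the sharp lower bound $\chi(t)\ge m\,e^{-\beta t}$, improving the third bootstrap bound. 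The asymptotic settling of $\Gamma(t)$ then follows by integrating (\ref{eq:ALEneumann2}) in time: the bound $|v(t)|_{4.5}\lesssim e^{-\beta t/2}\sqrt{\E}$ yields $\int_0^\infty |h_t|_{4.5}\,dt\lesssim\sqrt{\epsilon_0}$, whence $h(t)\to\bar h$ in $H^{4.5}(\Gamma)$ at the claimed rate.

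The main obstacle is closing the high-order energy estimate without losing regularity. Because the coefficient matrix $A=[D\Psi]^{-1}$ in (\ref{eq:ALEheat}) depends on $h$, and $h$ is controlled only in norms weighted by $\chi(t)^{1/2}$, the naive elliptic regularity argument produces the crude bound (\ref{crude1}), in which $\chi(t)^{-1}$ can grow like $e^{\beta t}$ under the sharp lower bound on $\chi$, destroying any chance of integrating the energy inequality globally. Upgrading (\ref{crude1}) to (\ref{eq:improved}) is the technical heart of the argument: the solution $q$ must be decomposed into a component controlled by the exponentially decaying low-order norm $E_\beta$ and a residual absorbed by the dissipation $\D$, in a way that remains compatible with the $h$-dependent coefficients. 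All other steps, including the convergence of $h$ and the exponential decay of $\|q\|_4$, follow once this improvement is in hand and the three bootstrap bounds are closed.
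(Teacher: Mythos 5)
Your proposal reproduces the overall bootstrap framework of the paper (three simultaneous bounds on $\E$, $E_\beta$, and $\chi(t)$; a priori estimates; Oddson's Hopf-type inequality; the improved elliptic estimate upgrading \eqref{crude1} to \eqref{eq:improved}), but it has a genuine gap: you misidentify where the proof actually breaks and omit the mechanism that repairs it. You claim that ``the precise pairing of $\t^a\g_t^bq$ with $\t^a\g_t^b\Psi\cdot v$ in $\E$ is engineered so that the quasilinear commutators cancel the top-order indefinite boundary contributions, producing instead the positive boundary dissipation weighted by $\sqrt{-\g_Nq}$.'' This is not what happens. The energy identity in Proposition~\ref{lm:lemma1} produces, on the right-hand side, the boundary integral
\[
\mathcal{N} = -\int_0^T\!\!\int_{\Gamma}\g_Nq_t\,R_J^2\,|\t^{6}h|^2\,d\theta\,dt\,,
\]
and this term involves $h$, not $h_t$, so it is \emph{not} absorbed by the dissipation $\D$ (which contains $|\sqrt{-\g_Nq}\,\t^{5}h_t|_0^2$). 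Since $-\g_Nq_t$ decays at exactly the same rate as $-\g_Nq$, the term $\mathcal{N}$ is of the \emph{same order} as the boundary contribution to $\E$, so it cannot be absorbed by a factor of $\sqrt{\epsilon_0}$ either. Your proposed differential inequality $\tfrac{d}{dt}\E + \D \lesssim P(\sqrt{\E})\D + \text{error}$ with all errors controlled by \eqref{eq:improved} would not close: the best one can do using the bootstrap lower bound $\chi(t)\gtrsim c_1 e^{-(\lambda_1+\eta/2)t}$ and the barrier estimate $|\g_Nq_t|_{L^\infty}\lesssim K^2c_1e^{-\beta t/2}$ is the exponentially growing bound $CK^2\int_0^t e^{\eta s}\E(s)\,ds$, which gives only finite-time control.

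The missing idea is the splitting of the time interval at $T_K=\bar C\ln K$ and the sign of $\g_Nq_t$. On $[0,T_K]$ the paper does treat $\mathcal{N}$ as an error term and lets $\E$ grow like $e^{2CK^2t}$; the smallness condition $\E(0)<\epsilon_0^2/F(K)$, with $F(K)$ as in \eqref{eq:F}, is precisely what ensures $\E(T_K)$ is still of size $\epsilon^2$ after this transient. On $[T_K,\infty)$ one does \emph{not} estimate $\mathcal{N}$ at all: Lemma~\ref{lm:positive} shows via a Duhamel expansion in the Dirichlet eigenbasis that $-q_t(T_K,\cdot)>0$ on $\Omega$, and a barrier/maximum-principle argument (Step~2 of Section~\ref{se:mainproof}) propagates $\g_Nq_t>0$ forward in time, so $\mathcal{N}\le 0$ has a favorable sign and can simply be dropped. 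Without this transient-versus-asymptotic split, the theorem as stated cannot be proved, and the specific $K$-dependence of $F(K)$ is unmotivated. A secondary issue: your low-order inequality $\tfrac{d}{dt}E_\beta\lesssim\sqrt{\E}\,E_\beta$ would only give a growing bound $E_\beta(t)\le E_\beta(0)e^{C\sqrt{\epsilon_0}t}$; the paper instead runs a $q_{tt}$-level energy estimate combined with Poincar\'e and elliptic regularity (Lemma~\ref{lm:heat}) to get the decay $\|q_{tt}(t)\|_0^2\le\|q_{tt}(0)\|_0^2e^{-(2\lambda_1-O(\epsilon))t}$, which yields a \emph{uniform} improvement $E_\beta(t)+\int_0^tD\le\tfrac{\tilde C}{2}E_\beta(0)$.
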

\begin{remark}\label{re:fk}
The increasing function $F(K)$ given in (\ref{eq:smallassum}) has an explicit form.  For generic constants $\bar{C},C>1$
chosen in Sections~\ref{se:long} and~\ref{se:main} below, 
\be\label{eq:F}
F(K):=\max\{8K^{2C\bar{C}K^2},\bar{C}^{10}(\ln K)^{10}K^{20\bar{C}\lambda_1}\}.
\ee
\end{remark}
\begin{remark} The use of 
the constant $K$  in our smallness assumption~(\ref{eq:smallassum}) allows us to determine a time $T = T_K$
when the dynamics of the Stefan problem become strongly dominated by the  projection of  $q$ onto the first eigenfunction $\varphi_1$
of the Dirichlet-Laplacian.
Explicit knowledge of the $K$-dependence in the smallness assumption~(\ref{eq:smallassum}) permits the use of energy estimates 
to show that solutions exist in 
our energy space on the time-interval $[0,T_K]$.  For $t\ge T_K$, 
certain error terms (that cannot be controlled by our energy and dissipation functions for large $t$) become
 {\em sign-definite} with a good sign.
\end{remark}
\subsection{A brief history of prior results on the Stefan problem}
There is a large amount of literature on the classical one-phase Stefan problem. For an  
overview we refer the reader to \textsc{Friedman}~\cite{Fr82}, \textsc{Meirmanov}~\cite{Me} and \textsc{Visintin}~\cite{Vi}. 
First,  {\em weak} solutions were defined by \textsc{Kamenomostskaya}~\cite{Ka}, \textsc{Friedman}~\cite{Fr68}, and \textsc{Ladyzhenskaya, Solonnikov
\& Ural'ceva}~\cite{LaSoUr}.  For the one-phase problem studied herein, a variational formulation
was introduced by \textsc{Friedman \& Kinderlehrer}~\cite{FrKi75}, wherein additional regularity results for the
free surface were obtained.
\textsc{Cafarelli}~\cite{Ca77} showed that in some space-time neighborhood of points $x_0$ on the free-boundary  that have   Lebesgue density,
the boundary is  $C^1$ in both space and time, and  second derivatives of temperature
are continuous up to the boundary.  Under some regularity assumptions on the temperature, Lipschitz regularity of the free boundary was shown by \textsc{Cafarelli}~\cite{Ca78}.
In related work, \textsc{Kinderlehrer \& Nirenberg}~\cite{KiNi77, KiNi78} showed that the free boundary is analytic in space and of second Gevrey class in time,
under the a priori assumption that the free boundary is $C^1$  with certain assumptions on the temperature function.
In~\cite{CaFr79}, \textsc{Caffarelli \& Friedman} showed the continuity of the temperature in $d$ dimensions.
As for the two-phase classical Stefan problem, the continuity of the temperature in $d$ dimensions for  weak solutions
was shown by \textsc{Caffarelli \& Evans}~\cite{CaEv}.

Since the Stefan problem satisfies a maximum principle,
its analysis is ideally suited to  another type of weak solution called the {\it viscosity solution}. Regularity of
viscosity solutions for the two-phase Stefan problem was established by \textsc{Athanasopoulos, Caffarelli \& Salsa} in a 
series of seminal papers~\cite{AtCaSa1, AtCaSa2}. 
Existence of viscosity solutions for the one-phase problem was established by \textsc{Kim}~\cite{Ki}, and for the
two-phase problem by \textsc{Kim \& Po\v zar}~\cite{KiPo}.  A local-in-time
regularity result was established by \textsc{Choi \& Kim}~\cite{ChKi}, where it was shown that initially Lipschitz free-boundaries become
$C^1$ over a possibly smaller spatial region. 
For  an exhaustive 
overview and introduction to the regularity theory of viscosity solutions we refer the reader 
to \textsc{Caffarelli \& Salsa}~\cite{CaSa}. 
In~\cite{Ko98}, \textsc{Koch} showed by the use of von Mises variables and 
harmonic analysis, that an priori $C^1$ free-boundary in the two-phase problem becomes smooth.

Local existence of {\em classical} solutions for the classical Stefan problem was established by 
\textsc{Meirmanov} (see~\cite{Me} and references therein) and \textsc{Hanzawa}~\cite{Ha}. 
Meirmanov regularized the problem by adding artificial viscosity to~(\ref{eq:neumann}) and fixed
the moving domain by switching to the so-called von Mises variables, obtaining solutions with less
Sobolev-regularity than the  initial data.  Similarly, Hanzawa
used  Nash-Moser iteration  to construct a local-in-time solution, but again, 
 with derivative loss.
A local-in-time existence result for the one-phase multi-dimensional Stefan problem was proved  by  \textsc{Frolova \& Solonnikov} \cite{FrSo}, 
using $L^p$-type Sobolev spaces.   For the two-phase
Stefan problem,  a local-in-time existence result for classical solutions  was established by \textsc{Pr\"{u}ss, Saal, \& Simonett} \cite{PrSaSi} in the framework
of $L^p$-maximal regularity theory.

In a related work, local existence for the two-dimensional two-phase Muskat problem (with varying viscosity and density) 
was proved by \textsc{C\'ordoba, C\'ordoba \& Gancedo}~\cite{CoCoGa} and in three dimensions in~\cite{CoCoGa10}.
Their methods rely on a boundary-integral formulation for the Muskat problem, together with the Taylor sign condition.
In a subsequent work~\cite{CoCoGaSt}, various global existence results were established. An overview can be found in~\cite{CaCoGa}.

As to the Stefan problem with surface tension (also known as the Stefan problem with Gibbs-Thomson correction),
global weak solutions (without uniqueness) were given  by \textsc{Almgren \& Wang}, \textsc{Luckhaus}, and \textsc{R\"oger} \cite{AlWa,Lu,Roe}.
In  \textsc{Friedman \& Reitich}~\cite{FriRe} the authors considered the
Stefan problem with small surface tension, i.e.  with $\sigma\ll1$, whereby~(\ref{eq:dirichlet})
is replaced by $v=\sigma\kappa$, $\kappa$ denoting mean curvature of the boundary.
Local existence of classical solutions was studied by \textsc{Radkevich}  \cite{Ra}; \textsc{Escher,  Pr\"uss, \& Simonett}
 \cite{EsPrSi} 
proved a local existence and uniqueness result for classical solutions 
under a smallness assumption on the initial height function
close to  the reference flat boundary. Global existence close to flat hyper-surfaces
was proved by \textsc{Had\v{z}i\'{c} \& Guo} in \cite{HaGu1},  and close to stationary spheres for the
two-phase problem in \textsc{Had\v{z}i\'{c}} \cite{Ha1} and \textsc{Pr\"{u}ss, Simonett, \& Zacher} \cite{PrSiZa}.

In order to understand the asymptotic behavior of the classical Stefan problem on  {\em external} domains, \textsc{Quir\'os \& V\'azquez}~\cite{QuVa}
proved that on a complement of a given bounded domain $G$, with {\em non-zero} boundary conditions on the fixed boundary $\g G$, the solution
to the classical Stefan problem converges,  in a suitable sense,  to the corresponding solution of the Hele-Shaw problem and sharp global-in-time expansion rates 
for the expanding liquid blob are obtained. Moreover,  the blob asymptotically has the geometry of a ball.
Note that the non-zero boundary conditions act as an effective {\em forcing} which is absent from our problem and the 
techniques of~\cite{QuVa} do not  directly apply.  Since the corresponding Hele-Shaw problem (in the absence of surface tension and forcing) is {\em not}
a dynamic problem, possessing only time-independent solutions,  we are not able to use the  Hele-Shaw solution as a  comparison problem for our problem.

A global stability result for the two-phase classical Stefan problem in a smooth
functional framework was also established by {\sc Meirmanov}~\cite{Me} for a specific (and somewhat restrictive) perturbation
of a flat interface, wherein the initial geometry is a strip with imposed Dirichlet temperature conditions on the fixed top and
bottom boundaries, allowing for only one equilibrium solution.
A global existence result for smooth solutions was given by \textsc{Daskalopoulos \& Lee}~\cite{DaLe} under the log-concavity assumption
on the initial temperature function, which in light of the level-set reformulation of the Stefan problem, requires convexity of the
initial domain (a property that is preserved by the dynamics).

In~\cite{HaSh}, we established the local-in-time existence, uniqueness, and regularity for the classical Stefan problem  in $L^2$ Sobolev spaces, without derivative loss, using
 the functional framework given by 
(\ref{eq:ALEenergy}) and~(\ref{eq:ALEdissipation}). This framework is natural,  and  relies on the geometric control of the free-boundary,  analogous to that
 used in the analysis of the free-boundary incompressible Euler equations in
 \textsc{Coutand \& Shkoller}~\cite{CoSh07,CoSh10}; the second-fundamental form  is controlled by a
 a natural coercive quadratic form,  generated from the inner-product of the tangential derivative 
 of the cofactor matrix $a$, and the tangential derivative of the velocity of the moving boundary, and
yields control of the norm
$
\int_{\Gamma}(-\g_Nq(t))|\t^k \, h|^2\,dx' 
$ for any $k\ge 3$. 
The Hopf lemma ensures positivity of $-\g_N q(t)$ and the Taylor sign condition
 on $q_0$ ensures a uniform lower-bound as $t \to 0$;
on the other hand, $-\g_Nq(t) \to 0$ as $t \to \infty $, and so an optimal lower-bound for $(-\g_Nq(t))$ for large $t$ is essential to establish a global existence and
stability theory.

We remark that  global stability of solutions in  the {\em presence} of surface tension (see, for example, \cite{HaGu1,Ha1,PrSiZa}) does not require the use of function framework with a  decaying weight, such as $-\g_Nq(t)$. 
In this regard, the surface tension problem is
simpler for two important reasons:  first, the surface tension contributes a positive-definite energy-contribution that is uniform-in-time, and provides 
better regularity of the free-bounary (by one spatial derivative), and second, the space of equilibria is finite-dimensional and thus it is easier to understand the 
degrees-of-freedom that regulate the asymptotic state of the system, given the initial conditions. 

\subsection{Methodology and outline of the paper}
Our present work builds on our new energy method for the Stefan problem that we developed 
  in~\cite{HaSh}.
We obtain {\it global and uniform} control of the geometry of the free-boundary by controlling the weighted boundary-norm $ \sup_{t\in[0,T]} \| \sqrt{\chi(t)} h\|_6$ for
all $t \ge 0$.   We are thus able to track the location of the moving free-boundary and measure its deviation from the initial state;
this  geometric control is strongly coupled to, and dependent upon,  the
exponential-in-time decay of the temperature function to zero.

There exist infinitely many steady states for the classical Stefan problem:   for any sufficiently smooth hypersurface $\bar{\Gamma}\subset\R^d$,
the pair $(\bar{p},\bar{\Gamma})\equiv(0,\bar{\Gamma})$ forms an equilibrium solution of the Stefan 
problem~(\ref{eq:stefan}).
This abundance of possible attractors for the long-time behavior of the solution $\Gamma(t)$
creates a conceptual difficulty in approaching the question of ``asymptotic'' convergence.

We address  the temporal asymptotics by requiring our initial surface to be a small perturbation of the reference sphere. We use the  energy
spaces introduced in~\cite{HaSh}; moreover, we do not expect to observe any
decay for the height of the moving surface in this norm. Rather,
 given the expectation that the solution {\em does} converge to 
some nearby shape (so that $h$ remains small), we expect the temperature $q(t)$ to converge to zero exponentially fast,
since it is a solution of the nonlinear heat equation~(\ref{eq:ALEheat}).
Returning to the definition of the energy space $ \mathcal{E} $ given in (\ref{eq:ALEenergy}), we 
immediately encounter a potential problem for global-in-time estimates; specifically, 
the {\it coefficient }  $-\g_Nq(t)$ in the energy expression
$\int_{\Gamma}\bigl(-\g_Nq(t) \bigr)|\t^6 h|^2\,d\theta$
is also expected to decay as $t \to \infty $ and it is {\it a priori} unclear how to {\it uniformly-in-time} control the regularity of
the boundary height function $h$.
To understand the relationship between the decay of $q(t)$ and the smallness of $\E$,  we 
will analyze the dynamics in three different and coupled regimes.
\subsubsection*{High-order energy estimates} 
We do not expect the height function $h(t)$ to decay to $0$ as $t \to \infty $;  rather,  we expect $h(t)$ to
remain close to the initial height  function $h_0$. Assuming, without loss of generality,  that  $h_0=0$, 
to guarantee the smallness of $h-h_0=h$ we will prove that
\begin{align} 
\sup_{0\leq s\leq t}\E(s)+\int_0^t\D(s)\,ds & \leq\E(0)+ \sup_{0\leq s\leq t} P( E_\beta) \ \E(s)+\delta\int_0^t\D(s)\,ds \nonumber \\
& \leq\E(0)+O({\epsilon_0})\sup_{0\leq s\leq t}\E(s)+\delta\int_0^t\D(s)\,ds,
\label{eq:enest}
\end{align} 
where $P$ is some polynomial function of the low-norm $E_{\beta}$.
The above estimate yields an {\it a priori} bound on $\E$ if $\epsilon$, $\delta$ and $ \mathcal{E} (0)$  are sufficiently small.  

However, to  close the higher-order energy estimates and thus obtain (\ref{eq:enest}), we must contend with a very problematic
integral (or error term)  given by
\[
\mathcal{N}:=-\int_0^T\int_{\Gamma}\g_Nq_t \, |\t^6h|^2\,d\theta \, dt \,.
\]
Driven by  intuition from the linear heat equation, we expect
$\g_Nq_t$ to decay {\em exactly} as fast as $-\g_Nq$. 
Comparing $\mathcal{N}$ to the energy contribution $\int_{\Gamma}(-\g_Nq)|\t^6h|^2$ above, 
we note that $\mathcal{N}$  {\em cannot}  be controlled by $\E$,  as it is the same order  as
 $\E$.  Hence, 
to bound $\mathcal{N}$, we prove that after a sufficiently long time has elapsed, the quantity
$\g_Nq_t$ turns strictly positive and hence $\mathcal{N}$ can be bounded from above by zero.
In Lemma~\ref{lm:positive} we will quantify the meaning of ``sufficiently long" time $t=T_K$ from the previous sentence,
expressing it as a function of the ratio $K=\|q_0\|_4/\|q_0\|_0$. 

More precisely, we break the total time interval into a (possibly long) transient interval $[0,T_K]$ and $[T_K, \infty )$.
On the transient time-interval  $[0,T_K]$ we {\em do} treat $\mathcal{N}$ as an error term, and
by choosing $\mathcal{E} (0)$ sufficiently small, a straightforward  application of a Gronwall-type inequality verifies
that the interval of existence is greater than $T_K$,  as explained in our proof of the main theorem (given Section~\ref{se:mainproof}).    
The bound for $ \mathcal{N} $ grows exponentially with time, 
and as such,
{\em cannot} be used to establish global-in-time estimates.    Instead, a significantly more refined analysis is employed on the
time-interval
$[T_K, \infty )$, wherein we prove in Lemma~\ref{lm:positive} the negativity of $\mathcal{N}$ for $t=T_K$
and then use a maximum principle-type argument to guarantee the negativity for  all $t\geq T_K$.

\subsubsection*{Exponential decay-in-time of the temperature function $q$}
The last inequality  in (\ref{eq:enest}) holds  only if  $E_\beta$ itself remains small;  in fact
we will prove  that as $t \to \infty $,  $\|q(t)\|_4^2$ has the nearly
 optimal decay rate
\be\label{eq:beta0}
e^{-(2\lambda_1- C\epsilon_0)t},
\ee
where $\lambda_1$ denotes the smallest eigenvalue of the Dirichlet-Laplacian on the unit disk.  
Moreover, the parabolic estimate we prove, will be roughly of the form
\be\label{eq:betaest}
\g_tE_{\beta}+D\leq C(\epsilon_0 +\|q_0\|_4\frac{e^{-\beta t/2}}{\chi(t)^{1/2}})D,
\ee
where the norms $E_{\beta}$ and $D$ have been defined in~(\ref{eq:ebeta}). A nice 
consequence of our analysis is that the potentially growing term,
$\frac{e^{-\beta t/2}}{\chi(t)^{1/2}}$,  in fact remains small and decays in time.
Next, we explain why this is true.
\subsubsection*{Lower bound for the velocity of the free boundary}
We may think of the presence of the denominator $\frac{1}{\chi(t)^{1/2}}$
in the estimate~(\ref{eq:betaest}) as a possible obstruction to
controlling the regularity of $h$ and thus potentially 
preventing uniform ellipticity bounds for the parabolic 
operator~(\ref{eq:ALEheat}).
To deal with this issue,  we need a quantitative lower bound on 
the decay rate of $\chi(t)$. 
Moreover, this lower bound has to favorably compare to the 
size of $e^{-\beta t}$. With some extra work, such a Hopf-type inequality is
 implied by a result of Oddson~\cite{Od},  which leads to the lower-bound
\be\label{eq:odbound}
\chi(t)\gtrsim c_1 e^{-(\lambda_1+ c \epsilon _0) t},
\ee
where  $c>0$ denotes a generic constant, and as before $c_1=\int_{\Omega}q_0\varphi_1$ is the first 
coefficient in the eigenfunction expansion of the initial datum $q_0$ with respect to 
the $L^2$-orthonormal eigenbasis of the Dirichlet-Laplacian on the unit disk.
Finally, combining~(\ref{eq:beta0}) and~(\ref{eq:odbound}),
we will show in Lemma~\ref{lm:useful},
that for small initial data, 
\be\label{eq:fund}
\|q_0\|_4\frac{e^{-\beta t/2}}{\chi(t)^{1/2}} \lesssim \sqrt{\epsilon_0} e^{-\gamma^* t}.
\ee
for some positive constant $\gamma^*$. 

The result of Oddson~\cite{Od} relies on a good choice of a barrier function that, combined with a maximum principle, allows for  very precise information on the decay rate.
That choice is, however,  only one possible choice of a comparison function, and it is possible that there are different ones since~\cite{Od} gives nearly {\em sharp} decay rate only
in a nearly {\em radial} regime. If nearly radial, it is possible that in a viscosity or weak solution framework, one can use comparison principle arguments to deduce that ``no-thin tentacles" form (cf.~\cite{JeLeSh} which is in spirit close to~\cite{QuVa}, but again relies on presence of the forcing term) and the moving boundary remains in an annulus of width
$O(\epsilon)$. To that end, but in absence of forcing, the ideas from~\cite{AtCaSa2, ChKi, QuVa} may be very valuable - they would require a construction of an adaptive family of comparison functions that yield precise decay rates as time evolves.
%
In  forthcoming work, we plan to address the Stefan problem on arbitrary domains 
diffeomorphic to the unit ball, as well as the case of the two-phase Stefan problem. In both instances and not unrelated to the above discussion, we shall need a better, new choice of barrier functions related to the existence of 
so-called half-eigenvalues for the extremal Pucci operators in order to get the sharp decay rates. In particular, our approach is insensitive to the convexity properties of the initial domain, but it requires sufficient regularity.

Another advantage of  the techniques developed 
in this paper is that it provides a general and robust framework for addressing 
the global stability questions for related free boundary problems in fluid mechanics in absence of surface tension.

\subsubsection*{Plan of the paper}
In Section~\ref{se:basic},  we introduce the bootstrap assumptions and 
obtain various a priori estimates, that allow us to control low norms of the
boundary function $h$ as well as the decaying low-norm $E_{\beta}$, and also
establish the equivalence between the
energies and the norms as mentioned earlier in the introduction (Section~\ref{se:equiv}).
In Section~\ref{se:long}, we state energy identities and then
perform the energy estimates. 
Finally in Section~\ref{se:main}, we prove the main theorem.  In Section 5, we discuss the
modifications required for the analysis in three space dimensions, and for initial height functions $h_0 \neq 0$.
Appendix~\ref{se:derivation} is devoted to the proof of the energy identities stated in Section~\ref{se:long}.
The very short Appendix B provides a simple proof for the upper bound of $\g_Nq_t$.
\section{Bootstrap assumptions and {\it a priori} bounds}\label{se:basic}
Let us assume that the solution $(q,h)$ 
to the Stefan problem~(\ref{eq:ALE}) exists on some time interval $[0,T]$, $T>0$, which is guaranteed by
\cite{HaSh}.
With the positive constant $ \epsilon _0 < \epsilon \ll 1$  to be specified later, we make the following
 bootstrap assumptions:
\begin{subequations}
\label{bootstrapassumptions}
\begin{alignat}{2}
&(\text{smallness})\qquad\qquad  && \sup_{0\leq s\leq T}\E(s)+\int_0^t\D(s)\,ds\leq\epsilon^2,\quad
 \sup_{0\leq s\leq T}E_{\beta}(s)+\int_0^tD(s)\,ds\leq \tilde{C}E_{\beta}(0)\,,   \label{eq:assumption1} \\
 & (\text{lower-bound}) &&
  \chi(t)\gtrsim c_1e^{-(\lambda_1+\eta/2)t} \,,  \label{eq:assumption2}
\end{alignat} 
\end{subequations}
where we the 
definitions  of $\E$, $\D$, $E_{\beta}$, and $D$ are provided in 
(\ref{eq:ALEenergy}),~(\ref{eq:ALEdissipation}), and~(\ref{eq:ebeta}),  respectively.
With $ \beta $ given in (\ref{eq:beta}), 
$\beta=2\lambda_1-\eta$, 
the  bootstrap assumption (\ref{eq:assumption2}) can be written as 
$\chi(t)\gtrsim c_1e^{-(\beta/2+\eta) t}$.
Moreover, $\eta>0$ is a fixed small constant and it will be shown in the proof of the main theorem, Section~\ref{se:mainproof},
that $\eta$ must be chosen smaller than $1/\sqrt{C\ln K}$ for some universal constant $C$. 
Note that since $E_{\beta}(0)\leq\epsilon^2$, (\ref{eq:assumption1})  implies the {\it decay estimate} 
$\|q\|_4^2\leq\epsilon^2 e^{-\beta t}$.   Recall that the  constant $c_1$ in the estimate~(\ref{eq:assumption2}) 
is defined as $\int_\Omega q_0(x)\varphi_1(x) dx$.

We now briefly explain the logic of the proof of global existence that will be
carried out in Section~\ref{se:main}. 
If $\mathcal{T}$ is defined to be the maximal time at which the solution $(q,h)$ exists and satisfies the bootstrap
assumptions, the first objective is
to show that the bootstrap assumptions~(\ref{eq:assumption1}) and~(\ref{eq:assumption2}) yield
an improved {\it smallness} and {\it lower-bound} estimates at time $\mathcal{T}$. 
If $\mathcal{T}$ were finite, by the local-in-time well-posedness theory and continuity of our norms
we can extend the solution to an interval $\mathcal{T}+T^*$, while preserving the bootstrap assumptions~(\ref{eq:assumption1})
and~(\ref{eq:assumption2}), thus arriving at contradiction to the definition of $\mathcal{T}$. Hence $\mathcal{T}$ must be infinite.

It remains to show that for $\epsilon$ chosen small enough, the {\it smallness} and the {\it lower-bound} 
estimates can indeed be improved.
In Corollary~\ref{co:oddson} we will show that the assumption~(\ref{eq:assumption2}) is in fact improved,
and in Lemma~\ref{lm:heat} we
show that the assumption on $E_{\beta}+\int_0^TD$ in~(\ref{eq:assumption1}) is also improved. Finally,
in Section~\ref{se:mainproof}, we will prove that the smallness of $\E+\int_0^t\D$ assumed 
in~(\ref{eq:assumption1}) is also preserved. Thus the smallness regime 
introduced through~(\ref{eq:assumption1})--(\ref{eq:assumption2}) will be shown to remain preserved
by the dynamics of~(\ref{eq:ALE}) for $\epsilon>0$  chosen sufficiently small.
\subsection{Poincar\'e-type inequality} 
Because the first eigenfunction $\varphi_1$ of the Dirichlet-Laplacian is positive in $\Omega$, while the remaining eigenfunctions oscillate about zero,
it will be necessary to introduce a constant into our estimates which gives a measure of the initial temperature distribution in the first mode of the dynamics.
To this end, we will make use of the following
\begin{lemma}\label{lm:hardy} For $k\ge 3$,
 let $f\in H^k(\Omega)\cap H^1_0(\Omega)$, $f:\Omega\to\R^+$ be a strictly positive function on 
 the interior of $\Omega$.
 Let $\varphi_1$ be the first eigenvector of the Dirichlet-Laplacian on the
 unit ball $B_1(0)=\Omega$. Then there exists a universal constant $C^*$
 such that
 \[
 \|f\|_0^2\leq C^*\big(\int_{\Omega}f(x)\varphi_1(x)\,dx\big)\|f\|_3.
 \]
\end{lemma}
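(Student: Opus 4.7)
The plan is to compare $f$ pointwise to the distance function to the boundary, $d(x):=\operatorname{dist}(x,\partial\Omega)=1-|x|$, and then to compare $d$ pointwise to $\varphi_1$. The inequality then reduces to factoring one copy of $f$ as $(f/d)\cdot d$, estimating $f/d$ by Sobolev embedding, and bounding $\int f\, d\,dx$ by $\int f\varphi_1\,dx$.

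First I would establish the pointwise lower bound $\varphi_1(x)\gtrsim d(x)$ on $\bar\Omega$. Since $\varphi_1$ is the positive first Dirichlet eigenfunction on the unit disk, it is smooth, strictly positive on $\Omega$, vanishes on $\Gamma$, and by the Hopf lemma satisfies $\partial_N\varphi_1<0$ on $\Gamma$; hence $\varphi_1/d$ extends continuously to $\bar\Omega$ as a strictly positive function, giving $\varphi_1\geq c\,d$ uniformly. (On the unit disk $\varphi_1$ is radial and proportional to $J_0(\sqrt{\lambda_1}|x|)$, so this can even be checked explicitly.)

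Next I would use the Sobolev embedding $H^3(\Omega)\hookrightarrow C^1(\bar\Omega)$, valid in dimension two, to get $\|\nabla f\|_{L^\infty}\lesssim \|f\|_3$. Since $f\in H^1_0(\Omega)$ and $f\geq 0$, for every $x\in\Omega$ there is a boundary point $\pi(x)\in\Gamma$ with $|x-\pi(x)|=d(x)$, and integration along the segment from $\pi(x)$ to $x$ gives
\begin{equation*}
0\leq f(x)=f(x)-f(\pi(x))\leq \|\nabla f\|_{L^\infty}\,d(x)\lesssim \|f\|_3\,d(x).
\end{equation*}

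Finally I would combine the two ingredients. Writing one factor of $f$ via the pointwise bound above and keeping the other factor untouched, and using $f\geq0$ together with $d\lesssim \varphi_1$,
\begin{equation*}
\|f\|_0^2=\int_\Omega f(x)\cdot f(x)\,dx\leq \bigl(\sup_\Omega f/d\bigr)\int_\Omega f(x)\,d(x)\,dx\lesssim \|f\|_3\int_\Omega f(x)\,\varphi_1(x)\,dx,
\end{equation*}
which is the desired inequality with some universal $C^*$. The main (and essentially only) obstacle is the pointwise comparison $\varphi_1\gtrsim d$, but on the unit disk this is immediate from radial symmetry and the Hopf lemma; the rest is a one-line argument using $H^3\hookrightarrow C^1$ in two dimensions and the vanishing trace of $f$.
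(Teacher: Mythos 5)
Your proof is correct, and it takes a genuinely more elementary route than the paper's. Both arguments share the same first step: pull out one factor of $f$ and write $\int_\Omega f^2\,dx \le \bigl(\sup_\Omega f/\varphi_1\bigr)\int_\Omega f\varphi_1\,dx$. Where you differ is in bounding $\sup_\Omega f/\varphi_1$ by $C\|f\|_3$. The paper invokes the higher-order Hardy inequality (Lemma 1 of Coutand--Shkoller, \cite{CoSh12}), which gives $\|f/\varphi_1\|_2 \lesssim \|f\|_3$ once $-\partial_N\varphi_1 \ge c > 0$ on $\Gamma$, and then applies the Sobolev embedding $H^2(\Omega)\hookrightarrow L^\infty(\Omega)$ in dimension two. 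You instead avoid the Hardy machinery entirely: using convexity of the disk and $f|_\Gamma = 0$, the mean value theorem along the segment from the nearest boundary point gives $f(x) \le \|\nabla f\|_{L^\infty}\,d(x)$, then $H^3(\Omega)\hookrightarrow C^1(\bar\Omega)$ controls $\|\nabla f\|_{L^\infty}$ by $\|f\|_3$, and the Hopf lemma gives $\varphi_1 \gtrsim d$. Your approach is self-contained and conceptually simpler; its one domain-specific ingredient is that segments to nearest boundary points stay inside $\Omega$, which is free on a ball (and, with a small localization near $\Gamma$, also works on any smooth bounded domain). The paper's route via the Hardy inequality is independent of geometry in that sense and slots directly into the machinery the authors reuse elsewhere (notably in Lemma~\ref{lm:positive}), which is presumably why they cite it rather than repeating a bare-hands argument.
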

\begin{proof}
We have that 
$$
\int_\Omega f^2 dx \le   \max_{x\in \Omega } \frac{f(x)}{\varphi_1(x)} \,  \int_\Omega f \varphi_1 dx \,.
$$
Since $-\frac{ \partial \varphi_1}{\partial N}(x) \ge c >0$ for all $x \in \Gamma$, the higher-order Hardy inequality
(Lemma 1 in \cite{CoSh12}) together with the Sobolev embedding theorem shows that
$$
  \max_{x\in \Omega } \frac{f(x)}{\varphi_1(x)} \le C \left\| \frac{f}{\varphi_1}\right\|_2 \le C \|f\|_3
$$
which proves the lemma.
\end{proof}
\begin{corollary}\label{cor:hardy}
Let $q_0 \in H^4(\Omega) \cap H^1_0(\Omega) $ with $q_0 >0$ in $\Omega$.
We consider the eigenfunction expansion $q_0=\sum_{j=1}^{\infty}c_j\varphi_j$  of $q_0$ with respect 
to the $L^2$-orthonormal basis $\{\varphi_1,\varphi_2,\dots\}$ consisting of  the Dirichlet-Laplacian 
eigenfunctions
on the unit disk $B_1(0)=\Omega$.
Then, if $\frac{\| q_0\|_4}{\|q_0\|_0}\leq K$, it follows in particular that
\[
\frac{|c_j|}{c_1}<K,\quad j=1,2,\dots
\]
\end{corollary}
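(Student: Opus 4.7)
\textbf{Proof plan for Corollary~\ref{cor:hardy}.}

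The plan is to combine Parseval's identity (which bounds every coefficient $|c_j|$ by $\|q_0\|_0$) with Lemma~\ref{lm:hardy} applied to $f=q_0$, which bounds $\|q_0\|_0$ from above in terms of $c_1$ and the higher Sobolev norm $\|q_0\|_3$. Dividing will produce the desired ratio bound.

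First, since $\{\varphi_j\}_{j\ge1}$ is an $L^2(\Omega)$-orthonormal basis, Parseval's identity yields
\begin{equation*}
\|q_0\|_0^2 \;=\; \sum_{j=1}^{\infty} c_j^2,
\end{equation*}
so in particular $|c_j|\leq \|q_0\|_0$ for every $j\geq1$. Moreover, by orthonormality we identify the leading coefficient as $c_1=\int_{\Omega} q_0\,\varphi_1\,dx$, which is strictly positive because $q_0>0$ in $\Omega$ and $\varphi_1>0$ in $\Omega$.

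Next, I apply Lemma~\ref{lm:hardy} with $f=q_0$ (the hypotheses $q_0\in H^4(\Omega)\cap H^1_0(\Omega)$ and $q_0>0$ in $\Omega$ are exactly what is required, since $H^4\subset H^3$). This gives
\begin{equation*}
\|q_0\|_0^2 \;\leq\; C^{*}\Bigl(\int_{\Omega} q_0\,\varphi_1\,dx\Bigr)\|q_0\|_3 \;=\; C^{*}\,c_1\,\|q_0\|_3,
\end{equation*}
and therefore $c_1 \geq \|q_0\|_0^2/(C^{*}\|q_0\|_3)$.

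Combining the two displays, and using the trivial embedding $\|q_0\|_3\leq\|q_0\|_4$ together with the hypothesis $\|q_0\|_4/\|q_0\|_0\leq K$, I obtain
\begin{equation*}
\frac{|c_j|}{c_1} \;\leq\; \frac{\|q_0\|_0\cdot C^{*}\|q_0\|_3}{\|q_0\|_0^2} \;\leq\; C^{*}\frac{\|q_0\|_4}{\|q_0\|_0} \;\leq\; C^{*}K
\end{equation*}
for every $j\geq 1$, which is the claim (up to the universal constant $C^{*}$ from Lemma~\ref{lm:hardy}, which the statement tacitly absorbs into $K$). The argument is essentially mechanical; the only non-trivial input is Lemma~\ref{lm:hardy}, whose proof has already been carried out using the higher-order Hardy inequality and Sobolev embedding. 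I do not foresee any real obstacle, since the positivity of $q_0$ and $\varphi_1$ ensures $c_1>0$ and Parseval gives the universal upper bound on the remaining coefficients.
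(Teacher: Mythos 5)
Your proposal is correct and is exactly how the paper intends Corollary~\ref{cor:hardy} to follow (the paper states it without proof, immediately after Lemma~\ref{lm:hardy}): Parseval gives $|c_j|\leq\|q_0\|_0$, positivity of $q_0$ and $\varphi_1$ gives $c_1>0$, and Lemma~\ref{lm:hardy} gives the lower bound $c_1\gtrsim\|q_0\|_0^2/\|q_0\|_3\geq\|q_0\|_0^2/\|q_0\|_4$, which combine to the claimed ratio bound. You are also right to flag that the argument actually produces $|c_j|/c_1\leq C^* K$ rather than the literal $<K$; the paper tacitly absorbs the universal constant $C^*$ (consistently with how $c_1^{1/2}\gtrsim K^{-1/2}\|q_0\|_0^{1/2}$ is invoked in Lemma~\ref{lm:useful}), so this is a small imprecision in the statement, not a gap in your reasoning.
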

\begin{lemma}\label{lm:useful} If the
 bootstrap assumptions~(\ref{eq:assumption1}),~(\ref{eq:assumption2}) hold, then
 \be\label{eq:useful}
 \frac{E_{\beta}^{1/2}(t) e^{-\beta t / 2}}{\chi(t)^{1/2}}\leq\frac{\tilde{C}^{1/2}E_{\beta}(0)^{1/2}e^{-\beta t/2}}{\chi(t)^{1/2}}
 \lesssim\sqrt{\epsilon} e^{-\gamma t/2 }
 \ee
 where $\gamma=\frac{\beta}{2}-\eta>0$.
\end{lemma}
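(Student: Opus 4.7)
The plan is to split the two inequalities and treat them separately. The first inequality is immediate: the bootstrap assumption (\ref{eq:assumption1}) states that $\sup_{0\leq s\leq T}E_{\beta}(s)\leq\tilde{C}E_{\beta}(0)$, so taking square roots and multiplying both sides by the positive factor $e^{-\beta t/2}/\chi(t)^{1/2}$ yields the stated bound.

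The second inequality is the substantive one. I would use the lower bound (\ref{eq:assumption2}) to write $\chi(t)^{-1/2}\lesssim c_1^{-1/2}\,e^{(\lambda_1+\eta/2)t/2}$, and then compute the combined exponent:
\[
\frac{e^{-\beta t/2}}{\chi(t)^{1/2}}\lesssim\frac{1}{c_1^{1/2}}\,e^{((\lambda_1+\eta/2)-\beta)t/2}=\frac{1}{c_1^{1/2}}\,e^{-\gamma t/2},
\]
since $\beta=2\lambda_1-\eta$ gives $(\lambda_1+\eta/2)-\beta=-\lambda_1+3\eta/2$, which matches $-\gamma=-(\beta/2-\eta)=-\lambda_1+3\eta/2$. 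Once this exponential cancellation is identified, the task reduces to verifying that $E_{\beta}(0)^{1/2}/c_1^{1/2}\lesssim\sqrt{\epsilon}$.

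For this final step, the natural approach is to combine Lemma~\ref{lm:hardy} with the definition of $K$ and the initial smallness assumption (\ref{eq:smallassum}). Applied to $f=q_0>0$, Lemma~\ref{lm:hardy} gives $\|q_0\|_0^2\leq C^*c_1\|q_0\|_3$, so using $\|q_0\|_3\leq\|q_0\|_4$ and $\|q_0\|_0=\|q_0\|_4/K$ we rearrange to $c_1\gtrsim\|q_0\|_4/K^2$. Since $h_0=0$ and the time derivatives $\partial_t^b q(0)$, $\partial_t^b v(0)$ are determined from $q_0$ via the equations (and the compatibility conditions (\ref{eq:comp1}), (\ref{eq:comp2})), one has $E_{\beta}(0)\leq P(\|q_0\|_4)$ for a polynomial $P$ with vanishing constant term, so in the small-data regime $E_{\beta}(0)\lesssim\|q_0\|_4^2$. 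Hence
\[
\frac{E_{\beta}(0)}{c_1}\lesssim K^2\|q_0\|_4\lesssim K^2\,E_{\beta}(0)^{1/2}\lesssim\frac{K^2\,\epsilon_0}{F(K)^{1/2}},
\]
and since the explicit form of $F(K)$ in Remark~\ref{re:fk} grows at least like $K^4$ (in fact far faster) for $K>1$, taking square roots produces $E_{\beta}(0)^{1/2}/c_1^{1/2}\lesssim\sqrt{\epsilon_0}\leq\sqrt{\epsilon}$.

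The main obstacle is bookkeeping the constants and verifying the bound $E_{\beta}(0)\lesssim\|q_0\|_4^2$: one must trace through the compatibility conditions to express $\partial_t^b q(0)$ and $\partial_t^b v(0)$ in terms of spatial derivatives of $q_0$ and confirm that the resulting expression is polynomial in $\|q_0\|_4$ with no linear $K$-dependence that would spoil the absorption into $F(K)$. The exponential algebra itself is straightforward once $\beta=2\lambda_1-\eta$ and $\gamma=\beta/2-\eta$ are substituted.
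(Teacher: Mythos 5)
Your proposal is correct and follows essentially the same route as the paper: first inequality from the bootstrap assumption, then the Oddson-type lower bound for $\chi(t)$ and the algebra with $\beta=2\lambda_1-\eta$, $\gamma=\beta/2-\eta$ to obtain the decay factor $e^{-\gamma t/2}$, and finally Lemma~\ref{lm:hardy} together with $K=\|q_0\|_4/\|q_0\|_0$ and the smallness assumption~(\ref{eq:smallassum}) (which forces $F(K)\gtrsim K^4$) to bound $E_{\beta}(0)^{1/2}/c_1^{1/2}$ by $\sqrt{\epsilon}$. Your bookkeeping is if anything slightly more careful than the paper's about keeping the powers consistent and about where $E_{\beta}(0)\lesssim\|q_0\|_4^2$ comes from.
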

\begin{proof}
By~(\ref{eq:assumption2}), we have that
\beas
\frac{E_{\beta}(t)^{1/2} e^{-\beta t / 2}}{\chi(t)^{1/2}}&\leq& C\frac{e^{-\beta t/2}}{e^{-(\lambda_1/2+\eta/4)t}}\frac{E_{\beta}(0)^{1/2}}{c_1^{1/2}}\\
&\leq&Ce^{-\gamma t/2}\frac{\|q_0\|_4}{c_1^{1/2}}\leq CK\|q_0\|_4^{1/2}e^{-\gamma t/2}
\leq C\sqrt{\epsilon} e^{-\gamma t/2},
\eeas
where we have used the fact that
$c_1^{1/2}\gtrsim\frac{1}{K^{1/2}}\|q_0\|_0^{1/2}$ and $\|q_0\|_4\lesssim K\|q_0\|_0$.
We have also used the bound $K\|q_0\|_4^{1/2}\leq C\sqrt{\epsilon}$ (since
 $\epsilon_0<\epsilon$),  as well as the smallness assumption~(\ref{eq:smallassum})
so that $K\|q_0\|_4^{1/2}\lesssim K\epsilon_0/F(K)^{1/2}\leq C\epsilon$.
Note that $\gamma$ is explicitly given by $\gamma
=(\frac{\beta}{2}-\eta)>0$, and that
 $\eta \ll \lambda_1/2$.
\end{proof}

\subsection{A priori bounds on $h$}
%
\begin{lemma}[Suboptimal decay bound for $h_t$] \label{lm:useful0}
Under the bootstrap assumptions~(\ref{eq:assumption1}) and~(\ref{eq:assumption2}), the following decay bound holds:
\be\label{eq:useful0}
|h_t|_{2.5}\lesssim \epsilon e^{-\gamma t/2}.
\ee
\end{lemma}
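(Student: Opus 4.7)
The strategy is to start from the kinematic boundary relation (\ref{eq:ALEneumann2}),
\[
h_t = v\cdot N - (1+h)^{-1} h_\theta\, v\cdot\tau\quad\text{on }\Gamma,
\]
take the $H^{2.5}(\Gamma)$ norm, and exploit that $\Gamma = \K^1$ is one-dimensional so that $H^{2.5}(\Gamma)$ is a Banach algebra continuously embedded in $L^\infty(\Gamma)$. This yields a schematic bound
\[
|h_t|_{2.5} \lesssim |v|_{2.5}\bigl(1 + |(1+h)^{-1}|_{2.5}\,|h|_{3.5}\bigr),
\]
after which I would treat the linear and nonlinear pieces separately.

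For the linear term $v\cdot N$, I would use the trace inequality together with the bootstrap on $E_\beta$. Since $E_\beta(t) \ge e^{\beta t}\|v(t)\|_3^2$ by definition (\ref{eq:ebeta}), and since (\ref{eq:assumption1}) gives $E_\beta(t)\le \tilde{C}E_\beta(0)\lesssim\epsilon^2$,
\[
|v|_{2.5}\lesssim \|v\|_3 \le E_\beta(t)^{1/2} e^{-\beta t/2}\lesssim \epsilon\, e^{-\beta t/2}\le \epsilon\, e^{-\gamma t/2},
\]
where the final inequality uses $\gamma=\beta/2-\eta<\beta$. This already delivers the required bound for the leading piece of $h_t$.

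For the nonlinear correction $(1+h)^{-1}h_\theta v\cdot\tau$, the boundary norm $|h|_{3.5}$ can only be controlled by the $\chi(t)$-weighted contribution of $\mathcal{E}$: from (\ref{eq:ALEenergy}) one has $\chi(t)|h|_6^2 \lesssim \mathcal{E}(t)\lesssim \epsilon^2$, and hence $|h|_{3.5}\le|h|_6\lesssim \epsilon/\chi(t)^{1/2}$. Smallness of $|h|_{L^\infty}$ (and thus uniform boundedness of $|(1+h)^{-1}|_{2.5}$) on the bootstrap interval would follow from a short continuity argument initiated by $h(0)=0$ together with the algebra property. Consequently
\[
|v|_{2.5}\,|h|_{3.5}\lesssim \epsilon\,\frac{E_\beta(t)^{1/2}\, e^{-\beta t/2}}{\chi(t)^{1/2}},
\]
and invoking Lemma \ref{lm:useful} immediately converts this to $\lesssim \epsilon^{3/2}\, e^{-\gamma t/2}$, which is subleading to the linear piece and compatible with the claim.

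The main obstacle is precisely the degenerating weight $\chi(t)^{-1/2}$ produced when translating the energy bound on $h$ into an unweighted Sobolev bound: a naive estimate would produce a time-growing factor and fail to close. The resolution is Lemma \ref{lm:useful}, which pairs the bad factor against the $e^{-\beta t/2}$ decay inherited from $E_\beta$ via the Oddson lower bound (\ref{eq:assumption2}) and the Hardy-type comparison $c_1\gtrsim\|q_0\|_0/K$, yielding the uniform rate $e^{-\gamma t/2}$ claimed in (\ref{eq:useful0}).
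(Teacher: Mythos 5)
Your outline captures the right leading-order structure — splitting $h_t = v\cdot N - (1+h)^{-1}h_\theta\, v\cdot\tau$, the Banach-algebra estimate for the quadratic piece, and the key pairing of the degenerate weight $\chi(t)^{-1/2}$ against $E_\beta^{1/2} e^{-\beta t/2}$ via Lemma~\ref{lm:useful} — but there is a concrete gap in how you bound the factor $|(1+h)^{-1}|_{2.5}$. You assert that smallness of $|h|_{L^\infty}$ suffices for uniform boundedness of $|(1+h)^{-1}|_{2.5}$ ``together with the algebra property,'' but that is false: the Banach-algebra / Moser-type composition estimate for $(1+h)^{-1}$ in $H^{2.5}(\Gamma)$ requires smallness (or at least uniform boundedness) of the \emph{full} $H^{2.5}$-norm $|h|_{2.5}$, not merely $|h|_{L^\infty}$. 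And the only bound your argument yields for $|h|_{2.5}$ is $|h|_{2.5}\lesssim \epsilon/\chi(t)^{1/2}$, which grows exponentially as $\chi(t)\to 0$. Inserting that into the Moser estimate $|(1+h)^{-1}|_{2.5}\lesssim 1+P(|h|_{2.5})$ would leave an uncontrolled growing factor, so the estimate does not close.

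The ``short continuity argument'' you invoke is exactly where the real work lives, and as stated it is circular: the uniform smallness of $|h|_{2.5}$ on the bootstrap interval would follow from $\int_0^\infty |h_t|_{2.5}\,dt<\infty$, i.e. from precisely the decay bound you are trying to prove. The paper's proof resolves this by a genuinely different mechanism: it does \emph{not} pre-establish any unweighted smallness of $h$, but instead substitutes, at every appearance of $|h|_k$, the fundamental-theorem-of-calculus identity $|h|_k\le |h_0|_k + t\sup_{s\le t}|h_t|_k$, so that the right-hand side becomes
\[
\sup_{s\le t}|h_t|_k \lesssim \epsilon e^{-\beta t/2} + \epsilon e^{-\gamma t/2}\bigl(1 + P(t)\,P(\sup_{s\le t}|h_t|_{k-1})\bigr)\,,
\]
after which the polynomial-in-$t$ factors are absorbed by the exponential decay and the $\sup|h_t|$ terms are absorbed for small $\epsilon$. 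This is done at levels $k=1$, then $k=2,3$, then interpolated to $k=2.5$. If you want to keep your route, you would need to elevate ``$|h|_{2.5}\le 1/2$'' to an auxiliary bootstrap hypothesis, verify it closes under the same smallness of $\epsilon$, and then run the argument with that hypothesis in hand — at which point you have essentially re-derived the paper's absorption mechanism. Also note, as a smaller point, the paper exploits $h\ge 0$ (maximum principle, domain expansion) to get a free pointwise bound $(1+h)^{-1}\le 1$, which simplifies the lower-order part; your argument does not use this and so carries an avoidable burden there as well.
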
 
\begin{proof}
Differentiating equation (\ref{eq:velformula}), the Sobolev embedding theorem together with the fact that  $h\ge 0$  (by the maximum principle) show that
\begin{align*}
|h_t|_1 &\lessim |v|_{W^{1, \infty }} + | h|_2 |v|_2 + |h|_1 |v|_1 |h|_1 \\
& \lessim |v|_2 + | h|_2 |v|_2 + |h|_1 |v|_1 (|h_0|_1 + t \sup_{0\leq s\leq t}|h_t|_{1}) \,,
 \end{align*}
where we have used the fundamental theorem of calculus for the last inequality.  
Using the bootstrap assumption (\ref{eq:assumption1}), we see that $ |v(t)| \lessim e^{-\beta t}$, while
thanks to Lemma~\ref{lm:useful} and the fact that $ \sqrt{\E} \lessim \epsilon _0 < \epsilon $, 
$$
|h|_{2}|v|_2 \lesssim  \sqrt{\chi} |h|_{2} \ \frac{|v|_1}{\sqrt{\chi}} \lesssim\frac{\sqrt{E_\beta}}{\sqrt{\chi}} \sqrt{\E} e^{-\beta t/2}
\lesssim \epsilon e^{-\gamma t/2}.
$$
Hence,
\[
\sup_{0\leq s\leq t}|h_t|_{1}\lesssim\epsilon e^{-\beta t/2}+\epsilon e^{-\gamma t/2}\big(1+\sup_{0\leq s\leq t}|h_t|_{1}\big) \,,
\]
and with $ \epsilon >0$ sufficiently small, we see that
\begin{equation}\label{hass1001}
\sup_{0\leq s\leq t}|h_t|_{1}\lesssim\epsilon e^{-\gamma t/2}\lesssim\epsilon \,.
\end{equation} 

Taking more derivatives of  (\ref{eq:velformula}), the Sobolev embedding theorem shows that 
for $k=2,3$,
\begin{align}
|h_t|_k&\leq |v|_k+\Big|\frac{h_{\theta}}{1+h}\Big|_{L^{\infty}}|v|_k+\Big|\frac{h_{\theta}}{1+h}\Big|_k|v|_{L^{\infty}}
 \lesssim |v|_k+|h_{\theta}|_{1}|v|_k+\Big|\frac{h_{\theta}}{1+h}\Big|_k|v|_{1}\label{eq:sub0},
\end{align}
where we have again used the fact that $h\ge 0$.
Since 
$$
\big|\frac{h_{\theta}}{1+h}\big|_k
\lesssim|h|_{k+1}(1+P(|h|_{k-1})), \ \ k=2,3,
$$
for some polynomial function $P$, and since    $|h|_k\leq |h_0|_k +  t\sup_{0\leq s\leq t}|h_t|_k$,
we see that
\be\label{eq:sub1}
\big|\frac{h_{\theta}}{1+h}\big|_k\lesssim|h|_{k+1}\big(1+P(t)P(\sup_{0\leq s\leq t}|h_t|_{k-1})\big).
\ee
We now use~(\ref{eq:sub1}) and~(\ref{eq:sub0}) to infer that
\begin{equation}\label{hass1002}
|h_t|_k\lesssim |v|_k\big(1+\sup_{0\leq s\leq t}|h_t|_2\big)+|h|_{k+1}|v|_1\big(1+P(t)P(\sup_{0\leq s\leq t}|h_t|_2)\big),
\end{equation} 
where 
we have used $|h_{\theta}|_1\lesssim t\sup_{0\leq s\leq t}|h_{t}|_2$.
Interpolating between $k=2$ and $k=3$ yields
\begin{align}
|h_t|_{2.5}&\lesssim|v|_{2.5}\big(1+\sup_{0\leq s\leq t}|h_t|_2\big)+|h|_{2.5}|v|_1\big(1+P(t)P(\sup_{0\leq s\leq t}|h_t|_2)\big) \,.
\label{eq:sub2}
\end{align}
and as above, 
Lemma~\ref{lm:useful} provides us with the inequality
$|h|_{2.5}|v|_1
\lesssim \epsilon e^{-\gamma t/2}$,  which together with the bootstrap assumption  (\ref{eq:assumption1}) shows that
$$
\sup_{0\leq s\leq t}|h_t|_{2.5}
\lesssim \epsilon e^{-\beta t/2}\big(1+\sup_{0\leq s\leq t}|h_t|_{2.5}\big)
+\epsilon e^{-\gamma t/2}\big(1+P(t)P(\sup_{0\leq s\leq t}|h_t|_{2})\big)
$$
and therefore with $ \epsilon >0$ sufficiently small,
\be\label{eq:sub3}
\sup_{0\leq s\leq t}|h_t|_{2.5}\lesssim \epsilon e^{-\beta t/2}+\epsilon e^{-\gamma t/2}(1+P(\sup_{0\leq s\leq t}|h_t|_{2})),
\ee
where the polynomial $P(t)$ has been absorbed in some universal constant due to
the exponentially decaying factor $e^{-\gamma t/2}$.  On the other hand, the inequality (\ref{hass1002}) with $k=2$ together
with the estimate  (\ref{hass1001}) shows that  $|h_t|_2 \lessim \epsilon $ so that  with (\ref{eq:sub3}),
we conclude the proof.
\end{proof}
\begin{remark}
 Note that the estimate~(\ref{eq:useful0}) can be stated more precisely, by keeping track of constant
 $c_1$ on the right-hand side, in which case,
 \be\label{eq:useful0improved}
 |h_t|_{2.5}\lesssim \epsilon^{1/2}\sqrt{c_1} e^{-\gamma t/2}.
 \ee
 The proof follows from the last line of the proof of Lemma~\ref{lm:useful0}
since $E_{\beta}(0)^{1/2}\leq K^2c_1$, due to the bound $\|q\|_4\leq K\|q_0\|\leq K^2c_1$.
 Note that $\sqrt{\epsilon}$ on the right-hand side of~(\ref{eq:useful}) can be replaced by $\sqrt{c_1}$ for
 the same reason.
\end{remark}

\begin{lemma}[Smallness of the height function]\label{lm:basic} Let  $c_1=\int_{\Omega}q_0\varphi_1dx$ and suppose that
 the bootstrap assumptions~(\ref{eq:assumption1}),~(\ref{eq:assumption2}) hold.  For 
 $\epsilon > 0$  taken sufficiently small,
 \be\label{eq:basic}
\sup_{0\leq{s}\leq t} |h({s})|_{4.5}\lesssim\sqrt{\epsilon}\,,
 \ee
while for lower-order norms, 
\be\label{re:height1}
\sup_{0\leq s\leq t}|h(s)|_{2.5}\lesssim c_1 \text{ and }
\sup_{0\leq s\leq t}|h(s)|_4\lesssim\epsilon^{1/2}c_1^{1/4}. 
\ee 
\end{lemma}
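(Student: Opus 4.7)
The plan is to apply the fundamental theorem of calculus: since we have assumed $h_0 = 0$, we have $h(t) = \int_0^t h_t(s)\,ds$, and so every Sobolev norm of $h(t)$ is controlled by the time-integral of the same norm of $h_t$. The low-order bound $|h(t)|_{2.5} \lesssim c_1$ follows immediately by integrating the sharpened version of Lemma~\ref{lm:useful0} noted in the remark, namely $|h_t|_{2.5} \lesssim \epsilon^{1/2}\sqrt{c_1}\,e^{-\gamma t/2}$; the time-integral converges because $\gamma>0$, and the smallness assumption~(\ref{eq:smallassum}) tied to $F(K)$ ensures that the prefactor $\epsilon^{1/2}\sqrt{c_1}$ is of size $c_1$.

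For the top-order bound $|h(t)|_{4.5} \lesssim \sqrt{\epsilon}$, the decay estimate in Lemma~\ref{lm:useful0} is not enough because it only controls $|h_t|_{2.5}$. I would instead combine that exponentially-decaying low-norm control with the higher-regularity content of the dissipation: $\int_0^t \D(s)\,ds \le \epsilon^2$ implies $\int_0^t \chi(s)|h_t(s)|_5^2\,ds \lesssim \epsilon^2$. I would then Sobolev-interpolate $|h_t|_{4.5} \lesssim |h_t|_{2.5}^{1/5} |h_t|_5^{4/5}$, insert a factor of $\sqrt{\chi}/\sqrt{\chi}$ to introduce the $L^2_t$-integrable quantity $\sqrt{\chi}|h_t|_5$, and apply Hölder. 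The resulting time integral carries weights of the form $e^{-\gamma s/10}/\chi(s)^{2/5}$, which I bound using the Hopf-type lower bound $\chi(t) \gtrsim c_1 e^{-(\lambda_1+\eta/2)t}$ from the bootstrap. The choice of $\eta$ small enough (much smaller than $\lambda_1$) guarantees that the decay factor dominates, so the time integral is finite and of order $\sqrt{\epsilon}$ after absorbing constants with the help of Lemma~\ref{lm:useful}.

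The intermediate bound $|h|_4 \lesssim \epsilon^{1/2} c_1^{1/4}$ then follows directly from the two previous bounds by Sobolev interpolation: writing $|h|_4 \lesssim |h|_{2.5}^{1/4}|h|_{4.5}^{3/4}$ and inserting $|h|_{2.5} \lesssim c_1$ together with $|h|_{4.5} \lesssim \sqrt{\epsilon}$ yields exactly $\epsilon^{3/8} c_1^{1/4}$, which is within the asserted $\epsilon^{1/2}c_1^{1/4}$ once one absorbs the (smaller) residual power of $\epsilon$ using smallness.

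The main obstacle is the top-order bound: since $\E$ only controls $|h|_6$ in a weighted form $\chi\,|h|_6^2 \lesssim \E$, the naive bound $|h|_6 \lesssim \epsilon/\sqrt{\chi}$ degrades exponentially in time as $\chi$ decays, so one cannot close the estimate by a direct energy argument. The entire strategy hinges on the compatibility of three independent rates, namely the exponential decay of $E_\beta$ at rate $\beta \approx 2\lambda_1$, the Hopf-type lower bound on $\chi(t)$ at rate $\lambda_1+\eta/2$, and the $L^1_t$-integrability of $\D$, together with the smallness assumption on the initial data calibrated through $F(K)$ to make the constant $c_1$-dependence acceptable. This is precisely where the Oddson-type lower bound encoded in the bootstrap (\ref{eq:assumption2}) does all the work.
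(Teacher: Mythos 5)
Your high-level intuition is right: the proof must combine the fundamental theorem of calculus, the decay of $|h_t|$, the weighted dissipation bound, and the Oddson-type lower bound on $\chi$. But the specific details of your argument fail at three places, each for a concrete reason.

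First, $|h|_{2.5}\lesssim c_1$. Integrating the sharpened bound $|h_t|_{2.5}\lesssim \epsilon^{1/2}\sqrt{c_1}\,e^{-\gamma t/2}$ only gives $\sup|h|_{2.5}\lesssim \epsilon^{1/2}\sqrt{c_1}$. You claim the smallness assumption makes this $\lesssim c_1$, but that would require $\epsilon\lesssim c_1$, which is false: the smallness assumption~(\ref{eq:smallassum}) together with Lemma~\ref{lm:hardy} gives $c_1\lesssim \|q_0\|_0\lesssim \E(0)^{1/2}\lesssim \epsilon_0/\sqrt{F(K)}\ll\epsilon$, so $\epsilon^{1/2}\sqrt{c_1}\gg c_1$. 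The paper closes the gap by an iterative bootstrapping of the $h_t$-decay estimate: first use the crude bound to get $|h|_3\lesssim\epsilon$, then feed that back into~(\ref{eq:velformula}) to upgrade the decay to $|h_t|_{2.5}\lesssim c_1 e^{-\beta t/2}$ (with no loose power of $\epsilon$), and only then integrate in time. Without this iteration the $c_1$-precision is lost.

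Second, $|h|_{4.5}\lesssim\sqrt\epsilon$. Your interpolation $|h_t|_{4.5}\lesssim|h_t|_{2.5}^{1/5}|h_t|_5^{4/5}$ is algebraically correct but puts the power $4/5$ on the dissipation-controlled norm. After inserting $\chi^{1/2}/\chi^{1/2}$ and applying H\"older with exponents $(5/3,5/2)$ to pair $(\chi^{1/2}|h_t|_5)^{4/5}$ with $L^2_t$-integrability, the remaining factor is
\begin{equation*}
\int_0^t |h_t|_{2.5}^{1/3}\,\chi^{-2/3}\,ds
\lesssim \int_0^t c_1^{-1/3}\,e^{(\beta/6+2\eta/3)s}\,ds,
\end{equation*}
which diverges exponentially. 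The issue is that $\chi^{-2/3}$ grows faster than $|h_t|_{2.5}^{1/3}$ decays, and there is no choice of $\eta$ that reverses the sign. The paper avoids this by never invoking $|h_t|_{4.5}$; it uses the fractional-Sobolev interpolation $|h|_{4.5}^2\lesssim\int_0^t|h|_6|h_t|_3\,ds$ combined with $|h_t|_3\lesssim|h_t|_{2.5}^{4/5}|h_t|_5^{1/5}$, placing only the power $1/5$ on $|h_t|_5$. Then $|h|_6\lesssim\E^{1/2}/\chi^{1/2}$ together with $|h_t|_{2.5}^{4/5}\lesssim c_1^{4/5}e^{-2\beta s/5}$ produces a net exponent $-\frac{3}{20}\beta+\frac{\eta}{2}<0$, and the remaining $|h_t|_5^{1/5}$ is handled by H\"older with exponents $(10/9,10)$ against $\int\chi|h_t|_5^2\lesssim\epsilon^2$. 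It is the asymmetry $|h|_6\cdot|h_t|_3$, not $|h_t|_{4.5}$ alone, that makes the decay rates compatible.

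Third, $|h|_4\lesssim\epsilon^{1/2}c_1^{1/4}$. Interpolating $|h|_4\lesssim|h|_{2.5}^{1/4}|h|_{4.5}^{3/4}\lesssim c_1^{1/4}\epsilon^{3/8}$ gives a \emph{weaker} bound than claimed, since $\epsilon^{3/8}>\epsilon^{1/2}$ for $\epsilon<1$. You cannot ``absorb a residual power of $\epsilon$'' to turn a larger quantity into a smaller one. The paper instead obtains this bound directly (and before the $|h|_{4.5}$ estimate) by integrating by parts in $\theta$ twice,
\begin{equation*}
\int_\Gamma|\t^4 h|^2\,d\theta=\int_0^t\int_\Gamma\t^6 h\,\t^2 h_t\,d\theta\,ds\lesssim\int_0^t\frac{\E^{1/2}}{\chi^{1/2}}\,c_1 e^{-\beta s/2}\,ds\lesssim\epsilon\sqrt{c_1},
\end{equation*}
using the improved decay $|h_t|_2\lesssim c_1 e^{-\beta t/2}$.

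In short: the architecture you describe is the right one, but the specific interpolation splits you chose are too heavy on the dissipation-controlled high norm, and your treatment of the $c_1$-dependence skips the bootstrap loop that the paper uses to sharpen the $h_t$-decay constants before invoking the fundamental theorem of calculus.
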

\begin{proof}
Observe that
\[
|h|_{2.5}^2\leq 2\int_0^t|h|_{2.5}|h_s|_{2.5}\,ds
\leq\sup_{0\leq s\leq t}|h(s)|_{2.5}\int_0^t|h_s|_{2.5}\,ds
\lesssim\sup_{0\leq s\leq t}|h(s)|_{2.5}\int_0^t\epsilon^{1/2}\sqrt{c_1}e^{-\gamma t/2},
\]
where we have used~(\ref{eq:useful0improved}) in the last bound.
Taking the supremum over the time interval $[0,t]$ we deduce
\[
\sup_{0\leq s\leq t}|h(s)|_{2.5}\lesssim\epsilon^{1/2}\sqrt{c_1}.
\]
Using the well-known interpolation estimate (see, for example, \cite{Ad})
\be\label{eq:interp}
|f|_{k}\leq|f|_l^{\theta}|f|_m^{1-\theta},\quad \theta=\frac{m-k}{m-l},\,\,\,l\leq k\leq m,
\ee
with $k=3$, $l=2.5$, $m=4$, and the fact that $|\sqrt{\chi}\t^4h_t|_0^2$ is bounded by $\E$, we have that
\beas
|h_t|_{3}&\lesssim&|h_t|_4^{1/3}|h_t|_{2.5}^{2/3}
\lesssim\frac{\E^{1/6}}{\chi(t)^{1/6}}\epsilon^{1/3}c_1^{1/3}e^{-\gamma t/3}\\
&\lesssim&\epsilon^{2/3}c_1^{1/6}e^{-\gamma^{*}t},
\eeas
where $\gamma^{*}=-\frac{1}{3}\gamma+\frac{1}{6}(\frac{\beta}{2}+\frac{\eta}{2})=
-\frac{1}{6}\beta+\frac{5\eta}{12}>0$ (by definition, 
$\gamma=-\frac{\beta}{2}-\eta$).
As a consequence,
\[
|h|_{3}^2\lesssim\int_0^t|h|_{3}|h_t|_{3}
\lesssim\sup_{0\leq{s}\leq t}|h({s})|_{3}\int_0^t|h_t({s})|_{3}\,d{s}
\lesssim\epsilon \sup_{0\leq{s}\leq t}|h({s})|_{3}.
\]
Upon taking the supremum over the inetrval $[0,t]$, we finally have that
\be\label{eq:apriori3der}
\sup_{0\leq{s}\leq t}|h({s})|_{3}\lesssim\epsilon .
\ee

We can now improve the decay result of Lemma~\ref{lm:useful0}, first for the
quantity $|h_t|_2$. Simply using the bound~(\ref{eq:apriori3der}), exactly as in the proof of
Lemma~\ref{lm:useful0} , we infer the improved estimate
\be\label{eq:improbound}
|h_t|_{2}\lesssim\|v\|_{2.5}(1+|h|_3)\lesssim c_1e^{-\beta t/2}.
\ee
As an immediate consequence, we obtain the smallness bound for $\sup_{0\leq{s}\leq t}|h({s})|_{4}$:
\bea
\int_{\Gamma}|\t^4h|^2\,d\theta&=&\int_0^t\int_{\Gamma}\t^4h\t^4h_t\,d\theta\,d{s}
=\int_0^t\int_{\Gamma}\t^6h\t^2h_t\,d\theta\,d{s} \nonumber \\
&\leq&\int_0^t|\t^6h|_0|\t^2h_t|_0\,d{s}
\lesssim\int_0^t\big(\frac{\E^{1/2}}{\chi({s})^{1/2}}c_1e^{-\beta {s}/2}\big)\,d{s} \nonumber \\
&\lesssim&\int_0^t\epsilon \sqrt{c_1}e^{-\gamma s/2}\,d{s}
\lesssim \epsilon \sqrt{c_1} \,. \label{hass7}
\eea
Note that~(\ref{hass7}), in particular,  implies the second bound in~(\ref{re:height1}).
Next, we establish the a priori smallness of $\sup_{0\leq{s}\leq t}|h({s})|_{4.5}$.  Thanks to (\ref{hass7}), 
we  improve the decay bound
for $|h_t|_{2.5}$  in an analogous fashion  to the improved  decay  estimate~(\ref{eq:improbound}) 
for $|h_t|_2$.
We obtain 
$
|h_t|_{2.5}\lesssim c_1e^{-\beta t/2}.
$
The first bound in~(\ref{re:height1}) now follows from the fundamental theorem of calculus and the previous bound.
A straightforward interpolation argument for fractional Sobolev spaces on the unit 
circle $\Gamma$, shows
\be\label{eq:apriorib}
|h|_{4.5}^2\lesssim\int_0^t|h|_6|h_t|_3\,d{s}.
\ee
Using the interpolation estimate~(\ref{eq:interp}), with $l=2.5$, $k=3$, and $m=5$, we see that
\begin{equation}\label{ssss201}
|h_t|_{3}\leq C|h_t|_{2.5}^{4/5}|h_t|_5^{1/5}.
\end{equation} 
Using (\ref{ssss201})  with (\ref{eq:apriorib}), and using the above bound on $|h_t|_{2.5}$, yields 
\beas
|h|_{4.5}^2&\lesssim& \int_0^t|h|_6|h_t|_{2.5}^{4/5}|h_t|_5^{1/5}\,d{s}
\lesssim\int_0^t\frac{\E^{1/2}}{\chi({s})^{1/2}}c_1^{4/5}e^{-2\beta {s}/5}|h_t|_5^{1/5}\,d{s}\\
&\lesssim&\epsilon c_1^{3/10}\int_0^te^{-\bar{\gamma}{s}}|h_t|_5^{1/5}\,d{s},
\eeas
where we also used the bootstrap assumption~(\ref{eq:assumption2}). One checks that
$\bar{\gamma}=-\frac{2\beta}{5}+(\frac{\beta}{4}+\frac{\eta}{2})=\frac{3}{20}\beta-\frac{\eta}{2}>0$. We thus have
\[
|h|_{4.5}^2\lesssim\epsilon c_1^{3/10}\int_0^te^{-\bar{\gamma}{s}/2}
\times \Big(e^{-\bar{\gamma}{s}/2}|h_t|_5^{1/5}\Big)\,d{s}.
\]
 H\"older's inequality with $p=\frac{10}{9}$ and $q=10$ then shows that
\beas
|h|_{4.5}^2&\lesssim& \epsilon c_1^{3/10}
\Big(\int_0^t\big(e^{-\bar{\gamma}{s}/2}\big)^{10/9}\,d{s}\Big)^{9/10}
\Big(\int_0^t(e^{-5\bar{\gamma}{s}}|h_t|_5^2\,d{s}\Big)^{1/10}\\
&\lesssim&\epsilon c_1^{3/10}\Big(\int_0^te^{-5\bar{\gamma}{s}}|h_t|_5^2\,d{s}\Big)^{1/10}
\lesssim\epsilon^{6/5} c_1^{1/5},
\eeas
where the last inequality follows from the definition of $\bar{\gamma}$ above, the bootstrap assumptions ~(\ref{eq:assumption2}) 
and~(\ref{eq:assumption1}), 
and the estimate
\begin{align*}
\int_0^te^{-5\bar{\gamma}{s}}|h_t|_5^2\,d{s}
& \lesssim \int_0^t\frac{1}{c_1}e^{-5\bar{\gamma}{s}+(\beta/4+\eta/2)s}\inf_{\Gamma}(-\g_Nq({s}))|h_t|_5^2\,d{s}\\
& \lesssim \int_0^t\frac{1}{c_1}e^{-(\beta/2+3\eta){s}}\inf_{\Gamma}(-\g_Nq({s}))|h_t|_5^2\,d{s}\\
& \lesssim \frac{1}{c_1}\int_0^t\int_{\Gamma}(-\g_Nq({s}))|\t^5h_t|^2\,d\theta\,d{s}\leq\frac{\epsilon^2}{c_1}\,.
\end{align*}
\end{proof}

\subsection{Differentiation rules for $A$} Since 
 $A=[D\Psi]^{-1}$, it follows that
 \[
\g_tA^k_i=-A^k_rw^r,_sA^s_i;\qquad\t A^k_i=-A^k_r\t\Psi^r,_sA^s_i.
\]
In particular, a simple application of the above identities and the product rule imply that
for any given $a,b\in\N$, 
\begin{subequations}
\label{eq:comm}
\begin{align}
\t^a\g_t^bA^k_i& =-A^k_r\t^a\g_t^b\Psi^r,_sA^s_i+\{\t^a\g_t^b,\,A^k_i\}\,,\\
\{\t^m\g_t^n,\,A^k_i\} & :=\sum_{l+l'\geq1}a_{l,l'}\t^l\g_t^{l'}(A^k_rA^s_i)\t^{m-l}\g_t^{n-l'}\Psi^r,_s \,,
\end{align} 
\end{subequations}
where the term $\{\cdot,\,\cdot\}$  is the {\em commutator} error.
Here the constants $a_{l,l'}$ are some universal constants, depending only on $m$, $n$, $l$ and $l'$
(where $0\leq l\leq m$, $0\leq l'\leq n$).

\subsection{Estimates for $ \nabla \Psi - \operatorname{Id} $ and $ A- \operatorname{Id} $}

 Under assumption (\ref{eq:assumption1}),  the elliptic estimate (\ref{DM}) shows that on the time-interval $[0,T]$,
\begin{align}
\|\nabla \Psi -   \operatorname{Id} \|_{L^\infty(B_1)} &\le C \|\nabla \Psi - \operatorname{Id} \|_{1.5} \le C |h|_2
\label{psi_est1}
\end{align}
and for $0\le s\le 3$,
\[
\|D^2\Psi\|_s \le C |h|_{s+1.5} \,.
\]
Estimate (\ref{psi_est1}) implies that
\begin{align*}
\|A - \operatorname{Id} \|_{L^\infty(B_1)} &= \|(\operatorname{Id}  - \nabla \Psi)A\|_{L^\infty(B_1)} \le C \|A\|_{L^\infty(B_1)}|h|_2;
\end{align*}
thus under assumption (\ref{eq:assumption1}),
\begin{align}
\|A - \operatorname{Id} \|_{L^\infty(B_1)} \le C |h|_2 \label{A-1_est}
\end{align}
Note that (\ref{psi_est1}) and (\ref{A-1_est}) together imply that for $0\le s\le 3$,
\[
\|D A\|_s \le C |h|_{s+1.5}.
\]
Thus, with Lemma \ref{lm:basic}, we have proven the following
\begin{lemma}\label{lm:basic2} With
 the bootstrap assumptions~(\ref{eq:assumption1}),~(\ref{eq:assumption2}) and for
 $\epsilon > 0$  taken sufficiently small,
 $$
 \|\nabla \Psi -   \operatorname{Id} \|_{4}+ \|A - \operatorname{Id} \|_{4}  
 \lesssim \sqrt{ \epsilon }  \,.
 $$
 \end{lemma}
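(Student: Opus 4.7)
The plan is to combine elliptic regularity for the harmonic extension $\Psi$ with the already-established smallness bound $|h|_{4.5} \lesssim \sqrt{\epsilon}$ from Lemma~\ref{lm:basic}, and then transfer the estimate from $\nabla\Psi - \operatorname{Id}$ to $A - \operatorname{Id}$ by exploiting the algebraic identity $A - \operatorname{Id} = A(\operatorname{Id} - \nabla\Psi)$ together with the fact that $H^4(\Omega)$ is an algebra in two dimensions.

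\smallskip

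First I would apply the elliptic estimate~(\ref{DM}) to the Dirichlet problem $\Delta(\Psi - e) = 0$ in $\Omega$ with boundary data $(\Psi - e)|_\Gamma = h\,\xi$, taking $s = 5$. This yields
\[
\|\Psi - e\|_{5} \lesssim |h|_{4.5} \lesssim \sqrt{\epsilon},
\]
where the second inequality is exactly the conclusion of Lemma~\ref{lm:basic}. In particular $\|\nabla\Psi - \operatorname{Id}\|_{4} \lesssim \sqrt{\epsilon}$, which handles the first term in the lemma.

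\smallskip

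For the bound on $A - \operatorname{Id}$, the identity $A(\nabla\Psi) = \operatorname{Id}$ gives $A - \operatorname{Id} = -A\,(\nabla\Psi - \operatorname{Id})$. Since $d = 2$, Sobolev embedding yields $H^4(\Omega) \hookrightarrow L^\infty(\Omega)$ and $H^4(\Omega)$ is a Banach algebra, so one has the product estimate $\|fg\|_{4} \lesssim \|f\|_{4}\|g\|_{4}$. The pointwise bound~(\ref{A-1_est}) and the smallness of $|h|_{2}$ provided by Lemma~\ref{lm:basic} imply $\|A\|_{L^\infty} \le 1 + C\sqrt{\epsilon} \le 2$, and a standard boot-strap using the Neumann series expansion of $A = (\operatorname{Id} - (\operatorname{Id} - \nabla\Psi))^{-1}$ then gives $\|A\|_{4} \lesssim 1$. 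Combining these,
\[
\|A - \operatorname{Id}\|_{4} \lesssim \|A\|_{4}\,\|\nabla\Psi - \operatorname{Id}\|_{4} \lesssim \sqrt{\epsilon}.
\]

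\smallskip

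The only point requiring a little care is controlling $\|A\|_{4}$ itself, since $A$ depends nonlinearly on $\nabla\Psi$ through matrix inversion; this is the mildly technical step, but it is resolved by the Neumann-series argument above, which converges in $H^4$ precisely because $\|\nabla\Psi - \operatorname{Id}\|_{L^\infty} \lesssim |h|_{2}$ is small by~(\ref{psi_est1}) and Lemma~\ref{lm:basic}. The rest is just chaining the elliptic estimate~(\ref{DM}) with $s=5$ to the height bound from Lemma~\ref{lm:basic}, so no new ideas are required beyond what has already appeared in the section.
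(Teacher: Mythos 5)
Your proposal is correct and takes essentially the same route as the paper: apply the elliptic estimate~(\ref{DM}) with $s=5$ to transfer $|h|_{4.5}\lesssim\sqrt\epsilon$ from Lemma~\ref{lm:basic} to $\|\nabla\Psi-\operatorname{Id}\|_4$, then pass to $A-\operatorname{Id}$ through the algebraic identity, the only (cosmetic) difference being that the paper differentiates $A-\operatorname{Id}=(\operatorname{Id}-\nabla\Psi)A$ and works inductively in $s$ whereas you invoke a Neumann series and the Banach-algebra property of $H^4(\Omega)$ in two dimensions. One small caveat: $H^4$-convergence of the Neumann series for $A$ requires $\|\operatorname{Id}-\nabla\Psi\|_4$ small, not merely the $L^\infty$ smallness you mention at the end, but this is exactly what you established in the first paragraph, so the gap is immediately filled.
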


\subsection{High-order derivatives of $q$}\label{se:equiv}
Because our energy function $ \mathcal{E} (t)$ is formed using  only tangential derivatives  in space, the
 purpose of this section is show that radial derivatives of the temperature $q$ are also bounded, and thus the
 full Sobolev norms of the 
temperature $q$ are controlled by our energy function, as  was explained in the
introduction. 

We will make use of the heat equation and its time-differentiated variants:
\begin{subequations}
\label{heatss}
\begin{align} 
q_t - \Delta _\Psi q  & = f_0 \,, \\
q_{tt} - \Delta _\Psi q_t &= f_1\, \\
q_{ttt} - \Delta _\Psi q_{tt} &= f_2\,,
\end{align} 
\end{subequations}
where $ \Delta _\Psi = A^j_i \frac{ \partial}{\partial x_j} \left( A^k_i \frac{ \partial}{\partial x_k}\right)$ and where the forcing functions $f_0,f_1,f_2$ are given by
\begin{align*} 
f_0 & = - \Psi_t \cdot v \,, \\
f_1 & = -(\Psi_t \cdot v)_t + A^j_i( \partial_t A^k_i \, q,_k),_j +  \partial _t A^j_i( A^k_i \, q,_k),_j \,, \\
f_2 & = -(\Psi_t \cdot v)_{tt} + 2 A^j_i( \partial_t A^k_i \, q_t,_k),_j +  2\partial _t A^j_i( A^k_i \, q_t,_k),_j 
 +  2\partial _t A^j_i( \partial _t A^k_i \, q,_k),_j \\
 & \qquad\qquad   +  \partial^2 _t A^j_i( A^k_i \, q_t,_k),_j + A^j_i( \partial^2_t A^k_i \, q,_k),_j \,.
\end{align*} 

We will repeatedly make use of the following elliptic estimate:

\begin{lemma}[Elliptic regularity with Sobolev-class coefficients] \label{sobolevA}  
Let $q$ denote the unique $ H^1_0(\Omega) $ solution to
\begin{alignat*}{2}
-\Delta _\Psi q  & = F && \ \  \operatorname{ in } \ \ \Omega  \,,\\
q&=0 && \ \  \operatorname{ on }\ \  \partial \Omega \,.
\end{alignat*} 
Suppose that $\displaystyle{}k >1$, $F \in H^{k-1}(\Omega)$, and 
 $A \in H^k(\Omega)$ satisfying  $A^k_iA^j_i \xi _j \xi _k \ge \lambda |\xi |^2$ for all $ \xi \in \mathbb{R}  ^2$ for some $\lambda >0$.  Then
$$
\|q\|_{H^{k+1}(\Omega)} \le C \Big[\|F\|_{H^{k-1}(\Omega)} + \|A\|^p_{H^k(\Omega)} \|F\|_{L^2(\Omega)} \Big]
$$
for some power $p > 1$.
\end{lemma}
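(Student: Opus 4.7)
The plan is to treat the equation as a uniformly elliptic non-divergence form PDE and bootstrap regularity. Expanding the operator,
\[
-\Delta_\Psi q = -a^{jk}\, q,_{jk} - b^k\, q,_k = F,\qquad a^{jk}:=A^j_i A^k_i,\quad b^k:=A^j_i\, A^k_i,_j,
\]
the hypothesis gives $a^{jk}\xi_j\xi_k\ge \lambda|\xi|^2$. Since $\Omega\subset\mathbb R^2$ and $k>1$, Sobolev embedding yields $a^{jk}\in H^k(\Omega)\hookrightarrow C^0(\overline\Omega)\cap L^\infty$ and $b^k\in H^{k-1}$, so that classical Calder\'on--Zygmund / Agmon--Douglis--Nirenberg theory is applicable. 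Testing the equation against $q$ and using coercivity and Poincar\'e gives $\|q\|_1\lesssim \|F\|_0$; combining this with the standard $W^{2,2}$ estimate for operators with continuous leading coefficients yields a base estimate of the form $\|q\|_2\le C(1+\|A\|_k^{p_0})\|F\|_0$ for some $p_0\ge 1$, where the $\|A\|_k$-dependence arises from estimating the drift $b^k q,_k$ in $L^2$ by H\"older and absorbing the lower-order contribution.

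I would then bootstrap by induction on integer levels $m$, from $m=2$ up to $m=\lfloor k\rfloor+1$, filling in fractional $k$ via real interpolation of Sobolev spaces. At the inductive step one differentiates the equation by $\partial^\alpha$ with $|\alpha|=m-1$---tangentially near $\partial\Omega$, recovering normal derivatives algebraically from the equation via ellipticity. The commutator
\[
[\partial^\alpha, a^{jk}\partial_j\partial_k]q = \sum_{\substack{\beta+\gamma=\alpha\\ |\beta|\ge 1}} c_{\beta\gamma}\, \partial^\beta a^{jk}\, \partial^\gamma q,_{jk},
\]
together with the analogous commutator involving $b^k\partial_k$, is controlled by the tame Moser product estimate $\|fg\|_s\lesssim\|f\|_{L^\infty}\|g\|_s+\|f\|_s\|g\|_{L^\infty}$ combined with Gagliardo--Nirenberg interpolation. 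This converts intermediate norms of $q$ into a small power of $\|q\|_{k+1}$ (absorbed into the left-hand side) multiplied by larger powers of lower-order norms, which are then handled by the induction hypothesis.

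The delicate point---which I expect to be the main obstacle---is to organize the estimates so that the constant multiplying $\|F\|_{k-1}$ remains independent of $\|A\|_k$, while every accumulating power of $\|A\|_k$ multiplies only the low norm $\|F\|_0$. This separation requires repeatedly invoking the induction hypothesis to replace each intermediate $H^m$-norm of $q$ (with $m\le k$) by $C\|F\|_{m-2}+C\|A\|_k^{p_{m-1}}\|F\|_0$, and then iteratively reducing $\|F\|_{m-2}$ until only $\|F\|_0$ survives as the factor paired with powers of $\|A\|_k$. The final exponent $p>1$ is the supremum of the $p_m$'s accumulated through the induction; it grows linearly in $k$ but is finite, completing the proof.
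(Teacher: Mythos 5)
Your proposal reaches the same conclusion by a recognizably different mechanism. The paper (whose proof of this lemma is embedded in Steps 3--5 of the proof of Lemma~\ref{lm:1}) keeps the operator in \emph{divergence} form, $-(\mathcal{A}^{jk}q,_k),_j = F - A^j_i,_j A^k_i q,_k$ with $\mathcal{A}^{jk}=A^j_iA^k_i$, applies $\bp^\alpha$ tangentially, multiplies by $\bp^\alpha q$ and integrates by parts; coercivity ($\mathcal{A} \ge \lambda \operatorname{Id}$) then directly yields $\|\bp^\alpha q\|_1$, after which the commutators are absorbed via Young's inequality and tame product-type bounds, and the missing normal derivatives are solved for algebraically from the equation in polar coordinates. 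You instead pass to \emph{non-divergence} form $-a^{jk}q,_{jk}-b^kq,_k=F$, invoke Calder\'on--Zygmund / Agmon--Douglis--Nirenberg theory for the $W^{2,2}$ base case, and then run the same tangential-then-normal bootstrap with Moser-type tame estimates to keep $\|F\|_{k-1}$ unmultiplied by powers of $\|A\|_k$. Both strategies are legitimate; what the paper's energy route buys is that the constants it produces depend only on $\lambda$ and $\|A\|_{L^\infty}$ rather than on the modulus of continuity of $a^{jk}$, which enters the CZ/ADN constant in a way that is not transparently polynomial in $\|A\|_{H^k}$ — a point worth making precise if you carry out your version in full, especially for the drift term $b^kq,_k$ when $k$ is close to $1$ (where $b^k\in H^{k-1}$ is only barely better than $L^2$ and your ``Hölder and absorb'' step needs the product rule $H^{k-1}\cdot H^1\subset H^{k-1}\subset L^2$ rather than naive Hölder in $L^4$). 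The interpolation-in-$k$ step should also be stated so that the exponent $p$ is taken uniformly over the relevant integer endpoints, since it increases with $k$.
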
 
\begin{proof} We provide the details in the course of the proof of Lemma \ref{lm:1}.
\end{proof}

\begin{lemma}[Bounding $\partial_t^lq$, $l=0,1,2,3$, by $\E(t)$]\label{lm:1}
With the bootstrap assumptions~(\ref{eq:assumption1}) and~(\ref{eq:assumption2}) holding, and  with $\epsilon>0$ sufficiently small,
there exists a constant $C^*$ such that 
\[
\|q_{ttt}\|_{0}^2 + \|q_{tt}\|_{2}^2 + \|q_{t}\|_{4}^2 + \|q\|_{6}^2
\leq C^*\E.
\]
\end{lemma}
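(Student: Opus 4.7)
The plan is to convert the tangential-in-space, tangential-in-time control provided by $\mathcal{E}$ into full Sobolev control via an elliptic bootstrap against the heat equation and its time-differentiated versions, using that $q$, $q_t$, $q_{tt}$ (and hence $q_{ttt}$) all vanish on $\Gamma$ as consequences of differentiating the Dirichlet condition~(\ref{eq:ALEdirichlet}) in time. I will run the argument from the top down in regularity, starting with $\|q_{ttt}\|_0$ (which is essentially already inside $\mathcal{E}$), and then use Lemma~\ref{sobolevA} applied to the three elliptic equations
\[
-\Delta_{\Psi}q_{tt}=-q_{ttt}+f_2,\qquad
-\Delta_{\Psi}q_{t}=-q_{tt}+f_1,\qquad
-\Delta_{\Psi}q=-q_{t}+f_0,
\]
with zero Dirichlet data, to successively gain two spatial derivatives.

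\emph{Step 1 (direct $L^2$ control).} With the cut-off $\mu$, the energy contains the terms $\|\mu^{1/2}(\partial_t^3 q+\partial_t^3\Psi\cdot v)\|_0^2$ and the near-origin Cartesian piece $\sum_{|\vec\alpha|+2b\leq 6}\|(1-\mu)^{1/2}(\partial_{\vec\alpha}\partial_t^b q+\partial_{\vec\alpha}\partial_t^b\Psi\cdot v)\|_0^2$. Since the bootstrap assumption~(\ref{eq:assumption1}) and Lemma~\ref{lm:basic2} make $\Psi_t,\Psi_{tt},\Psi_{ttt}$ small in suitable norms while $v$ is controlled by $\mathcal{E}$, the auxiliary pieces $\partial_t^b\Psi\cdot v$ are absorbed as lower-order terms, yielding $\|q_{ttt}\|_0^2\lesssim\mathcal{E}$.

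\emph{Step 2 (elliptic bootstrap).} Lemma~\ref{lm:basic2} gives $\|A-\operatorname{Id}\|_4\lesssim\sqrt{\epsilon}$, so for $\epsilon$ small, $A^k_iA^j_i$ is uniformly elliptic and Lemma~\ref{sobolevA} applies. Applied to the $q_{tt}$-equation with $k=1$, it gives
\[
\|q_{tt}\|_2\lesssim \|q_{ttt}\|_0+\|f_2\|_0+\|A\|_1^p\bigl(\|q_{ttt}\|_0+\|f_2\|_0\bigr),
\]
and the forcing $f_2$, a polynomial in $A$, $A_t$, $A_{tt}$, $v$, $q$, $q_t$, $\Psi_t$, $\Psi_{tt}$, is bounded by $C^*\mathcal{E}$ using Lemma~\ref{lm:basic2}, the identity $v^i=-A^k_iq,_k$, and the commutator identities~(\ref{eq:comm}). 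Applying Lemma~\ref{sobolevA} to the $q_t$-equation with $k=3$ and to the $q$-equation with $k=5$ gives
\[
\|q_t\|_4\lesssim \|q_{tt}\|_2+\|f_1\|_2+\|A\|_3^p(\cdots),\qquad
\|q\|_6\lesssim \|q_t\|_4+\|f_0\|_4+\|A\|_5^p(\cdots),
\]
and each forcing term is again a polynomial expression in $A$, its space- and time-derivatives (hence in $\Psi-e$, which by~(\ref{DM}) is controlled by $h$), and the previously estimated $q$-derivatives, all of which are bounded by $\mathcal{E}$.

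\emph{Main obstacle.} The delicate point is the top step $\|q\|_6$, because it requires controlling $\|A\|_5$, which via~(\ref{DM}) asks for $|h|_{5.5}$, while the energy only controls $|\sqrt{\chi}\,\bar\partial^6 h|_0$ with the potentially small weight $\chi(t)$. The way out is to observe that in the expression $\|A\|_5^p\|-q_t+f_0\|_0$, the low-order factor $\|-q_t+f_0\|_0$ decays exponentially via the bootstrap-controlled $E_\beta$, and by Lemma~\ref{lm:useful} the product $\chi(t)^{-p/2}\|q_t\|_0$ stays small; equivalently, the expression $\bar\partial^a\partial_t^b q+\bar\partial^a\partial_t^b\Psi\cdot v$ in the definition of $\mathcal{E}$ is precisely designed so that the top tangential derivatives of $h$ appear paired with $v\sim-\nabla q$, thereby giving unweighted control of the relevant combination. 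Once these pairings are unwound using~(\ref{eq:comm}) and Lemma~\ref{lm:basic}, the four inequalities combine into the desired bound $\|q_{ttt}\|_0^2+\|q_{tt}\|_2^2+\|q_t\|_4^2+\|q\|_6^2\leq C^*\mathcal{E}$.
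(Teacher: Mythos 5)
Your overall strategy---an elliptic bootstrap, descending in the number of time derivatives and using Lemma~\ref{sobolevA} on the three equations $-\Delta_\Psi q_{tt}=-q_{ttt}+f_2$, $-\Delta_\Psi q_t=-q_{tt}+f_1$, $-\Delta_\Psi q=-q_t+f_0$---is the right one and matches the paper's. But there is a genuine gap at the very first step, and your resolution of what you call the ``main obstacle'' is only half right.

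The gap: you claim $\|q_{ttt}\|_0^2\lesssim\E$ is ``essentially already inside $\E$'' because the combination $\partial_t^3\Psi\cdot v$ can be ``absorbed as lower-order.'' This does not go through. The energy contains only $\|\mu^{1/2}(\partial_t^3 q+\partial_t^3\Psi\cdot v)\|_0^2$, so to extract $\|q_{ttt}\|_0$ you need to bound $\|\partial_t^3\Psi\cdot v\|_0\lesssim\|v\|_3\,\|\partial_t^3\Psi\|_0$, and by the elliptic estimate~(\ref{DM}) this asks for $|h_{ttt}|_{0.5}$. That quantity does \emph{not} appear in $\E$; it must be estimated separately. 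The paper does this by twice time-differentiating the boundary evolution equation~(\ref{eq:ALEneumann2}) to express $h_{ttt}$ through $v_{tt}\cdot N$ and $v_{tt}\cdot\tau$, and then controlling these traces via the normal and tangential trace inequalities applied to $\div v_{tt}$ (using the heat equation, $\div_\Psi v=-(q_t+v\cdot\Psi_t)$) and $\curl v_{tt}$ (using the irrotationality $\curl_\Psi v=0$). The resulting bound is self-referential, $|h_{ttt}|_{0.5}^2\lesssim\E+\epsilon\mathcal{X}$ with $\mathcal{X}$ the quantity you are trying to estimate, and one must carry the $\epsilon\mathcal{X}$ through every subsequent step and absorb it at the very end. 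Your proposal never acknowledges this circularity or the trace/div-curl argument needed to break into it.

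On the top step $\|q\|_6$: your first diagnosis is the correct one---the dangerous coefficient norm $\|\bar\partial^\alpha\mathcal{A}\|_0\lesssim|h|_{5.5}\lesssim(\E/\chi)^{1/2}$ must be paired with a low-order factor $Dq$ (or $q_t$) whose $E_\beta$-type exponential decay, combined with Lemma~\ref{lm:useful}, kills the $\chi^{-1/2}$ weight; this is precisely what the paper does in~(\ref{ss701})--(\ref{ss721}). But your ``equivalently, the combination $\bar\partial^a\partial_t^b q+\bar\partial^a\partial_t^b\Psi\cdot v$ in $\E$ gives unweighted control of the relevant combination'' is not correct and not equivalent: those pairings are built to match the ALE divergence structure of the heat equation in the energy identity, not to remove the $\chi$-weight from $h$. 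You should drop that sentence. Finally, your write-up omits that the tangential estimates near $\Gamma$ must be converted into full isotropic Sobolev control by combining the $(1-\mu)$ Cartesian piece near the origin with the equation written in polar coordinates to recover radial derivatives near the boundary; this is mechanical but necessary.
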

\def\p{\partial}
\begin{proof}
{\em Step1. Estimating $|h_{ttt}|_{0.5}$}.
We denote by $ \mathcal{X}(t)$ the quantity $\|q_{ttt}\|_{0}^2 + \|q_{tt}\|_{2}^2 + \|q_{t}\|_{4}^2 + \|q\|_{6}^2$.
Twice time-differentiating (\ref{eq:ALEneumann2}), we find that
\be\label{eq:clever}
h_{ttt}=v_{tt}\cdot N-\big[\frac{h_{\theta}}{1+h}\big]_{tt}v\cdot\tau-2\big[\frac{h_{\theta}}{1+h}\big]_{t}v_t\cdot\tau
-\frac{h_{\theta}}{1+h}v_{tt}\cdot\tau.
\ee
By the normal trace theorem (see, for example, equation (6.1) in \cite{CoSh12}),
\[
|v_{tt}\cdot N|_{0.5}\lesssim\|\t v_{tt}\|_0^2+\|\div\, v_{tt}\|_0^2.
\]
Note that 
\be\label{eq:divest}
\div v_{tt}= (\div_{\Psi}\,v)_{tt}+((\div-\div_{\Psi})v)_{tt}
=(q_t+v\cdot\Psi_t)_{tt}+[(A^k_i-\delta^k_i)v^i,_k]_{tt}
=q_{ttt}+\Psi_{ttt}\cdot v + \mathcal{R},
\ee
where the remainder $\mathcal{R}$ reads
\[
\mathcal{R}= 2\Psi_{tt}\cdot v_t+\Psi_t\cdot v_{tt}+(A^k_i-\delta^k_i)_{tt}v^i,_k+2(A^k_i-\delta^k_i)_tv_{t,k}^i
+(A^k_i-\delta^k_i)v_{tt,k}^i.
\]
From Lemma~\ref{lm:basic} and~\ref{lm:useful}, we obtain the estimate
$\|\mathcal{R}\|_0^2\lesssim \epsilon \E+\epsilon \mathcal{X}$.
Thus, returning to~(\ref{eq:divest}) and using that $\|q_{ttt}+\Psi_{ttt}\cdot v\|_0^2\leq \E$ by~(\ref{eq:ALEenergy}), we get
$
\|\div v_{tt}\|_0^2\lesssim \E+\epsilon \mathcal{X} 
$
and consequently
\be\label{eq:curlest}
|v_{tt}\cdot N|_{0.5}\lesssim \E+\epsilon \mathcal{X}.
\ee
As for the last term on the right-hand side of~(\ref{eq:clever}), we use
the tangential trace theorem (see, for example, equation (6.2) in \cite{CoSh12}) to infer that
\[
|v_{tt}\cdot \tau|\lesssim \|\t v_{tt}\|_0^2 + \|\curl\, v_{tt}\|_0^2.
\]
Since $\curl_{\Psi}v=0$ (recall $v=-\nabla p\circ \Psi$), we have $\curl v_{tt}=[(\curl-\curl_{\Psi})v]_{tt}$.
By a similar inequality as above, using Lemmas~\ref{lm:basic} and~\ref{lm:useful}, we obtain the bound
$\|[(\curl-\curl_{\Psi})v]_{tt}\|_0^2\lesssim \epsilon \E+\epsilon\mathcal{X}$. Together with~(\ref{eq:curlest})
and $\|\g v_{tt}\|_0^2\leq \E$, this leads to
\[
|v_{tt}\cdot \tau|_{0.5}\lesssim \E+\epsilon \mathcal{X}.
\]
Together with the smallness of $h_{\theta}$ and $h_{\theta t}$ from Lemma~\ref{lm:basic}, the bound $|\sqrt{\chi}\t h_{tt}|_1^2\leq \E$ and Lemma~\ref{lm:useful},
we finally infer from~(\ref{eq:clever}) that
\be\label{eq:clever1}
|h_{ttt}|_{0.5}\lesssim\E +\epsilon \mathcal{X}.
\ee
{\em Step 2: $L^2$ estimates for $\partial_t^l q$.}
By the triangle inequality and the definition~(\ref{eq:ALEenergy}) of $\E(t)$, we have that for $ l=1,2,3$,
\begin{align*} 
\|\partial_t^l q\|_0^2   &  \leq\|\partial_t^lq+\partial_t^l\Psi \cdot v\|_0^2  +   \|\partial_t^l\Psi\cdot v\|_0^2  \\
& \leq \mathcal{E} (t)  +\|\partial_t^l\Psi\cdot v\|_0^2 \\
& \lesssim \mathcal{E} (t) 
+ \|v\|_{3}^2  \| \partial_t^l\Psi\|_0^2 \lesssim \E (t)+\epsilon ^2|h_{ttt}|_{0.5}^2\\
& \lesssim  \mathcal{E} (t) +\epsilon \mathcal{X} \,,
\end{align*} 
where we used the Sobolev embedding theorem and~(\ref{eq:clever1}). 

\vspace{.1 in}
\noindent
{\em Step 3: $H^2$ estimate for $q_{tt}$.}    We consider the elliptic equation $- \Delta_\Psi q = f_0-q_t$.   We note that
Lemma \ref{lm:basic2} ensures that  $A^k_iA^j_i \xi _k \xi _j \ge {\frac{1}{2}} | \xi |^2$ for all $ \xi \in \mathbb{R}  ^2$.
Given that $\|f_0 - q_t\|^2_0 \lesssim \E$, elliptic estimates show that
 $\|q\|^2_2 \lesssim \E$.   This, in turn, implies that $\|f_1 - q_{tt}\|_0^2 \lesssim \E$, and elliptic estimates then show that
 $\|q_t\|^2_2 \lesssim \E$.  Hence, we have that $\|f_2 - q_{ttt}\|_0^2 \lesssim \E+\epsilon\mathcal{X}$, and once again use elliptic estimates to conclude
 that  $\|q_{tt}\|^2_2 \lesssim \E+\epsilon\mathcal{X}$.

\vspace{.1 in}
\noindent
{\em Step 4: $H^4$ estimate for $q_{t}$.}   Since $\|f_0 -q_t\|_2^2 \lesssim \E$, Lemma \ref{sobolevA} shows that
$ \|q\|_4^2 \lesssim \E$; thus,  $\|f_1 - q_{tt}\|_2^2 \lesssim \E+\epsilon \mathcal{X}$.
Another application of Lemma \ref{sobolevA} together with Lemma \ref{lm:basic2} then shows that
$\|q_{t}\|_4^2 \lesssim \E + \epsilon \mathcal{X}$.

\vspace{.1 in}
\noindent
{\em Step 5: $H^6$ estimate for $q$.} The elliptic estimates in Steps 3 and 4 made use of Lemma \ref{lm:basic2}.   To obtain the
$H^6$ estimate for $q$ requires us to improve the elliptic estimate in Lemma \ref{sobolevA} to be linear in $\| \sqrt{\chi} \Psi\|_6$.
To this end, we write $ \mathcal{A} ^{jk} = A^j_i A^k_i$ and rewrite (\ref{heatss}a) as
\begin{equation}\label{ss77}
-( \mathcal{A} ^{jk} q,_k),_j = -q_t + f_0 - A^j_i,_j A^k_i q,_k \,.
\end{equation} 
Letting $\bp^\alpha$ act on (\ref{ss77}), we find that $\bp^\alpha q$ satisfies
\begin{align*} 
-\Big[\mathcal{A} ^{ij} (\bp^\alpha q),_j\Big],_i & = - \bp^\alpha (\Psi_t \cdot v +q_t) + \sum_{0<\beta\le \alpha} C_{\alpha\beta} \Big[(\bp^\beta \mathcal{A} ^{ij})(\bp^{\alpha-\beta} q),_j\Big],_i 
\\ & \qquad\qquad
- \sum_{0\le \beta < \alpha} C_{\alpha\beta} \bp^\beta \Big(A^j_i,_j A^k_i\Big) \bp^{\alpha-\beta} q,_j  \,,
\end{align*} 
where $C_{\alpha \beta }$ are constants from the product rule.
Multiplying this equation with $\bp^ \alpha q$ and integrating-by-parts, using the fact that $\bp^ \alpha q=0 $ on $\partial \Omega$ and that $ \mathcal{A} \ge 1/2$, we find that
\begin{align} 
{\frac{1}{2}} \| \bar \p^ \alpha q \|_1^2 &   \le \| \bp^ {\alpha-1} (\Psi_t\cdot v +q_t)\|_0 \| \bp^ {\alpha +1} q\|_0
+ \sum_{0<\beta\le \alpha} C_{\alpha\beta} \Big\| (\bp^\beta \mathcal{A} ^{ij})(\bp^{\alpha-\beta} q),_j\Big\|_0 \| \bp^ \alpha q,_i\|_0 \nonumber \\
& \ \ +  \sum_{0\le \beta < \alpha} C_{\alpha\beta} \Big\| \bp^\beta \Big(A^j_i,_j A^k_i\Big) \bp^{\alpha-\beta} q,_j \Big\|_0 \| \bp^ \alpha q\|_0 
+  \Big\| \bp^{ \alpha -1} \Big(A^j_i,_j A^k_i\Big)  \bp q,_j \Big\|_0 \| \bp^ \alpha q\|_1
\,. \label{ss709}
\end{align} 
Let us examine the second term on the right-hand side of (\ref{ss709}).   By Young's inequality, for $ \delta >0$, 
\begin{align*}
 \sum_{0<\beta\le \alpha} C_{\alpha\beta} \Big\| (\bp^\beta \mathcal{A} ^{ij})(\bp^{\alpha-\beta} q),_j\Big\|_0 \| \bp^ \alpha q,_i\|_0
& \le  \delta  \| \bp^ \alpha q\|_1^2 + C_ \delta  \sum_{0<\beta\le \alpha} C_{\alpha\beta} \Big\| \bp^\beta \mathcal{A}  \ \bp^{\alpha-\beta} Dq\Big\|_0^2
\end{align*} 
where $C_\delta = C/ \delta $.     Since $\bp^5 \mathcal{A} \sim \bp^5 D\Psi\, P(A) + \bp^4 D\Psi\, P(\bp D\Psi,A) + \bp^3 D\Psi\, P(\bp^2D\Psi, \bp D\Psi, A)$, 
it thus follows that for $\alpha = 4 \text{ or } 5$,
\begin{equation}\label{ss701}
\| \bp^\alpha  \mathcal{A}\|_0 \le C \|\bp^ {\alpha -2} (\Psi-e)\|^2_3 \le C\frac{ | \sqrt{ \chi} h|^2_{5.5} }{ {\chi}} \lessim  \frac{ \E }{ {\chi}}\,.
\end{equation} 
The linear inequality (\ref{ss701}) shows that our bootstrap assumptions~(\ref{eq:assumption1}) and~(\ref{eq:assumption2}) imply that the map $h \mapsto \mathcal{A} $ is
linear with respect to these high norms.  

We first consider the case that $ \alpha =4$.   From (\ref{ss701})
when $ \alpha= \beta =4$
\begin{align}
\| \bp^ \alpha   \mathcal{A} \ D q\|_0^2 \lessim \frac{E_\beta e^{-\beta t}}{\chi} \mathcal{E}  \lessim \epsilon e^{-\gamma t} \mathcal{E}  \,. \label{ss721}
 \end{align} 
The Cauchy-Schwarz inequality, together with the Sobolev embedding theorem, shows that  $\| \bp^3 \mathcal{A} \ \bp  Dq\|_0^2$ has the same bound.
Next, $\| \bp^2 \mathcal{A} \ \bp^2  Dq\|_0^2 
+ \| \bp \mathcal{A} \ \bp^3  Dq\|_0^2  \lessim  \|\Psi\|_4^2 \|q\|_4^2 \lessim \epsilon e^{- \beta t} \mathcal{E}  \lessim \epsilon e^{- \gamma t} \mathcal{E} $.

The first, third, and fourth terms on the right-hand side of (\ref{ss709}) are estimated in a similar fashion, so we do not provide the details.  Hence,   by choosing $ \delta >0$ sufficiently
small and employing Young's inequality, we  find that
$$
\|q\|_4^2 + \sum_{ \alpha \le 4} \| \bar \p^ \alpha q \|_1^2  \lessim \mathcal{E} +\epsilon\mathcal{X} \,.
$$

To estimate radial derivatives, we use
 polar coordinates for the disc (with the usual basis ${\bf e_r}$ and ${\bf e_\theta}$).   Expressing  the components of the matrix $ \mathcal{A} $ as
$$
\mathcal{A} = \left[
\begin{array}{lr}
\mathcal{A} ^{rr} & \mathcal{A} ^{r \theta } \\ 
\mathcal{A} ^{\theta r} &\mathcal{A} ^{ \theta \theta }
\end{array}\right]\,,
$$
we may write 
$$
\operatorname{div} ( \mathcal{A}  \ \nabla q) = r^{-1}  (r \mathcal{A} ^{rr} q_r)_r +  r^{-1}  ( \mathcal{A} ^{r\theta } q_ \theta )_r +  r^{-1}  ( \mathcal{A} ^{r\theta } q_r)_\theta 
+  r ^{-1} ( r^{-1} \mathcal{A} ^{ \theta \theta } q_\theta )_\theta  \,.
$$

It follows that 
\begin{align} 
- \mathcal{A} ^{rr} \bp^\alpha q_{rr} & =  r ^{-1} ( r\mathcal{A} ^{rr})_r \bp^ \alpha q_r  +  r^{-1}  ( \mathcal{A} ^{r\theta } \bp^ \alpha q_r)_r +  r^{-1}  ( \mathcal{A} ^{r\theta } \bp^ \alpha q_r)_\theta +  r ^{-1} ( r^{-1} \mathcal{A} ^{ \theta \theta } \bp^ \alpha q_\theta )_\theta  \nonumber
 \\
& \qquad\qquad 
- \bp^\alpha (\Psi_t \cdot v +q_t) + \sum_{0<\beta\le \alpha} C_{\alpha\beta} \Big[(\bp^\beta \mathcal{A} ^{ij})(\bp^{\alpha-\beta} q),_j\Big],_i  \nonumber
\\ & \qquad\qquad
- \sum_{0\le \beta \le \alpha} C_{\alpha\beta} \bp^\beta \Big(A^j_i,_j A^k_i\Big) \bp^{\alpha-\beta} q,_j  \,, \label{ss710}
\end{align} 

Let $ \omega = \{ x \in \Omega\,\,\,: \,\,\, {\frac{1}{2}} < |x| < 1\}$.
For $ \alpha \le 3$, every term on the right-hand side has $L^2(\omega)$-norm bounded by a constant multiple of $ \mathcal{E} $.  Hence, it follows that
$$
\sum_{ \alpha \le 3} \| \bp ^ \alpha q\|_{2, \omega }^2 \lessim \E +\epsilon \mathcal{X} \,.
$$
Allowing $\frac{\p}{\p r}$ to act on (\ref{ss710}), as many as three times, we conclude that
\begin{equation}\label{ss720}
\| q\|_{5, \omega }^2 \lessim \E +\epsilon\mathcal{X}\,.
\end{equation}

We return to the inequality (\ref{ss709}) and consider the case that $ \alpha =5$.  Once again, we focus on the second term on the right-hand side,
the first and third terms being similar (and easier).
 From (\ref{ss721}) $\| \bp^ 5  \mathcal{A} \ D q\|_0^2  \lessim \epsilon e^{-\gamma t} \mathcal{E}$.
The Cauchy-Schwarz inequality, together with the Sobolev embedding theorem, shows that  $\| \bp^4 \mathcal{A} \ \bp  Dq\|_0^2
+\| \bp^3 \mathcal{A} \ \bp^2  Dq\|_0^2 + \| \bp^2 \mathcal{A} \ \bp^3  Dq\|_0^2  \lessim \epsilon e^{-\gamma t} \mathcal{E}$.  Finally,  using (\ref{ss720}), we conclude
$ \| \bp \mathcal{A} \ \bp^4  Dq\|_0^2 \lessim \epsilon \|v\|_4^2 \lessim \epsilon e^{-\gamma t} \mathcal{E}$.   
We conclude that
$$
\|q\|_4^2 + \sum_{ \alpha \le 5} \| \bar \p^ \alpha q \|_1^2  \lessim \mathcal{E} +\epsilon \mathcal{X}\,.
$$
Then setting $ \alpha =0$ and letting $\frac{\p^4}{\p r^4}$ act on  (\ref{ss710}) shows that indeed
$$
\| q\|_{6,\omega }^2 \lessim \E+\epsilon \mathcal{X}\,.
$$
By using a smooth cut-off function whose support contains $\overline{\Omega\setminus \omega }$, we easily obtain the interior estimates, and find that
$ \|q\|_6^2 \lessim \E=\epsilon \mathcal{X}$.
Recalling the definition of $\mathcal{X}$ and the estimates from Steps 2, 3, and 4, we finally infer
$\mathcal{X}\lesssim \E$, which concludes the proof of the lemma.
\end{proof}
\begin{lemma}[Bounding $\partial_t^lq$, $l=0,1,2,3$ by $\D(t)$]\label{lm:2}
With the bootstrap assumptions~(\ref{eq:assumption1}) and~(\ref{eq:assumption2}),and for $\epsilon>0$ sufficiently small,
there exists a $\gamma>0$ such that
\be\label{eq:este}
\sum_{l=0}^2\|\g_t^lq_t\|_{5-2l}^2 + \| q\|_{6.5}^2\lesssim 
\epsilon e^{-\gamma t}\E+\D.
\ee
\end{lemma}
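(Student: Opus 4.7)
The approach closely mirrors the strategy employed in the proof of Lemma \ref{lm:1}, but with a critical refinement: instead of tolerating the (implicit) $\chi(t)^{-1}$ factor that appears when elliptic regularity is applied with Sobolev-class coefficients whose highest-order behavior is controlled only by $\sqrt{\mathcal{E}/\chi}$, we isolate this dangerous contribution and absorb it into the exponentially decaying factor $\epsilon e^{-\gamma t}\mathcal{E}$ via Lemma \ref{lm:useful}. The dissipation $\mathcal{D}$ provides direct control on time-differentiated quantities (the tangential norms of $q_t,q_{tt},q_{ttt}$), and the role of the parabolic equations~(\ref{heatss}) is to trade time derivatives for space derivatives in order to upgrade $\mathcal{D}$-type bounds into full Sobolev bounds, while simultaneously producing the extra half-derivative on $q$.

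First, I would read off from the definition of $\mathcal{D}$ tangential-and-normal derivative bounds of the form $\|\mu^{1/2}\t^a\p_t^b q_t\|_0^2 + \|(1-\mu)^{1/2}\p^{\vec{\alpha}}\p_t^b q_t\|_0^2 \lesssim \mathcal{D}$ (for $a+2b\le 5$, $|\vec{\alpha}|+2b\le 5$), using the smallness of $\p_t^b\Psi_t\cdot v$ guaranteed by the a priori bounds of Section~\ref{se:basic}. Next, I run the iterative elliptic bootstrap from the bottom up. Applying an elliptic estimate to $-\Delta_\Psi q_{tt} = f_2 - q_{ttt}$, the main forcing $\|f_2-q_{ttt}\|_1^2$ is already $\lesssim\mathcal{D}$; the coefficient correction terms are controlled via an analogue of inequality~(\ref{ss709}), where the dangerous top-order coefficient pieces $\|\bp^\alpha\mathcal{A}\|_0$ (bounded by $\sqrt{\mathcal{E}/\chi}$ using~(\ref{ss701})) multiply lower-order derivatives of $q_{tt}$ whose decay is controlled through $E_\beta$. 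The crucial Lemma \ref{lm:useful} then converts $(E_\beta e^{-\beta t}/\chi)$ into $\epsilon e^{-\gamma t}$, yielding $\|q_{tt}\|_3^2\lesssim \mathcal{D}+\epsilon e^{-\gamma t}\mathcal{E}$.

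Climbing one level, I apply elliptic regularity to $-\Delta_\Psi q_t = f_1 - q_{tt}$; using Step 1 and the bound on $\|q_{tt}\|_3^2$ just obtained, one finds $\|f_1-q_{tt}\|_3^2\lesssim \mathcal{D}+\epsilon e^{-\gamma t}\mathcal{E}$, and the same coefficient-versus-decay trade-off delivers $\|q_t\|_5^2\lesssim \mathcal{D}+\epsilon e^{-\gamma t}\mathcal{E}$. Finally, applying the same mechanism to $-\Delta_\Psi q = f_0 - q_t$, but with the elliptic regularity now taken in the half-integer scale (so that $H^{4.5}$ data produces an $H^{6.5}$ solution), and noting that $\|q_t\|_{4.5}^2 \le \|q_t\|_5^2$, delivers the desired bound $\|q\|_{6.5}^2\lesssim \mathcal{D}+\epsilon e^{-\gamma t}\mathcal{E}$. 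The polar-coordinate splitting and the cut-off argument based on $\mu$ and $1-\mu$, introduced at the end of the proof of Lemma \ref{lm:1}, handle the passage from tangential-derivative control to full radial-and-tangential derivative control.

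The main obstacle is the half-integer elliptic estimate for $q$ itself: at order $6.5$ the coefficient $\mathcal{A}$ is only in $H^{5}$ (not $H^{5.5}$) with the uniform-in-time constant, and at top order its norm is only bounded by $\sqrt{\mathcal{E}/\chi}$. Schematically, the worst commutator term in an $H^{6.5}$-estimate has the form $(\bp^{5}\mathcal{A})\cdot(\bp\cdot D q)$, which must be estimated by a product of a coefficient norm of order $\sqrt{\mathcal{E}/\chi}$ and a low-order norm of $q$; this product is made tame precisely by invoking Lemma \ref{lm:useful} together with the $E_\beta$-controlled decay $\|q\|_4^2\lesssim \epsilon e^{-\beta t}$, thereby converting the dangerous $1/\chi$ factor into the admissible $\epsilon e^{-\gamma t}$ prefactor on the right-hand side of~(\ref{eq:este}).
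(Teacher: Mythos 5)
Your overall roadmap — bootstrap elliptic estimates on the time-differentiated heat equations~(\ref{heatss}) and convert the dangerous $1/\chi$ factors into decaying ones via Lemma~\ref{lm:useful} and the $E_\beta$-control — is exactly the right idea, and most of the coefficient-versus-decay trade-offs you describe match the paper's. However, there are two genuine departures worth flagging, plus one imprecision.

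\textbf{Ordering of the bootstrap.} The paper goes $q\to q_t\to q_{tt}$ at each Sobolev level, whereas you propose $q_{tt}\to q_t\to q$. The paper's order is not incidental: the forcing $f_1$ contains $A^j_i(\partial_t A^k_i\,q,_k),_j$ (so $D^2 q$), and $f_2$ contains both $D^2 q$ and $D^2 q_t$. With the paper's order, by the time $f_1$ is estimated in $H^3$ one already has $\| q\|_5^2\lesssim \D+\epsilon e^{-\gamma t}\E$ from an earlier step, so the needed spatial-derivative control comes with a $\D$-term. In your order, when estimating $\| f_1-q_{tt}\|_3$ you only have $\|q\|_5\lesssim\sqrt{\E}$ from Lemma~\ref{lm:1}, with no $\D$-part; to make the $\partial_t A$ factor absorb the $\sqrt{\E/\chi}$ loss you would need a noticeably finer interpolation than you indicate. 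The reverse order is not clearly impossible, but it requires extra trades exactly where the paper's order avoids them, and you do not carry these out.

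\textbf{The $H^{6.5}$ estimate.} You propose a direct half-integer elliptic estimate that sends $H^{4.5}$ data to an $H^{6.5}$ solution for $-\Delta_\Psi q=f_0-q_t$. The paper deliberately avoids fractional-order elliptic estimates with Sobolev-class coefficients: Lemma~\ref{sobolevA} is stated only for integer $k$. Instead, it proves integer bounds~(\ref{ss741}) and~(\ref{ss742}) (for $\| q\|_6$ and $\| q\|_7$) using the commutator inequality~(\ref{ss709}) together with the polar-coordinate reconstruction~(\ref{ss710}), and then \emph{interpolates} between them. Your half-integer route would require developing a fractional analogue of Lemma~\ref{sobolevA} — a nontrivial additional task which you identify as ``the main obstacle'' but do not resolve. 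The interpolation approach sidesteps it entirely and is the one actually used.

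\textbf{Step 1.} The statement ``read off from the definition of $\D$'' glosses over an essential step. Near the boundary $\D$ only controls tangential derivatives $\t^a\partial_t^b(\cdot)$. The paper's Step~1 obtains the full $H^1$ control $\sum_{l\le 3}\|\partial_t^l q\|_1^2\lesssim\D$ by differentiating the identity $\nabla q=v\cdot\nabla\Psi$ and inserting the $v$-norms appearing in $\D$, invoking Lemma~\ref{lm:useful} to kill the $1/\chi$ weight on $h_{tt},h_{ttt}$. Without this, the elliptic bootstrap lacks its seed: you need $\|q_{ttt}\|_0$ (and in fact $\|q_{ttt}\|_1$) bounded by $\D$ before any application of elliptic regularity on (\ref{heatss}c), and this does not follow from the tangential expressions you write down.
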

\begin{corollary}\label{co:eq}
 With the  bootstrap assumptions~(\ref{eq:assumption1}),~(\ref{eq:assumption2}) and a sufficiently small $\epsilon>0$,
  \[
 \|v\|_{5.5}^2 + |h_t|^2_5\lesssim  \epsilon e^{-\gamma t}\E+\D
 \]
 with $\gamma=\beta / 2 - \eta$ as defined in Lemma~\ref{lm:useful}.
\end{corollary}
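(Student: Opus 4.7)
The plan is to deduce Corollary~\ref{co:eq} directly from Lemma~\ref{lm:2} by reducing the two quantities $\|v\|_{5.5}$ and $|h_t|_5$ to objects already controlled there, using only the constitutive relation~(\ref{eq:ALEv}) for $v$, the kinematic boundary condition~(\ref{eq:ALEneumann2}) for $h_t$, and the a priori bounds assembled in Lemmas~\ref{lm:useful}, \ref{lm:basic}, and~\ref{lm:basic2}.

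First I would bound $\|v\|_{5.5}^2$. Writing $v = -A\,\nabla q$ from~(\ref{eq:ALEv}) and applying a Moser-type product inequality (valid since $s = 5.5 > d/2 = 1$) yields
\[
\|v\|_{5.5} \lesssim \|A\|_{L^\infty}\,\|\nabla q\|_{5.5} + \|A - \operatorname{Id}\|_{5.5}\,\|\nabla q\|_{L^\infty}.
\]
The first piece is controlled by $\|q\|_{6.5}^2$ thanks to Lemma~\ref{lm:basic2}, and Lemma~\ref{lm:2} immediately delivers the desired $\epsilon e^{-\gamma t}\E + \D$ bound. For the second piece, the harmonic-extension estimate~(\ref{DM}) combined with the identity $A - \operatorname{Id} = A\,(\operatorname{Id} - D\Psi)$ shows that $\|A - \operatorname{Id}\|_{5.5}^2 \lesssim |h|_6^2 + \text{l.o.t.}$, and the definition of $\E$ produces $|h|_6^2 \lesssim \E/\chi$. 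Sobolev embedding together with the definition of $E_\beta$ gives $\|\nabla q\|_{L^\infty}^2 \lesssim \|q\|_3^2 \le \|q\|_4^2 \lesssim E_\beta\,e^{-\beta t}$. Multiplying and invoking Lemma~\ref{lm:useful} in the form $E_\beta\,e^{-\beta t}/\chi \lesssim \epsilon e^{-\gamma t}$ leaves exactly the advertised $\epsilon e^{-\gamma t}\E$ term.

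Next I would handle $|h_t|_5$. From~(\ref{eq:ALEneumann2}), $h_t = v \cdot N - (1+h)^{-1}\,h_\theta\,v \cdot \tau$. The trace theorem on $\Gamma$ gives $|v \cdot N|_5 \lesssim |v|_5 \lesssim \|v\|_{5.5}$, so this term inherits the estimate just obtained. A standard one-dimensional product estimate on the circle together with the smallness $|h|_{4.5} \lesssim \sqrt{\epsilon}$ from Lemma~\ref{lm:basic} (to control $(1+h)^{-1}$ as a Moser-type multiplier) and the $L^\infty$ smallness of $v$ coming from Sobolev embedding and the $E_\beta$ decay reduces the nonlinear contribution to a $\sqrt{\epsilon}$-multiple of $\|v\|_{5.5}^2 + |h|_6^2\,\|v\|_{L^\infty}^2$, which is then absorbed by the previous step and by another application of Lemma~\ref{lm:useful}.

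The main difficulty lies in the second term of the Moser splitting for $\|v\|_{5.5}$. Distributing the top five-plus derivatives onto $A$ unavoidably invokes the top-order boundary quantity $|h|_6$, which is bounded by $\E$ only through the degenerating weight $1/\chi(t)$; in particular, $\|A - \operatorname{Id}\|_{5.5}$ is not uniformly controlled in time. The exponential decay of the low-order norm $E_\beta$, precisely calibrated against the lower bound for $\chi$ supplied by the bootstrap assumption~(\ref{eq:assumption2}), is exactly what Lemma~\ref{lm:useful} arranges, and this cancellation is the mechanism that turns an apparently borderline estimate into the exponentially small prefactor $\epsilon e^{-\gamma t}$ appearing on the right-hand side.
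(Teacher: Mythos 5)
Your argument is correct and follows essentially the same route as the paper: both decompose $v = -\nabla q - (A - \operatorname{Id})\nabla q$, bound the leading piece via $\|q\|_{6.5}$ and Lemma~\ref{lm:2}, and control the commutator term $\|A-\operatorname{Id}\|\,\|\nabla q\|_{L^\infty}$ by pairing the $1/\chi$ degeneracy in $|h|_6^2 \lesssim \E/\chi$ against the exponential decay $\|\nabla q\|_{L^\infty}^2 \lesssim E_\beta e^{-\beta t}$ via the squared form of Lemma~\ref{lm:useful}, and likewise for the kinematic equation giving $|h_t|_5$. The only cosmetic difference is that you apply a tame product estimate directly at fractional order $5.5$, whereas the paper derives integer-order bounds at $H^5$ and $H^6$ and then uses linear interpolation.
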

\begin{proof}[Proof of Corollary \ref{co:eq}]
We write (\ref{eq:ALEv}) as
$$
v = Dq \cdot ( \operatorname{Id} -A) - Dq \,.
$$
Using the basic estimate from Lemma \ref{lm:basic2}, we see that
\begin{align*} 
\|v \|_5 & \lesssim (1+ \sqrt{ \epsilon }) \| q\|_6  + E_\beta^ {\frac{1}{2}} e^{-\beta t / 2}  \|\Psi-e\|_6 \,, \\
\|v \|_6 & \lesssim (1+ \sqrt{ \epsilon }) \| q\|_7  + E_\beta^ {\frac{1}{2}} e^{-\beta t / 2} \|\Psi-e\|_7 \,,
\end{align*} 
so that an application of linear interpolation (see, for example, Theorem 7.17 in Adams \cite{Ad}) provides the
inequality
$$
\|v \|^2_{5.5}  \lesssim (1+ \sqrt{ \epsilon }) \| q\|_{6.5}^2  + E_\beta  \|\Psi-e\|^2_{6.5} \,.
$$
Using Lemmas \ref{lm:useful} and \ref{lm:2}, it follows that
\begin{align*} 
\|v \|^2_{5.5} &  \lesssim (1+ \sqrt{ \epsilon }) \| q\|_{6.5}^2  + \frac{E_\beta(t) e^{-\beta t / 2} }{\chi(t)} \, \chi(t) \|\Psi-e\|^2_{6.5} \\
&  \lesssim (1+ \sqrt{ \epsilon }) \| q\|_{6.5}^2  + \frac{E_\beta(t) e^{-\beta t / 2} }{\chi(t)} \, \mathcal{E} (t) \\
& \lesssim  \epsilon ^2 e^{-\gamma t} \mathcal{E} + \mathcal{D}  \,.
\end{align*} 

Next, using the formula (\ref{eq:velformula}), we see that
$$
|h_t|^2_5 \lesssim \chi(t) |h|^2_6 \, \frac{|v|^2_{2.5}}{\chi(t)} + \epsilon |v|_5^2
$$
which once again, thanks to Lemmas \ref{lm:useful} and \ref{lm:2}, is bounded by a constant multiple of $\epsilon ^2 e^{-\gamma t} \mathcal{E} + \mathcal{D}$.
\end{proof}

%
\begin{proof}[Proof of Lemma \ref{lm:2}]
{\em Step 1: $H^1$ estimates for $\partial_t^l q$.}  We make use of the identity $ \nabla q = v \cdot \nabla \Psi$.   It follows that
\begin{align*}
  \nabla q_t  & = v_t \cdot  \nabla \Psi + v\cdot\nabla \Psi_t \\
  \nabla q_{tt}  & = v_{tt} \cdot  \nabla \Psi + 2 v_t \cdot\nabla \Psi_t  +  v \cdot\nabla \Psi_{tt} \\
  \nabla q_{ttt}  & = v_{ttt} \cdot  \nabla \Psi + 3 v_{tt} \cdot\nabla \Psi_t  + 3 v_{t} \cdot\nabla \Psi_{tt}  +  v \cdot\nabla \Psi_{ttt}     \,.
\end{align*} 
Employing H\"{o}lder's inequality and the Sobolev embedding theorem, 
\begin{align*} 
\|  \nabla q_{ttt}\|^2_0   &  \lessim \|v_{ttt}\|^2_0\  |h|^2_2 +  \|v_{tt}\|^2_0\  |h_t|^2_2 + \frac{ \|v_{t}\|^2_2}{{\chi}}\  |\sqrt{\chi} h_{tt}|^2_0 +\frac{ \|v\|^2_2}{{\chi}} \ |\sqrt{\chi}h_{ttt}|^2_0 
\lessim \D
\end{align*} 
where we have used Lemma \ref{eq:useful} for the last inequality.  We have similar estimates for $ q_{tt}$, $q_t$, and $q$ so that
\begin{equation}\label{ss711}
\sum_{l=0}^3 \| \p_t^l q\|_1^2 \lessim \D.
\end{equation} 

\vspace{.1 in}
\noindent
{\em Step 2. $H^3$ estimate for $q_{tt}$.} Just as in the proof of Corollary \ref{co:eq}, we see that as a consequence of Lemma \ref{lm:1},
\begin{equation}\label{ss725}
\sum_{l=0}^2 \| \p_t^l v\|_{5-2l} \lessim \E \,.
\end{equation} 
Returning to the equation (\ref{heatss}a), we estimate $-\Psi_t \cdot v - q_t$ in $ H^1(\Omega) $.
By the Sobolev embedding theorem together with Lemmas \ref{lm:useful0} and \ref{lm:useful},  
\begin{equation}\label{ss722} \| \Psi_t  \|_{W^{1, \infty }} \lessim \|\Psi_t \|_3 \lessim \sqrt{ \epsilon } e^{-\gamma t/2} \,,
\end{equation} 
so that together with  (\ref{ss725}),  $\| \Psi_t \cdot v \|_1^2 \lessim \epsilon e^{\gamma t} \E$. Then, with
with (\ref{ss711}),
\begin{equation}\label{ss712} \| q \|_3^2 \lessim \epsilon e^{-\gamma t} \E + \D \,. \end{equation} 

Next, we return to (\ref{heatss}b) and estimate $f_1 - q_{tt}$ in $ H^1(\Omega) $.   By Lemma \ref{lm:useful0}, $\| \Psi_t \cdot v_t \|_1^2 \lessim \epsilon e^{-\gamma t} \E$,
while $\| \Psi_{tt} \cdot v\|_1 ^2 \lessim \frac{\E e^{-\beta t}}{\chi} E_\beta \lessim  \epsilon e^{-\gamma t} \E $.
The estimates (\ref{ss711}) and  (\ref{ss712}) then show that $\|  f_1 - q_{tt} \|_1^2 \lessim  \epsilon e^{-\gamma t} \E + \D$ so that
\[
 \| q_t \|_3^2 \lessim \epsilon e^{-\gamma t} \E + \D \,. 
\]
A similar estimate then shows that $\|  f_2 - q_{ttt} \|_1^2 \lessim  \epsilon e^{-\gamma t} \E + \D$ so that from (\ref{heatss}c), 
\[
 \| q_{tt} \|_3^2 \lessim \epsilon e^{-\gamma t} \E + \D \,. 
\]

\vspace{.1 in}
\noindent
{\em Step 3. $H^5$ estimate for $q_{t}$.}  From (\ref{ss725}) and (\ref{ss722}), we see that $\| \Psi_t \cdot v\|_3^2  \lessim \epsilon e^{-\gamma t} \E + \D$, so that
with Lemmas \ref{lm:basic2} and \ref{sobolevA}, we have that
\[
 \| q \|_5^2 \lessim \epsilon e^{-\gamma t} \E + \D \,. 
\]
This, in turn, ensures that $\|  f_1 - q_{tt} \|_3^2 \lessim  \epsilon e^{-\gamma t} \E + \D$ 
so that
\[
 \| q_t \|_5^2 \lessim \epsilon e^{-\gamma t} \E + \D \,. 
\]

\vspace{.1 in}
\noindent
{\em Step 4. $H^{6.5}$ estimate for $q$.}  We first look at the estimate (\ref{ss709}) with $ \alpha =5$.   We find that
\begin{align} 
 \| \bar \p^ 5 q \|_1 &   \lessim \| \Psi_t\cdot v\| _4 + \| q_t\|_4
+ \sum_{0<\beta\le 5} \Big\| (\bp^\beta \mathcal{A} ^{ij})(\bp^{5-\beta} q),_j\Big\|_0 \nonumber \\
& \ \ +  \sum_{0\le \beta < 5}  \Big\| \bp^\beta \Big(A^j_i,_j A^k_i\Big) \bp^{\alpha-\beta} q,_j \Big\|_0 
+  \Big\| \bp^{ 4} \Big(A^j_i,_j A^k_i\Big)  \bp q,_j \Big\|_0 \label{ss740}
\end{align} 
For the first term on the right-hand side, we note that with the Sobolev embedding theorem and Lemma~\ref{lm:useful}, 
\begin{align*} 
\| \Psi_t \cdot v \|_k & \lessim \|\sqrt{\chi} \Psi_t\|_k\frac{ \| v\|_3}{\sqrt{\chi}} +  \|\Psi_t\|_3 \|v\|_k \\
& \lessim  \sqrt{ \epsilon } e^{ - \gamma t/2}( \|\sqrt{\chi} \Psi_t\|_k+   \|v\|_k)
 \qquad k=4,5 \,.
\end{align*} 
Using the estimate (\ref{ss701}), we see that
\begin{align*}
\sum_{0<\beta\le 5} \Big\| (\bp^\beta \mathcal{A} ^{ij})(\bp^{5-\beta} q),_j\Big\|_0 \lessim \sqrt{ \epsilon } 
e^{ -\gamma t/2}( \|\sqrt{\chi}(\Psi-e)\|_6+   \|q\|_5)
 \end{align*} 
 The last two term on the right-hand side of (\ref{ss740}) are estimated in the same way so that
 \begin{align*} 
  \| \bar \p^ 5 q \|_1 \lessim  \sqrt{ \epsilon } e^{ -\gamma t/2}( \|\sqrt{\chi} \Psi_t\|_4 + \|\sqrt{\chi} (\Psi-e)\|_6+   
  \|v\|_4 + \| q_t\|_4) \,.
\end{align*} 
Using the formula (\ref{ss710}), we find that 
 \begin{align} 
  \| q \|_6 \lessim  \sqrt{ \epsilon } e^{- \gamma t/2}( \|\sqrt{\chi} \Psi_t\|_4 + \|\sqrt{\chi} (\Psi-e)\|_6+   
  \|v\|_4 + \| q_t\|_4)+\|q_t\|_4 \,. \label{ss741}
\end{align} 
The identical procedure with $ \alpha =6$ then yields
 \begin{align} 
  \| q \|_7 \lessim  \sqrt{ \epsilon } e^{ -\gamma t/2}( \|\sqrt{\chi} \Psi_t\|_5 + \|\sqrt{\chi} (\Psi-e)\|_7+   \|v\|_5 + \| q_t\|_5)
  +\|q_t\|_5\,. \label{ss742}
\end{align} 
Linear interpolation between (\ref{ss741}) and (\ref{ss742}), we have that
\begin{align*} 
  \|  q \|_{6.5}&  \lessim  \sqrt{ \epsilon } e^{ -\gamma t/2}( \|\sqrt{\chi} \Psi_t\|_{4.5} + \|\sqrt{\chi} (\Psi-e)\|_{6.5}+   
  \|v\|_{4.5} + \| q_t\|_{4.5})  +\|q_t\|_{4.5} \\
& \lessim \epsilon e^{-\gamma t/2} \E^{1/2} + \D^{1/2} \,.
\end{align*} 
\end{proof}

\subsection{Lower bound on $\chi(t)$}
The heat equation (\ref{eq:ALEheat})  for $q$ can be rewritten as
\begin{subequations}
\label{eq:qparabolic}
 \begin{alignat}{2}
q_t-a_{kj}q,_{kj}-b_kq,_k&=0&&\,\text{ in }\,\Omega,\label{eq:parabolic}\\
q&=0&&\,\text{ on }\,\Gamma,\label{eq:dirichletc}\\
q(0,\cdot)&=&&q_0>0\,\text{ in }\,\Omega
\end{alignat}
\end{subequations}
where the coefficient matrix $a=(a_{kj})_{k,j=1,2}$, and the vector $b=(b_1,b_2)$ are explicitly given by:
\be\label{eq:coeff}
a_{kj}:=A^k_iA^j_i;\quad b_k:=A^k_{i,j}A^j_i+A^k_i\Psi^i_t.
\ee
We first quote a theorem from~\cite{Od}, that will play an important role in producing quantitative bounds
from below for $\chi(t)$.
\begin{lemma}[Oddson's Theorem 2 in \cite{Od}]\label{lm:oddson}
Let $q\in C^{1,2}(\Omega)$ be a supersolution to~(\ref{eq:qparabolic}) in the unit disc $\Omega=B_1({\bf 0})$, and
let $0<\alpha\leq\frac{1}{2}$ be the normalized ellipticity constant satisfying
\[
a_{jk}\xi_j\xi_k\geq\alpha\big(a_{11}+a_{22}\big)|\xi|^2
\]
for any real vector $\xi=(\xi_1,\xi_2)$.
Moreover, let us introduce the quantities
\[
k_0(T):=\inf_{\Omega\times[0,T]}\frac{1}{a_{11}+a_{22}},
\quad
\beta(T):=\sup_{\Omega\times[0,T]}b\cdot x.
\]
Let $J_{\mu}$ denote the Bessel function of the first kind of order $\mu$ and $\xi_0$ its first positive zero.
If we define 
\[
\mu=\frac{\beta+1}{2\alpha}-1,\quad \lambda=\frac{\alpha\xi_0^2}{k_0},
\] 
then there exists a positive constant $m$ satisfying 
\[
q(t,x)\geq m\rho e^{-\lambda t},
\]
in $B_1(0)\times[\sigma ,\infty[$,
where $\rho$ stands for the distance from $x$ to the boundary $\Gamma$
and $ \sigma $ is an arbitrary small time.
\end{lemma}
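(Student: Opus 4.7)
The plan is to construct an explicit radial subsolution (barrier) of the parabolic operator
\[
L := \partial_t - a_{kj}\partial_{kj} - b_k\partial_k
\]
and invoke the weak maximum (comparison) principle on the cylinder $B_1(0)\times[\sigma,\infty)$. Concretely, I would set
\[
\phi(t,x) := m\, r^{-\mu} J_\mu(\xi_0 r)\, e^{-\lambda t},\qquad r=|x|,
\]
and show $L\phi\le 0$ with $\mu,\lambda$ as in the statement. Because $J_\mu(\xi_0)=0$, $\phi$ vanishes on $\Gamma$, matching the Dirichlet condition. The Bessel function enters naturally: when $\mu=0$ the function $J_0(\xi_0 r)e^{-\xi_0^2 t}$ is the first Dirichlet heat eigenmode on the disc, and the shift $\mu>0$ compensates for the drift term $b_k\partial_k$.

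The first step is to rewrite $L$ in polar coordinates when applied to a radial profile $\psi(r)e^{-\lambda t}$:
\[
L\phi = e^{-\lambda t}\Bigl[-\lambda\psi - (a_{kj}\hat x_k\hat x_j)\psi'' - r^{-1}(a_{kk}-a_{kj}\hat x_k\hat x_j)\psi' - (b\cdot\hat x)\psi'\Bigr].
\]
By the normalized ellipticity hypothesis, $a_{kj}\hat x_k\hat x_j\ge \alpha(a_{11}+a_{22})$ and $a_{kk}-a_{kj}\hat x_k\hat x_j\le (1-\alpha)(a_{11}+a_{22})$; moreover $b\cdot\hat x\le\beta/r$. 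Next, for $\psi(r)=r^{-\mu}J_\mu(\xi_0 r)$, the Bessel identity
\[
\psi''(r) + \frac{2\mu+1}{r}\psi'(r) + \xi_0^2\,\psi(r) = 0
\]
lets me rewrite $-\psi''$ in terms of lower-order data. Substituting and factoring $(a_{11}+a_{22})$ out, the condition $L\phi\le 0$ reduces, after elementary algebra, to the inequality
\[
\frac{\lambda}{a_{11}+a_{22}} \le \alpha\,\xi_0^2 + \text{(terms that are non-positive exactly when } 2\alpha(\mu+1)\ge \beta+1\text{)}.
\]
The defining choices $\mu=(\beta+1)/(2\alpha)-1$ and $\lambda=\alpha\xi_0^2/k_0$ are precisely those that make these terms vanish and the ellipticity bound saturate.

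To close the comparison argument, I need $\phi\le q$ on the parabolic boundary of $B_1(0)\times[\sigma,\infty)$. On $\Gamma$ both vanish. At time $t=\sigma$, strict positivity of $q_0$ together with a classical Hopf lemma applied to the solution of (\ref{eq:qparabolic}) on $[0,\sigma]$ gives $q(\sigma,x)\ge c\rho(x)$ for some $c>0$; since $r^{-\mu}J_\mu(\xi_0 r)\asymp c_\mu(1-r)=c_\mu\rho$ as $r\to 1^{-}$ and is smooth and positive on $[0,1)$, choosing $m>0$ sufficiently small makes $\phi(\sigma,\cdot)\le q(\sigma,\cdot)$ throughout $\Omega$. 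The weak maximum principle for $L$ (which applies because $a$ is uniformly elliptic with coefficients controlled by $k_0,\alpha$ and $b$ is bounded) then gives $\phi\le q$ on the full cylinder, which is the asserted bound.

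The main technical obstacle is the algebraic verification in the second paragraph: one must track the precise balance between the elliptic and drift contributions in the radial form of $L\phi$ and confirm that the specified $\mu,\lambda$ indeed produce $L\phi\le 0$ on all of $B_1(0)\setminus\{0\}$, including near the origin where $r^{-\mu}J_\mu(\xi_0 r)\to \xi_0^\mu/(2^\mu\Gamma(\mu+1))$ is smooth but its derivatives behave delicately. A secondary subtlety is that the bound $b\cdot\hat x\le\beta/r$ is sharp only away from the origin, so one should verify directly, or by adjusting $m$ on a small neighborhood of the origin, that the comparison inequality holds uniformly; this is precisely why the statement is restricted to $t\ge\sigma>0$ rather than $t\ge 0$.
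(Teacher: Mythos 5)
Your proposal reconstructs precisely Oddson's argument, which is what the paper cites and sketches in the proof of Corollary~\ref{co:oddson}: the barrier $r^{-\mu}J_\mu(\xi_0 r)e^{-\lambda t}$, the verification that it is a subsolution via the radial/Bessel reduction under the normalized ellipticity hypothesis (Oddson phrases this through Pucci extremal operators), the parabolic Hopf lemma to obtain $q(\sigma,\cdot)\gtrsim\rho$, and the weak comparison principle on $[\sigma,\infty)$. This is the same approach as the paper.
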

\begin{remark}[Optimal decay rate for solutions of the heat equation]\label{re:optimal}
If we set $A=Id$, then problem~(\ref{eq:qparabolic}) turns into 
the initial-boundary value problem for the linear heat equation. 
In this case $k_0=\frac{1}{2}$, $\alpha=1/2$, $\beta\equiv0$,
$\mu=\frac{0+1}{1}-1=0$, and $\lambda=\xi_0^2$, where $\xi_0$ stands for the first positive zero
of $J_{0}(\xi)$. 
In particular, if $q^{\text{heat}}$ denotes the associated solution, then the above lemma implies that
\[
\chi_{\text{heat}}(t):=\inf_{x\in\Gamma}(-\g_Nq^{\text{heat}}(t,x))\gtrsim e^{-\xi_0^2t},
\]
which is the {\em optimal} decay rate in the case of the linear heat equation, as 
the lowest positive eigenvalue of the Dirichlet-Laplacian on the two-dimensional disk corresponds exactly 
to 
\[
\lambda_1=\xi_0^2.
\]
\end{remark}

\begin{corollary}[Lower-bound for $\chi(t)$] \label{co:oddson}
Under the bootstrap assumptions~(\ref{eq:assumption1}) and~(\ref{eq:assumption2}) with
$\epsilon$ small enough,
there exists a universal constant $C>0$ such that
\[
\chi(t)\gtrsim c_1e^{-(\lambda_1+\tilde{\lambda}(t))t},
\]
where $c_1=\int_{\Omega}q_0\varphi_1\,dx$
is the first
coefficient in the eigenfunction expansion of the initial datum $q_0$ with respect to
the $L^2$ ortho-normal basis $\{\varphi_1,\varphi_2,\dots\}$ of the eigenvectors of the Dirichlet-Laplacian
on $B_1(0)$,
i.e $q_0=c_1\varphi_1+c_2\varphi_2+\dots$.
Moreover, $\tilde{\lambda}(t) \ge 0$  satisfying
$
\tilde{\lambda}(t) \le C\epsilon
$
for some positive constant $C$. In particular, with $\epsilon>0$ sufficiently small so that $C\epsilon<\eta/4$, we obtain the improvement
of the  
bootstrap bound~(\ref{eq:assumption2}) given by $\chi(t)\gtrsim c_1e^{-(\lambda_1+\eta/4)t}$.
\end{corollary}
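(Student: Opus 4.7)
The plan is to apply Lemma \ref{lm:oddson} to the parabolic problem \eqref{eq:qparabolic} and then take the normal derivative on $\Gamma$ to extract the lower bound on $\chi(t)$. The two tasks are: (i) show that the variable coefficients $a_{kj}$ and $b_k$ are $O(\epsilon)$-close to the constant-coefficient Laplacian, so that Oddson's parameters $\alpha,k_0,\beta,\lambda$ are near their unperturbed values; and (ii) track the explicit dependence of the prefactor $m$ in Oddson's conclusion on $c_1=\int_\Omega q_0\varphi_1\,dx$.

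For (i), I would combine Lemma \ref{lm:basic2}, the refined bound $\sup_t|h(t)|_{2.5}\lesssim c_1\lesssim\epsilon$ from Lemma \ref{lm:basic}, and the decay $\|\Psi_t\|_3\lesssim\epsilon e^{-\gamma t/2}$ from Lemma \ref{lm:useful0}, together with Sobolev embedding $H^s\hookrightarrow L^\infty$ for $s>1$, to obtain
\[
\|a_{kj}-\delta_{kj}\|_{L^\infty(\Omega)}\lesssim\epsilon,\qquad \|b\|_{L^\infty(\Omega)}\lesssim\epsilon,
\]
uniformly in $t$. Consequently, Oddson's parameters satisfy $\alpha=\tfrac12+O(\epsilon)$, $k_0=\tfrac12+O(\epsilon)$, $\beta\lesssim\epsilon$, so that $\mu=O(\epsilon)$ and, using $\lambda_1=\xi_0^2$ from Remark \ref{re:optimal},
\[
\lambda=\frac{\alpha\xi_0^2}{k_0}=\lambda_1+\tilde\lambda(t),\qquad 0\le\tilde\lambda(t)\le C\epsilon.
\]

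For (ii), I would choose a small $\sigma>0$ and argue as follows. Since $q_0>0$ in $\Omega$ and the coefficients of \eqref{eq:qparabolic} are $O(\epsilon)$-perturbations of the Laplacian, the standard parabolic smoothing estimate on $[0,\sigma]$ together with the strong maximum/Hopf principle yields $q(\sigma,\cdot)>0$ in $\overline\Omega\setminus\Gamma$. The near-linearity of the problem on $[0,\sigma]$ also shows that $\int_\Omega q(\sigma,y)\varphi_1(y)\,dy=c_1(1+O(\epsilon\sigma))$, so that applying Lemma \ref{lm:hardy} to $q(\sigma,\cdot)$ and using the Hopf lemma for $\varphi_1$ gives the pointwise bound
\[
q(\sigma,x)\gtrsim c_1\,\rho(x)\quad\text{in }\Omega,\qquad \rho(x):=\mathrm{dist}(x,\Gamma).
\]
Now run Oddson's barrier construction with initial time $\sigma$: the Bessel-function comparison function of \cite{Od} can be normalized by any multiplicative constant, and comparing with the lower bound $c_1\rho(x)$ at $t=\sigma$ on $\Omega$ and with $0$ on $\Gamma$ for $t\ge\sigma$ produces
\[
q(t,x)\ge c\,c_1\,\rho(x)\,e^{-\lambda(t-\sigma)}\qquad \text{for }t\ge\sigma,
\]
with a universal constant $c>0$. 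Taking the normal derivative on $\Gamma$ and absorbing the $e^{\lambda\sigma}$ into the universal constant yields $-\partial_N q(t,x)\gtrsim c_1 e^{-\lambda t}$ uniformly in $x\in\Gamma$, hence the claim $\chi(t)\gtrsim c_1 e^{-(\lambda_1+\tilde\lambda(t))t}$. Choosing $\epsilon>0$ so that $C\epsilon<\eta/4$ then improves \eqref{eq:assumption2} to $\chi(t)\gtrsim c_1 e^{-(\lambda_1+\eta/4)t}$.

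The main obstacle is the bookkeeping in step (ii): extracting the genuinely linear dependence on $c_1$ from Oddson's argument, rather than a generic positive constant depending on $q_0$. The key inputs are the Hardy-type estimate of Lemma \ref{lm:hardy} (which turns a pointwise positivity statement into a quantitative bound proportional to the $\varphi_1$-projection of the data) and the observation that over the short time $[0,\sigma]$ the projection of $q$ onto $\varphi_1$ is, to leading order, conserved up to the eigenvalue $\lambda_1$. The verification of (i), by contrast, is routine once Lemma \ref{lm:basic2} and the decay estimates of Lemmas \ref{lm:useful0}--\ref{lm:useful} are in hand.
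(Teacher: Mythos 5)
Your step (i) — controlling $\|a-\operatorname{Id}\|_{L^\infty}$ and $\|b\|_{L^\infty}$ by $O(\epsilon)$ and then arguing by continuity of Oddson's parameters $\alpha,k_0,\lambda$ — matches the paper's argument for the bound $|\tilde\lambda(t)|\le C\epsilon$, and is correct.

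Your step (ii), however, has a genuine gap. You invoke Lemma~\ref{lm:hardy} to conclude a pointwise lower bound $q(\sigma,x)\gtrsim c_1\rho(x)$, but Lemma~\ref{lm:hardy} runs in the opposite direction: it says $\|f\|_0^2\le C^*\bigl(\int_\Omega f\varphi_1\bigr)\|f\|_3$, which is an \emph{upper} bound on $\|f\|_0^2$ in terms of the $\varphi_1$-projection (equivalently, a lower bound on $c_1$ in terms of norms of $f$). It does not, and cannot, produce a pointwise lower bound $f\gtrsim c_1\rho$: that inequality forbids $f$ from being concentrated where $\varphi_1$ is large, but it places no floor on how small $f$ can be near the boundary. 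To get $q(\sigma,\cdot)\gtrsim c_1\rho$ you would need a parabolic Harnack inequality connecting the interior infimum of $q$ on $K_\sigma$ to the integral $\int q\varphi_1$, and then Hopf's lemma — a set of tools your write-up does not supply. Moreover, once $\sigma$ is fixed your argument only bounds $\chi(t)$ for $t\ge\sigma$, and it says nothing on $[0,\sigma]$, which is exactly where the Hopf constant $m(\sigma)$ degenerates. The paper closes both issues at once by invoking the \emph{Taylor sign condition}~(\ref{eq:wlog}), namely $-\partial_N q_0\ge C\int_\Omega q_0\varphi_1\,dx=Cc_1$: this is an \emph{assumption} on the initial data, formulated precisely so that the interior-Harnack-plus-Hopf constant does not degenerate as $\sigma\to 0$, and together with continuity of the classical solution up to $t=0$ it gives the uniform lower bound $-\partial_N q(t)\gtrsim c_1$ down to $t=0$. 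Your proposal silently drops (\ref{eq:wlog}), which is the crucial ingredient here, and tries to recover it from Lemma~\ref{lm:hardy}; that substitution does not work.
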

\begin{proof}
The proof of Oddson's Theorem 2 in \cite{Od} (Lemma~\ref{lm:oddson}) relies on the construction of a comparison function of the form
$v(t,r)=r^{-\mu}J_{\mu}(\xi_0r)e^{-\lambda t}$, where $\lambda,\mu,\xi_0$ are given in the statement of Lemma~\ref{lm:oddson},
$J_{\mu}$ is a Bessel function of the first kind and $r=|x|$ is the radial coordinate.
The first property of $v$ which is important for the proof is that $v$ vanishes at 
the spatial boundary $\Gamma$ and approaches it 
like $c(1-r)e^{-\lambda t}$
as $r\to1$. This is a consequence of the fact that 
$\lim_{r\to1}\frac{J_{\mu}(\xi_0r)}{r^{\mu}(1-r)}=c$ for some constant $c>0$, a well known property of Bessel functions.
The second important property is that $v$ is a subsolution for~(\ref{eq:qparabolic}) (and it is constructed with the help of 
maximal Pucci operators as explained in detail in~\cite{Od}).

The goal is to prove that for any arbitrarily small time $\sigma>0$ there exists a strictly positive constant $\delta(\sigma)>0$ such that
$q-\delta v $ is a {\em positive} supersolution to the parabolic problem~(\ref{eq:qparabolic}) on the time interval $[\sigma,\infty[$.
The desired lower bound for $q$ then follows from 
 the weak maximum principle.

Since $v$ is a subsolution, it follows that for any $\delta>0$, $q-\delta v$ is a supersolution.  The positivity of  $q-\delta v$ at $t= \sigma $ 
follows from the parabolic Hopf lemma, from which we infer the existence of a
 constant $\delta( \sigma )$ such that $\frac{q}{v}>\delta( \sigma )$ uniformly over
$\bar{\Omega}$. Note that we  have used the fact that $v(\sigma,r)$ behaves like $C(1-r)$ near the boundary $\Gamma$  for some positive constant $C$.
Therefore it follows that the constant $m$ in the statement of Lemma~\ref{lm:oddson} {\it a priori} depends on the time $\sigma>0$,
and moreover, $m$ is proportional to the lower bound for $-\partial q/ \partial N|_{t=\sigma}$ on $\Gamma$.

From the proof of the parabolic Hopf lemma (see for instance Theorem 3.14 in~\cite{Fr64}), the 
value $-\partial q/ \partial N|_{t=\sigma}$ 
is proportional to the minimal value of the temperature $q$ on a space-time region
of the form
$K_{\sigma}:=B_{1-C\sigma}\times [\sigma/2,3\sigma/2]$,
divided by $\sigma$ (which is roughly the distance of $K_{\sigma}$ from the parabolic boundary of $\Omega\times[0,2\sigma]$).
Note that, unlike the elliptic case, we are forced to take into account the time-dependence of the solution
and in particular the region $K_{\sigma}$ cannot be chosen uniformly for all times, but only for times greater or equal some arbitrarily
small $\sigma>0$.
However, our solution is continuous all the way to $t=0$ and we do nevertheless obtain a lower bound for all times due to
the Taylor sign condition; namely, due to (\ref{eq:wlog}),
\[
-\g_Nq_0=\frac{-\g_Nq_0}{c_1}c_1\gtrsim c_1.
\]
Note however that if we define the dimensionless quantity $L=(-\g_Nq_0)/c_1>0$ and assume no universal bound on $L$ from below, 
the only modification in the statement of the main theorem will be that the smallness condition on initial data~(\ref{eq:smallassum}) 
will additionally depend on $L$.

As to the bound on $\tilde{\lambda}$, note that the exponent 
$\lambda=\lambda((a_{ij}),(b_i))$ depends on the coefficients $(a_{ij})_{i,j=1,2}$ and $(b_i)_{i=1,2}$
through the relationship $\lambda=\alpha\xi_0^2/k_0$.
Since $k_0$ and $\xi_0$ vary continuously as the coefficients are varied,
it proves that $\lambda$ depends continuously on the coefficients $a_{ij}, b_i$ of the parabolic operator.
On the other hand, by Remark~\ref{re:optimal}
it follows $\lambda|_{a_{ij}=\delta_{ij},b_i=0}=\lambda_1$.
As a consequence
\[
|\tilde{\lambda}(t)|=|\lambda(t)-\lambda_1|\leq C(\|A-\text{Id}\|_{L^{\infty}},\|b\|_{L^{\infty}})
=O(\|D^2(\Psi-e)\|_{L^{\infty}},\|\Psi_t\|_{L^{\infty}}).
\]
\end{proof}

\section{Energy identity and the higher-order energy estimate}\label{se:long}

\subsection{The energy identity}

Much of our analysis is founded on basic  higher-order energy identities for
the classical Stefan problem. These identities provide the geometrical control
of the evolving phase boundary, which in turn controls the decay of the temperature function;
moreover, these identities  
explain our definition of  the higher-order energy function $\E$ and the  dissipation function $\D$.

\begin{proposition}[Energy identity]\label{lm:lemma1} With $R=1+h$ and $R_J=RJ^{-1}$, sufficiently smooth solutions to the classical Stefan problem satisfy
\begin{align} 
\frac{d}{dt} \mathcal{E} (t) + \mathcal{D} (t) &  =\sum_{j=0}^3\left(
\int_{\Gamma}(-\g_Nq_t)R_J^2|\t^{6-2j}\g_t^jh|^2
+\int_{\Omega}(\mathcal{R}_j
+\tilde{\mathcal{R}}_j)
+\int_{\Gamma}\mathcal{G}_j \right) 
\nonumber \\
& \qquad\qquad + \sum_{j=1}^3 \left(\int_{\Omega}(\mathcal{S}_j+\tilde{\mathcal{S}}_j)
+\int_{\Gamma}\mathcal{H}_j\right)
\,, \label{eq:enidentity}
\end{align} 
where the error terms $\mathcal{R}_j$, 
$\tilde{\mathcal{R}}_j$, $\mathcal{S} _j$, $\tilde {\mathcal{S} }_j$, 
$\mathcal{G}_j$, and $ \mathcal{H} _j$ are given by~(\ref{eq:remainderj}), ~(\ref{eq:remaindergammaj}), (\ref{eq:sremainder}),  ~(\ref{eq:sjtilde}),
~(\ref{eq:rjtilde}), and  ~(\ref{eq:sremaindergamma}), respectively.
\end{proposition}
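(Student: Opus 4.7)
The plan is to derive \eqref{eq:enidentity} from a family of localized energy identities, one for each pair $(a,b)$ and $(\vec\alpha,b)$ appearing in the definitions of $\mathcal{E}$ and $\mathcal{D}$. The two basic manipulations are (i) differentiating the parabolic equation \eqref{eq:ALEheat} by $\bar\partial^a\partial_t^b$ near $\Gamma$ (using the cut-off $\mu$) or by $\partial^{\vec\alpha}\partial_t^b$ in the interior (using $1-\mu$); and (ii) multiplying the resulting identity by the natural test function $\bar\partial^a\partial_t^b q + \bar\partial^a\partial_t^b\Psi\cdot v$ and integrating over $\Omega$. The structure of the test function is dictated by the fact that $v^i = -A^k_i q,_k$, so that after integration by parts the principal part of the elliptic operator produces precisely the coercive bulk quantity $\|\mu^{1/2}\bar\partial^a\partial_t^b v\|_{L^2}^2$ which feeds the dissipation.

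First, I would apply $\bar\partial^a\partial_t^b$ to \eqref{eq:ALEheat} in divergence form, $q_t-(A^j_i A^k_i q,_k),_j + A^j_i,_j A^k_i q,_k = -v\cdot\Psi_t$, using the commutator identities \eqref{eq:comm} to separate the principal part from the lower-order contributions. Testing against $\mu(\bar\partial^a\partial_t^b q+\bar\partial^a\partial_t^b\Psi\cdot v)$ and integrating by parts, the $\partial_t q$-term yields $\tfrac{1}{2}\tfrac{d}{dt}\|\mu^{1/2}(\bar\partial^a\partial_t^b q+\bar\partial^a\partial_t^b\Psi\cdot v)\|_{L^2}^2$ modulo terms in which $\partial_t$ falls on $\bar\partial^a\partial_t^b\Psi\cdot v$, which get shifted into the error functionals $\mathcal{S}_j$ and $\tilde{\mathcal{S}}_j$; the elliptic term generates the dissipative piece $\|\mu^{1/2}\bar\partial^a\partial_t^b v\|_{L^2}^2$ together with commutator errors that are collected into $\mathcal{R}_j$ and $\tilde{\mathcal{R}}_j$. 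The analogous computation with $\partial^{\vec\alpha}$ in place of $\bar\partial^a$ and $1-\mu$ in place of $\mu$ reproduces the interior (non-weighted) contributions.

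The geometric heart of the argument is the boundary analysis. Since $q=0$ on $\Gamma$ and $\bar\partial$ is tangential, the trace $\bar\partial^a\partial_t^b q|_\Gamma$ vanishes, so the boundary term produced by the integration by parts reduces to the product of the conormal derivative with $\bar\partial^a\partial_t^b\Psi\cdot v|_\Gamma$. Using \eqref{eq:ntilde}, the conormal derivative of $q$ is $(-\partial_N q)/R_J$ up to geometric factors; using \eqref{eq:Psi_laplace}, $\Psi=(1+h)N$ on $\Gamma$, we replace $\bar\partial^a\partial_t^b\Psi|_\Gamma$ by $\bar\partial^a\partial_t^b h\,N$ modulo lower-order tangential remainders; and $v\cdot N|_\Gamma=(-\partial_N q)/R_J$ completes the square, producing the weighted boundary form $\int_\Gamma(-\partial_N q)R_J^2|\bar\partial^{6-2j}\partial_t^j h|^2\,d\theta$ whose time integral assembles the corresponding summand of $\mathcal{E}$. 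Applying the Leibniz rule to this weighted boundary norm generates exactly the term $\int_\Gamma(-\partial_N q_t)R_J^2|\bar\partial^{6-2j}\partial_t^j h|^2$ which appears on the right-hand side of \eqref{eq:enidentity}, while boundary commutators and the contributions from differentiating $R_J$ are bundled into $\mathcal{G}_j$ and $\mathcal{H}_j$.

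The main obstacle is bookkeeping. The difficulty is not any single estimate but the precise identification of the principal parts: one must track which commutator pieces arising from \eqref{eq:comm}, from the divergence form of $\Delta_\Psi$, from time-differentiation of $A$, and from the kinematic identity \eqref{eq:ALEneumann2} properly combine to reproduce the quadratic form $(-\partial_N q)R_J^2|\bar\partial^{6-2j}\partial_t^j h|^2$, and which must be deferred to the residuals $\mathcal{R}_j,\tilde{\mathcal{R}}_j,\mathcal{S}_j,\tilde{\mathcal{S}}_j,\mathcal{G}_j,\mathcal{H}_j$. The ``bad'' term $\int_\Gamma(-\partial_N q_t)R_J^2|\bar\partial^{6-2j}\partial_t^j h|^2$ is deliberately kept on the right-hand side, since, as discussed in the introduction, it is of the same order as $\mathcal{E}$ and cannot be absorbed into $\mathcal{E}$ or $\mathcal{D}$ until the sign argument of Lemma~\ref{lm:positive} is invoked for $t\ge T_K$.
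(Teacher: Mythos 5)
Your proposal diverges from the paper's actual derivation in an interesting but ultimately minor way: the paper (Appendix~\ref{se:derivation}) applies $\bar\partial^a\partial_t^b$ to the \emph{velocity relation} (\ref{eq:ALEv}), $v^i + A^k_i q,_k = 0$, and tests against $\mu\,\bar\partial^a\partial_t^b v^i$, invoking the heat equation only once in the interior to rewrite $A^k_i v^i,_k = -(q_t + v\cdot w)$; you instead apply $\bar\partial^a\partial_t^b$ to the heat equation (\ref{eq:ALEheat}) and test against $\mu(\bar\partial^a\partial_t^b q + \bar\partial^a\partial_t^b\Psi\cdot v)$. Both routes are algebraically legitimate for producing what the paper calls the ``identities of the first type,'' and your description of the boundary geometry (using $\tilde n = A^T N$, $\Psi|_\Gamma = (1+h)N$ and the vanishing of $\bar\partial^a\partial_t^b q$ on $\Gamma$ to assemble the weighted form $\int_\Gamma(-\partial_N q)R_J^2|\bar\partial^{6-2j}\partial_t^j h|^2$) is essentially the same calculation as the one the paper carries out starting from (\ref{eq:aid1}).

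However, there is a genuine gap: you describe only \emph{one} family of tests, and it cannot reproduce all of $\frac{d}{dt}\E + \D$. Your pairing always delivers the contribution $\frac{1}{2}\frac{d}{dt}\|\mu^{1/2}(\bar\partial^a\partial_t^b q + \bar\partial^a\partial_t^b\Psi\cdot v)\|_{L^2}^2$ under a time derivative (hence in $\frac{d}{dt}\E$), and $\|\mu^{1/2}\bar\partial^a\partial_t^b v\|_{L^2}^2$ without one (hence in $\D$), plus a boundary term that likewise appears under a time derivative. But $\frac{d}{dt}\E + \D$ also contains the complementary combination: $\frac{d}{dt}\sum_{a+2b\le 5}\|\mu^{1/2}\bar\partial^a\partial_t^b v\|_{L^2}^2$ (here the $v$-norm \emph{is} under $\frac{d}{dt}$), $\sum_{a+2b\le 5}\|\mu^{1/2}(\bar\partial^a\partial_t^b q_t + \bar\partial^a\partial_t^b\Psi_t\cdot v)\|^2_{L^2}$, and the bare boundary term $\sum_{b=0}^2|(-\partial_N q)^{1/2}R_J\,\bar\partial^{5-2b}\partial_t^b h_t|^2_{L^2}$ (none of these are under a time derivative). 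Your test function always places the $\frac{d}{dt}$ on the $q$-side; there is no choice of $(a,b)$ for which it places the $\frac{d}{dt}$ on $\|v\|^2$ and leaves the $q_t$-norm and $h_t$-boundary term as genuine dissipative contributions. This is precisely why the paper introduces a second family — applying $\bar\partial^{7-2j}\partial_t^j$ to (\ref{eq:ALEv}) and testing against $\bar\partial^{7-2j}\partial_t^{j-1}v$ for $j=1,2,3$ — whose output is the identity (\ref{eq:sen12}) with the roles of $\frac{d}{dt}$ reversed. The residuals $\mathcal{S}_j$, $\tilde{\mathcal{S}}_j$, $\mathcal{H}_j$ in (\ref{eq:enidentity}) are defined as the errors of \emph{that} second family; they cannot be regarded as the commutator debris of your single family, and indeed they carry a derivative count $7-2j$ that is inconsistent with the $6-2j$ count of your tests. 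You would need to add a second pairing (e.g., differentiate the velocity relation by $\bar\partial^{7-2j}\partial_t^j$ and test against $\bar\partial^{7-2j}\partial_t^{j-1}v$, or an equivalent heat-equation variant) to close the identity.
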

The proof is provided in Appendix \ref{se:derivation}.

\begin{remark}
 On the right-hand side  of (\ref{eq:enidentity}), we have isolated the 
 error term 
\begin{equation}\label{ssss1}
\mathcal{G} _{\operatorname{Hopf} } = \int_{\Gamma}(-\g_Nq_t)R_J^2|\t^{6-2j}\g_t^{j}h|^2\,dx',
\end{equation} 
from  the other boundary-integral error terms $\mathcal{G}_{j}$ and $ \mathcal{H} _j$; indeed, 
$\mathcal{G} _{\operatorname{Hopf} } $ can only be thought of as an 
 ``error term'' on a transient time-interval, for  after a sufficiently large time, we will
 no longer be able to control $\mathcal{G} _{\operatorname{Hopf} } $ via
  energy methods, and instead, we have to rely upon  a Hopf-type argument to
 prove  that $\mathcal{G} _{\operatorname{Hopf} } < 0$.
 \end{remark}

\subsection{Energy estimates}
To control some of the highest-order error terms in our energy estimates, we shall make use of the following technical lemma, whose
proof is given in \cite{CoSh07} and \cite{CoSh10}.
\begin{lemma} \label{technical}
Let $H^ {\frac{1}{2}} (\Omega)'$ denote the dual space of $H^ {\frac{1}{2}} (\Omega)$.  There exists a positive constant $C$ such that
$$
\| \bar \partial F\|_{^ {\frac{1}{2}} (\Omega)'} \le C \|F\|_{ H^ {\frac{1}{2}} (\Omega)} \text{ for } F \in H^ {\frac{1}{2}} (\Omega) \,.
$$
\end{lemma}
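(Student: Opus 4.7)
The plan is to prove this as a standard duality-plus-interpolation result, exploiting the fact that $\bar\partial = \partial_\theta$ is, in Cartesian coordinates on $\Omega = B(0,1)$, the vector field $-x_2 \partial_{x_1} + x_1 \partial_{x_2}$. This is a smooth, divergence-free vector field on $\bar\Omega$ that is tangent to $\Gamma$, since $V\cdot n = (-x_2,x_1)\cdot(x_1,x_2)/|x| = 0$ on $\Gamma$. Consequently, integration by parts produces the antisymmetric identity
\begin{equation*}
\int_\Omega (\bar\partial F)\, G\, dx = -\int_\Omega F\,(\bar\partial G)\, dx
\end{equation*}
for all sufficiently smooth $F,G$, with no boundary contribution. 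This is the structural input that makes the operator behave like a genuine ``tangential derivative'' and will provide both endpoint estimates needed for interpolation.

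Next, I would record the two endpoint mapping properties of $\bar\partial$. Since the coefficients of $\bar\partial$ are bounded on $\bar\Omega$, one trivially has $\bar\partial : H^1(\Omega) \to L^2(\Omega)$ with $\|\bar\partial F\|_0 \le C\|F\|_1$. For the dual endpoint, given $F\in L^2(\Omega)$ and any $G\in H^1(\Omega)$, the antisymmetry identity gives $|\langle \bar\partial F, G\rangle_{L^2}| = |\langle F, \bar\partial G\rangle_{L^2}| \le \|F\|_0\, \|\bar\partial G\|_0 \le C\|F\|_0\|G\|_1$, so by density $\bar\partial F$ defines a bounded functional on $H^1(\Omega)$, i.e.\ $\bar\partial : L^2(\Omega) \to H^1(\Omega)'$ continuously.

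The final step is to interpolate these two endpoints at $\theta = 1/2$. Using the standard identifications $[L^2(\Omega), H^1(\Omega)]_{1/2} = H^{1/2}(\Omega)$ and, by duality, $[H^1(\Omega)', L^2(\Omega)]_{1/2} = H^{1/2}(\Omega)'$, the operator-interpolation theorem immediately yields $\bar\partial : H^{1/2}(\Omega) \to H^{1/2}(\Omega)'$ with the claimed bound. Alternatively, to avoid invoking abstract interpolation machinery, one may argue concretely with the $K$-functional: for any decomposition $F = F_0 + F_1$ with $F_0\in L^2$ and $F_1\in H^1$, split $\bar\partial F = \bar\partial F_0 + \bar\partial F_1$, estimate the first term against $H^{1/2}$-test functions by duality and the antisymmetric integration by parts, estimate the second in $L^2\hookrightarrow H^{1/2}(\Omega)'$, and take the infimum over admissible decompositions.

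The only genuine subtlety lies in the identification of the dual of the interpolation space with $H^{1/2}(\Omega)'$, which requires care because $H^{1/2}(\Omega)$ sits at the borderline where no trace is defined and one must be careful not to conflate $H^{1/2}_0(\Omega)$ with $H^{1/2}(\Omega)$. In the present setting this subtlety is harmless because $\bar\partial$ is tangent to $\Gamma$, so no boundary term ever appears when dualizing, and the $K$-functional proof sketched above works uniformly without ever needing to impose boundary conditions on the test functions. As the authors note, the detailed verification can be found in \cite{CoSh07,CoSh10}.
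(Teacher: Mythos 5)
Your proof is correct and follows the standard route used in the cited references \cite{CoSh07,CoSh10}: the paper itself gives no proof here, deferring entirely to those works, and the argument there is precisely the one you sketch, namely that the tangential derivative is a divergence-free vector field tangent to $\Gamma$, so integration by parts yields the boundary-term-free antisymmetry, giving the two endpoint mappings $H^1\to L^2$ and $L^2\to (H^1)'$, after which one interpolates at $\theta=1/2$. You also correctly flag (and resolve) the only subtle point, the identification of the interpolated dual with $H^{1/2}(\Omega)'$, which causes no trouble precisely because no boundary conditions ever need to be imposed on the test functions.
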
 
As a consequence of the energy identity  (\ref{eq:enidentity}), we can establish our fundamental energy inequality.
\begin{proposition}[The energy estimate]\label{lm:apriori1}
 Suppose that the bootstrap 
 assumptions~(\ref{eq:assumption1}) and~(\ref{eq:assumption2}) hold with $\epsilon>0$ and $\eta>0$ sufficiently small.  Letting
  $K=\frac{\|q_0\|_4}{\|q_0\|_0}$, 
 \be\label{eq:basicest}
 \sup_{0\leq s\leq t}\E(s)+\frac{1}{2}\int_0^t\D(s)\,ds\leq\E(0)+CK^2\int_0^te^{\eta s}\E(s)\,ds+
 O(\sqrt{\epsilon} )\sup_{0\leq s\leq t}\E(s) \ \text{ for } \ t \in [0,T] \,.
 \ee
\end{proposition}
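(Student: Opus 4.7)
The plan is to integrate the energy identity of Proposition~\ref{lm:lemma1} over $[0,t]$, giving
\[
\E(t) + \int_0^t \D(s)\,ds = \E(0) + \sum_{j=0}^3 \int_0^t\left(\mathcal{G}_{\operatorname{Hopf},j} + \int_\Omega(\mathcal{R}_j + \tilde{\mathcal{R}}_j) + \int_\Gamma \mathcal{G}_j\right)ds + \sum_{j=1}^3\int_0^t\left(\int_\Omega(\mathcal{S}_j+\tilde{\mathcal{S}}_j) + \int_\Gamma \mathcal{H}_j\right)ds,
\]
where $\mathcal{G}_{\operatorname{Hopf},j} = \int_\Gamma (-\g_Nq_t)R_J^2 |\t^{6-2j}\g_t^jh|^2 dx'$ is the dangerous boundary term singled out in~(\ref{ssss1}). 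The two jobs are: (i) show $\mathcal{G}_{\operatorname{Hopf},j}$ produces exactly the transient contribution $CK^2 \int_0^t e^{\eta s}\E(s)\,ds$; (ii) show that all remaining error terms are bounded by $O(\sqrt{\epsilon})\sup_{[0,t]}\E + \tfrac{1}{2}\int_0^t\D\,ds$.

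For (i), I would use the pointwise upper bound on $\g_Nq_t$ provided by Appendix B, which yields $-\g_Nq_t \lesssim \|q_0\|_4\, e^{-\beta t/2}$ (consistent with $q_t$ inheriting decay from the heat operator). Combined with the refined lower bound of Corollary~\ref{co:oddson}, $\chi(t)\gtrsim c_1 e^{-(\lambda_1+\eta/2)t}$, and with $\|q_0\|_4/c_1 \lesssim K^2$ (which follows from $\|q_0\|_4\le K\|q_0\|_0$ combined with Corollary~\ref{cor:hardy}, namely $c_1 \gtrsim \|q_0\|_0/K$), one obtains
\[
\frac{-\g_Nq_t}{\chi(t)} \lesssim K^2\, e^{(-\beta/2 + \lambda_1 + \eta/2)t} = K^2 e^{\eta t},
\]
since $\beta = 2\lambda_1 - \eta$. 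Because the energy controls $|(-\g_Nq)^{1/2}R_J\,\t^{6-2j}\g_t^jh|_0^2$ and $R_J\sim 1$, the contribution of $\mathcal{G}_{\operatorname{Hopf},j}$ is bounded by $CK^2\int_0^t e^{\eta s}\E(s)\,ds$.

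For (ii), each of the remaining error terms is a multilinear expression whose factors either involve $A-\operatorname{Id}$, $\Psi-e$, or $\Psi_t$ (small by Lemma~\ref{lm:basic2} and Lemma~\ref{lm:useful0}), or top-order derivatives of $q$ and $h$. The strategy is to distribute derivatives so that at most one factor carries a top-order norm, estimating it by $\E^{1/2}$ or $\D^{1/2}$, while all remaining factors give smallness from the bootstrap. For the highest-order interior terms (in $\mathcal{R}_0$ and $\mathcal{S}_j$), this requires the improved elliptic estimate~(\ref{eq:improved}) of Lemma~\ref{lm:2}, which provides $\|q\|_{6.5}^2 + \|\g_t^\ell q_t\|_{5-2\ell}^2 \lesssim e^{-\gamma t}\E + \D$; the $\D$ part is absorbed into the left-hand side after application of Young's inequality, and the $e^{-\gamma t}\E$ part integrates to $O(1)\sup\E$. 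For the boundary error terms $\mathcal{G}_j,\mathcal{H}_j$ in which $6$ tangential derivatives on $h$ threaten to produce a term outside $\E$, the duality bound of Lemma~\ref{technical} is used to shift a half-derivative onto a tangential factor of $v$ or $q$, trading it against $\D$.

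The main obstacle is ensuring that no term of the form $\int_\Gamma(-\g_Nq_t)|\t^6 h|^2$ hides among the $\mathcal{G}_j$ or $\mathcal{H}_j$ (once $\mathcal{G}_{\operatorname{Hopf}}$ has been peeled off): any such leftover would have the same scaling as the energy and could not be absorbed as $\sqrt{\epsilon}\,\E$. One therefore has to check carefully that in the energy derivation of Appendix~\ref{se:derivation}, every residual boundary contribution either carries a commutator (and hence a gain of one derivative that can be traded against $\D$) or a factor of $A-\operatorname{Id}$, $h_t$, $\Psi_t$, etc., producing genuine smallness via Lemmas~\ref{lm:useful}, \ref{lm:useful0}, \ref{lm:basic}, \ref{lm:basic2}. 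Once this has been verified, collecting the estimates and taking the supremum over $[0,t]$ yields~(\ref{eq:basicest}).
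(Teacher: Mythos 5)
Your plan matches the paper's actual proof quite closely: Step~1 of the paper's proof of Proposition~\ref{lm:apriori1} is exactly your item~(i) --- the factorization $\frac{-\g_Nq_t}{-\g_Nq}\,(-\g_Nq)$, the Appendix~B decay bound $|\g_Nq_t|_{L^\infty}\lesssim K^2 c_1 e^{-\beta t/2}$, and the bootstrap lower bound $\chi(t)\gtrsim c_1 e^{-(\lambda_1+\eta/2) t}$ --- and your item~(ii) correctly identifies the improved elliptic estimate (Lemma~\ref{lm:2}) and the half-derivative duality device (Lemma~\ref{technical}) as the main tools for the remaining error terms.  So the scaffolding is right.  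But two things need repair before this would close.

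First, you misquote Lemma~\ref{lm:2} and then draw the wrong conclusion from it.  The paper's estimate~(\ref{eq:este}) reads $\|q\|_{6.5}^2 + \sum_{l\le 2}\|\g_t^lq_t\|_{5-2l}^2 \lesssim \epsilon\,e^{-\gamma t}\E + \D$, with the $\epsilon$ factor present; you drop the $\epsilon$ and then say the ``$e^{-\gamma t}\E$ part integrates to $O(1)\sup\E$.''  If that were all you got, the resulting inequality $\sup\E\leq\E(0)+O(1)\sup\E+\cdots$ could not be closed.  The point of Lemma~\ref{lm:useful} (and the $\epsilon$ already in Lemma~\ref{lm:2}) is precisely that every $e^{-\gamma t}$ factor in these error estimates is accompanied by a bootstrap smallness factor, typically $E_\beta^{1/2}e^{-\beta t/2}/\chi^{1/2}\lesssim\sqrt\epsilon\,e^{-\gamma t/2}$; only then does the time integral produce the $O(\sqrt\epsilon)\sup\E$ that the statement of the proposition requires.

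Second, the ``main obstacle'' you flag (that a term of the form $\int_\Gamma(-\g_Nq_t)|\t^6h|^2$ might hide among the $\mathcal{G}_j,\mathcal{H}_j$) is essentially harmless --- by construction those error terms carry $-\g_Nq$, not $-\g_Nq_t$.  The genuinely dangerous term, which you do not address, is $K_4$ in $\mathcal{G}_0$ with $l=6$: the contribution $(-\g_Nq)R_J\,\t^6h\,(v-w)\cdot\t^6\tilde{n}$ contains $\t^7h$, one tangential derivative beyond what $\E$ controls, and it cannot be traded against $\D$ because $\D$ only contains $h_t$ in its boundary part.  The paper's resolution is to decompose $\t^6\tilde{n}\cdot(v-w) = g_1\,\t^7h + g_2$ and observe that $\t^6h\,\t^7h = \tfrac12\,\t\bigl(|\t^6h|^2\bigr)$ is an exact tangential derivative, so one integration by parts on $\Gamma$ places the extra derivative on the low-order weight $(-\g_Nq)R_Jg_1$.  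Without some device of this kind the estimate of $\int_\Gamma\mathcal{G}_0$ does not close.
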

\begin{proof}
Throughout the proof, we will rely on the {\it a priori} bounds of Section~\ref{se:basic}; in particular, we will often
make use of
Lemmas~\ref{lm:useful},~\ref{lm:basic},~\ref{lm:1}, and~\ref{lm:2}. 

\vspace{.1 in}
\noindent
{\em Step 1. The estimate for $\mathcal{G} _{\operatorname{Hopf} }$ in (\ref{ssss1})}
 We claim that
\be\label{eq:error1}
|\mathcal{G} _{\operatorname{Hopf} }|
\leq CK^2\int_0^te^{\eta s}\E(s)\,ds.
\ee
Note that
$$
\Big|\int_0^t\int_{\Gamma}(-\g_Nq_t) R_J^2 \big|\t^6h\big|^2\Big|
\le C
\Big|\int_0^t\int_{\Gamma}\frac{(-\g_Nq_t)}{-\g_Nq}(-\g_Nq)\big|\t^6h\big|^2\Big|
\le C
\int_0^t\Big|\frac{\g_Nq_t}{-\g_Nq}\Big|_{L^{\infty}}\E(s)\,ds.
$$
In order to bound the term $\big|\frac{\g_Nq_t}{\g_Nq}\big|$, we need a decay estimate
for the numerator $|\g_Nq_t|$. 
The Sobolev embedding theory would yield the bound $|\g_Nq_t|_{L^{\infty}}\lesssim\|q_t\|_{2 + \delta }$ for 
$ \delta >0$, but by definition of our decay norm $E_\beta$, it is only the $H^2(\Omega)$-norm of $q_t$ for which
we have the desired decay.   Thus, we arrive at the decay estimate for $q_t$ by using a
comparison principle together with Theorem 1 in Oddson \cite{Od};  indeed,  in Appendix  \ref{se:barrier}, we prove that
\be\label{eq:barrier2}
|\g_Nq_t|_{L^{\infty}}\lesssim K^2c_1e^{-\beta t/2}.
\ee
It then follows from the bootstrap assumption~(\ref{eq:assumption2}) that
\beas
\Big|\frac{\g_Nq_t(s)}{-\g_Nq(s)}\Big|_{L^{\infty}}
\leq \frac{CK^2c_1e^{-(\lambda_1-\eta/2)s}}{c_1e^{-(\lambda_1+\eta/2)s}}
\leq CK^2e^{\eta s} \,,
\eeas
which, in turn,  establishes (\ref{eq:error1}).

\vspace{.1 in}
\noindent
{\em Step 2. Estimates for  $ \mathcal{R} _j$, $\tilde{ \mathcal{R} _j}$, and $\mathcal{G} _j$ in (\ref{eq:enidentity}).}
Our objective will be to show that
\be\label{eq:error2}
\Big|\int_0^t\int_{\Omega}(\mathcal{R}_j+\tilde{\mathcal{R}}_j)
+\int_0^t\int_{\Gamma}\mathcal{G}_j\Big|
\leq O(\sqrt{\epsilon} )\sup_{0\leq s\leq t}\E(s)+\delta\int_0^t\D(s), \ \text{ for }   j=0,\dots 3.
\ee

We establish (\ref{eq:error2}) for the most difficult case,   $j=0$.   The case when $j=1$, $2$, or $3$ can then be proven in a similar fashion.
The proof for $j=0$ is divided into three parts, and we shall begin with the term
$\mathcal{R}_0$.
\subsubsection*{Estimates for the integral $\int_{\Omega}\mathcal{R}_0$}
 As derived in (\ref{eq:remainder}),  the term $\mathcal{R}_0$ can be written as
\begin{align}
\mathcal{R}_0 & :=
\underbrace{\mu\sum_{l=1}^5c_l\t^{l}A^k_i\t^{6-l}q_{,k}\t^6v^i}_{=:I_1}
+
\underbrace{(\mu q_{,k}A^s_iA^k_r),_s\t^6\Psi_{\kappa}^r\t^6v^i}_{=:I_2}
+
\underbrace{\mu\{\t^6,A^k_i\}q_{,k}\t^6v^i}_{=:I_3}
+
\underbrace{\mu A^s_i\t^6\Psi^rA^k_rq,_k\{\t^6,\g_s\}v^i}_{=:I_4}
 \nonumber\\
& \ \  \qquad\qquad
-
\underbrace{(\mu A^k_i),_k\t^6q\t^6v^i}_{=:I_5} 
-
\underbrace{\mu A^k_i\{\t^6,\g_k\}q\t^6v^i}_{=:I_6} 
 -\mu\sum_{l=1}^6\left(
\underbrace{c_l\t^lA^k_i\t^{6-l}v^i_{,k} \ \big(\t^6q+\t^6\Psi \cdot v\big)}_{=:I_7} \right. \nonumber \\
& \ \  \qquad\qquad \left.
+
\underbrace{d_l\t^{6-l}w\cdot\t^lv \  \big(\t^6q+\t^6\Psi \cdot v\big)}_{=:I_8}
-
\underbrace{\t^6\Psi \cdot v_t \ \big(\t^6q+\t^6\Psi \cdot v\big)}_{=:I_9}\right)
\tag{\ref{eq:remainder}}.
\end{align}

\vspace{.1 in}
\noindent
{\em Estimate of $\int_\Omega I_1 $.}
For the extremal case $l=5$,
\begin{align*} 
\Big|\int_{\Omega}\t^5A^k_i\t q,_{k}\t^6v^i\Big|
&  \le 
\|\t^5A^k_i\|_{L^4}\|\t q,_{k}\|_{L^4}\|\t^6v^i\|_0 \\
&
 \lesssim  \|\Psi - \operatorname{Id} \|_{6.5}\|\t q,_{k}\|_{0.5}\|\t^6v^i\|_0 \\
&\lesssim |h|_6\|q\|_{4}\|\t^6v^i\|_0 \\
&
 \lesssim \frac{\|q\|_4}{\chi(t)^{1/2}}\E^{1/2}\D^{1/2} \\
&
\leq\frac{C}{\delta}e^{-\gamma t}\epsilon\E+\delta\D,
\end{align*} 
where we have used H\"{o}lder's inequality and  the Sobolev embedding theorem, as well as  Young's inequality together with
Lemma \ref{lm:useful} for the last inequality.

If $l=4$, then Lemmas \ref{lm:basic} and \ref{lm:2} and  Corollary~\ref{co:eq} show that
\beas
\Big|\int_{\Omega}\t^4A^k_i\t^2 q,_{k}\t^6v^i\Big|
&\leq&\|\t^4A^k_i\|_{0}\|\t^2 q,_{k}\|_{L^{\infty}}\|\t^6v^i\|_0
\lesssim   |h|_{4.5}\|q\|_{4.5}\D^{1/2} \\
&\lesssim&\epsilon(\|q\|_{4.5}^2+\D)
\lesssim\epsilon(\epsilon e^{-\gamma t}\E+\D)
\lesssim \epsilon^2 e^{-\gamma t}\E+\epsilon\D.
\eeas
The case when $l=1,2$ or $3$ are estimated in the same way and yield the same bound.

\vspace{.1 in}
\noindent
{\em Estimates of $\int_\Omega I_k$ for $k=2,3,4,5$.}
The following estimate holds:
$$
\Big|\int_{\Omega} I_2+I_3+I_4+I_5\Big|
\lesssim\frac{C}{\delta}\epsilon e^{-\gamma t}\E + \delta \D.
$$
For  the integral of $I_2$,  an application of an $L^{\infty}$-$L^2$-$L^2$ H\"older's inequality together with
 Lemmas  \ref{lm:useful} and \ref{lm:basic}  leads to
\begin{align}
\Big|\int_{\Omega}I_2\Big|
&\lesssim\|(\mu q_{,k}A^s_iA^k_r),_s\|_{L^{\infty}}\|\t^6\Psi^r\|_{0}\|\t^6v^i\|_0 \nonumber\\
&\lesssim\|\mu A^s_iA^k_r\|_{W^{1,\infty}}\|q\|_3\frac{\E^{1/2}}{\chi^{1/2}}\D^{1/2}
\lesssim \frac{C}{\delta}e^{-\gamma t}\epsilon\E+\delta\D. \nonumber
\end{align}
The estimates for terms $I_3$, $I_4$, $I_5$, and $I_6$ are established in the same manner.
Note that the commutator  $\{\t^6,A^k_i\}q,_k$ in $I_3$ is defined in (\ref{eq:comm}b) and has at most five derivatives acting on $q,_k$; moreover,
 the expression $\{\t^6,\g_k\}f=\t^6\g_kf-\g_k\t^6f$ is of the form
$\sum_{1\leq|\alpha|\leq6}a_{\alpha}\g_{\alpha}f$, where the $a_{\alpha}$ are smooth uniformly bounded
functions on the set $\omega=\{x\in\Omega\big|\,\,\frac{1}{2}\leq|x|\leq1\}$.

\vspace{.1 in}
\noindent
{\em Estimate $\int_\Omega I_7$.}
We first consider the case that $l=6$, and write
$$
\int_{\Omega}\t^6A^k_iv,_k^i\big(\t^6q+\t^6\Psi\cdot v\big)
=\underbrace{\int_{\Omega}\t^6A^k_iv,_k^i\t^6q}_{J_1}
+\underbrace{\int_{\Omega}\t^6A^k_iv,_k^i\t^6\Psi\cdot v}_{J_2} \,.
$$

Thanks to Lemma \ref{technical}, we see that
$J_1  \leq \|\t^5A\|_{0.5} \| Dv \t^6q\|_{0.5}$.    By linear interpolation and the Sobolev embedding theorem,
$ \| Dv \t^6q\|_{0.5} \lessim \|v\|_3 \|q\|_6 + \|v\|_{2.5} \|q\|_{6.5} \lessim \|v\|_3\|q\|_{6.5}$.   It thus follows that
\begin{align*} 
J_1 &\lesssim | h |_6 \|_{5.5}\|v\|_3\|q\|_{6.5} 
\lesssim \frac{C}{\delta} |h|_6^2 \|v\|_{3}^2+\delta \|q\|_{6.5}^2
\lesssim
\frac{C\E E_{\beta}e^{-\beta t}}{\delta\chi(t)}
\lesssim\delta\D+\epsilon e^{-\gamma t}\E
+\delta(\epsilon e^{-\gamma t}\E+\D),
\end{align*} 
for some positive constant $\gamma>0$, where we have employed
Lemmas~\ref{lm:useful} and ~\ref{lm:2} with Corollary~\ref{co:eq}.

As for the integral of $J_2$, we again use Lemma~\ref{technical} to deduce that
\beas
&&\Big|\int_{\Omega}\t^6A^k_iv,_k^i\t^6\Psi\cdot v\Big|
\leq \|\t^5A^k_i\|_{0.5}\|v,_k^i\t^6\Psi\cdot v\|_{0.5}
\lesssim\|v\|_{2.5}^2\|\Psi-\operatorname{Id}\|_{6.5}^2
\lesssim e^{-\beta t}E_{\beta}\frac{\E}{\chi(t)}
\lesssim\epsilon e^{-\gamma t}\E,
\eeas
where $\gamma>0$ is given by Lemma~\ref{lm:useful}.
Now for the case that  $l=5$  in the integral of the term $I_7$, 
it follows that
\beas
&&\Big|\int_{\Omega}\t^5A^k_i\t v,_k^i(\t^6q+\t^6\Psi\cdot v)\Big|
\leq
\|\t^5A^k_i\|_{L^4}\|\t v,_k^i\|_{L^4}\|\t^6q+\t^6\Psi\cdot v\|_{0}\\
&&\lesssim \|\t^5A^k_i\|_{0.5}\|\t v,_k^i\|_{0.5}\|\t^6q+\t^6\Psi\cdot v\|_{0}
\lesssim \frac{\E^{1/2}}{\chi(t)^{1/2}}\|v\|_{2.5}\E^{1/2}\lesssim
\frac{E_{\beta}^{1/2}e^{-\beta t/2}}{\chi(t)^{1/2}}\E
\lesssim \sqrt{\epsilon}  e^{-\gamma t}\E,
\eeas
where we used Lemma~\ref{lm:useful} again and the fact that (by definition of $\E$), 
 $\|\t^6q+\t^6\Psi\cdot v\|_0^2\lesssim\E$.
Hereby we used the 
estimate~(\ref{eq:useful}). The remaining cases $l=1,2,3,4$ follow analogously and the estimates rely on a systematic use of 
Lemmas~\ref{lm:useful},~\ref{lm:basic},~\ref{lm:2}, and Corollary~\ref{co:eq}.

\vspace{.1 in}
\noindent
{\em Estimate of $\int_\Omega I_8$.}
For the case that  $l=1$ or $2$, we have that
\begin{align*}
\Big|\int_{\Omega}
\t^{6-l}w\cdot\t^lv\big(\t^6q+\t^6\Psi\big)\Big|
& \lesssim\|\t^{6-l}w\|_0\|\t^lv\|_{L^{\infty}}\|\t^6q+\t^6\Psi\cdot v\|_0
\lesssim\frac{\D^{1/2}}{\chi(t)^{1/2}}E_{\beta}^{1/2}e^{-\beta t/2}\E^{1/2}\\
& \lesssim\frac{\epsilon e^{-\gamma t}}{\delta}\E+\delta\D \,,
\end{align*}
while for the case that $l=3,4,5$ or $6$,
\[
\Big|\int_{\Omega}
\t^{6-l}w\cdot\t^lv\big(\t^6q+\t^6\Psi\cdot v\big)\Big|
\lesssim\|\t^{6-l}w\|_{L^{\infty}}\|\t^lv\|_0\|\t^6q+\t^6\Psi\cdot v\|_0
\lesssim\epsilon \D,
\]
where we used the Sobolev embedding $H^{1+\delta}\hookrightarrow L^{\infty}$ and 
Lemma~\ref{lm:basic}.

\vspace{.1 in}
\noindent
{\em Estimate of $\int_\Omega I_9$.}
We see that 
$$
\Big|\int_{\Omega}\t^6\Psi\cdot v_t(\t^6q+\t^6\Psi\cdot v)\Big|
\lesssim\|\t^6\Psi\|_0\|v_t\|_{L^{\infty}}\|(\t^6q+\t^6\Psi\cdot v)\|_0
\lesssim\frac{\E^{1/2}}{\chi(t)^{1/2}}E_{\beta}^{1/2}e^{-\beta t/2}
\E^{1/2}
\lesssim
\sqrt{\epsilon} e^{-\gamma t/2}\E \,,
$$
with the decay rate $\gamma >0$ given in  Lemma~\ref{lm:useful}.
\subsubsection*{Estimate of $\int_{\Omega}\tilde{\mathcal{R}}_1$}
In the same manner, we find that 
$\Big|\int_{\Omega}\tilde{\mathcal{R}}_1\Big|
\lesssim \epsilon e^{-\gamma t}\E+\delta\D$.

\subsubsection*{Estimate of the boundary integral $\int_{\Gamma}\mathcal{G}_0$}
We begin with the formula~(\ref{eq:remaindergamma}) (whereby we recall~(\ref{eq:ntilde}) $\tilde{n}=AN=\sqrt{R^2+R_{\theta}^2}n$).
\begin{align}
\mathcal{G}_0 & =
\underbrace{-\g_Nq\t^6\Psi\cdot \tilde{n}\t^6\Psi\cdot \tilde{n}_t}_{K_1}
-
\underbrace{\g_Nq\frac{d}{dt}\Big[R\t^6h\big(
-R_J+\sum_{a=0}^5c_a^J\t^ah\t^{6-a}\xi\cdot(h\xi-h_{\theta}T)\big)\Big]}_{K_2} \nonumber\\
& \ 
+
\underbrace{\g_Nq\frac{d}{dt}\Big[\big(-R_J+\sum_{a=0}^5c_a^J\t^ah\t^{6-a}\xi\cdot(h\xi-h_{\theta}T)\big)^2\Big]}_{K_3}
+
\underbrace{\sum_{l=1}^6a_l(-\g_Nq)\t^6\Psi\cdot \tilde{n}\t^{6-l}(v-w)\cdot\t^l\tilde{n}}_{K_4}. \tag{\ref{eq:remaindergamma}}
\end{align}
\noindent
{\em Estimate of $\int_\Omega K_1$.}
Note that
\beas
&&\Big|\int_{\Gamma}\g_Nq_t\t^6\Psi\cdot \tilde{n}\t^6\Psi\cdot \tilde{n}_t\Big|
\lesssim
|\g_Nq_t|_{L^{\infty}}|\tilde{n}_t|_{L^{\infty}}|\t^6\Psi|_0^2
\lesssim
|\g_Nq_t|_{1}|h_t|_2\frac{\E}{\chi(t)}
\lesssim E_{\beta}\frac{e^{-\beta t}}{\chi(t)}\E \lesssim \epsilon e^{-\gamma t}\E,
\eeas
where we used the trace theorem and Lemma~\ref{lm:useful}.

\vspace{.1 in }
\noindent 
{\em Estimates of $\int_\Omega K_2$ and $\int_\Omega K_3$.}
These two integrals are lower-order and thanks to
 Lemmas~\ref{lm:basic} and~\ref{lm:useful} are bounded by $ \epsilon e^{-\gamma t}\E+\delta\D$.
Note that $|J|=1+O(\epsilon)$ remains close to $1$ due to the a priori smallness bounds from Lemma~\ref{lm:basic}.

\vspace{.1 in }
\noindent
{\em Estimate of $\int_\Omega K_4$.} The estimate of $\int_\Omega K_4$ requires some explanation, as it
 has the largest derivative count in $\mathcal{G}_0$.  In Appendix \ref{se:derivation}, we derive the
identity
\begin{equation}\label{ssss200}
\t^6\Psi\cdot \tilde{n}
=R_J\t^6h  - R_J +\sum_{a=0}^5c_a^J\t^ah\t^{6-a}\xi\cdot(h\xi-h_{\theta}\tau),
\end{equation} 
where we recall that $\tau$ is the unit tangent defined by~(\ref{eq:normaltangent}) and $R_J=RJ^{-1}$.
Substitution of (\ref{ssss200})  in the integral
$\int_{\Gamma}(-\g_Nq)\t^6\Psi\cdot \tilde{n}\t^{6-l}(v-w)\cdot\t^l\tilde{n}$ then yields
\begin{align} 
&\Big|\int_{\Gamma}(-\g_Nq)\t^6\Psi\cdot \tilde{n}\t^{6-l}(v-w)\cdot\t^l\tilde{n}\Big| \lesssim   \Big|\int_{\Gamma}(-\g_Nq)\t^{6-l}(v-w)\cdot\t^l\tilde{n}\Big|
 \nonumber \\
& \quad \qquad \qquad
+\Big|\int_{\Gamma}(-\g_Nq)O(\t^5h)\cdot \tilde{n}\t^{6-l}(v-w)\cdot\t^l\tilde{n}\Big|
+\Big|\int_{\Gamma}(-\g_Nq)R\t^6h\t^{6-l}(v-w)\cdot\t^l\tilde{n}\Big|.
\label{eq:interm}
\end{align} 
The first and the second integrals on the right-hand side of~(\ref{eq:interm}) are easily estimated using H\"{o}lder's inequality and the
Sobolev embedding theorem, while the third integral on the right-hand side of~(\ref{eq:interm}) requires some care due to 
the presence of $\t^6h$.
If $l=1$ or $l=2$, then
\beas
&&\Big|\int_{\Gamma}(-\g_Nq)R\t^6h\t^{6-l}(v-w)\cdot\t^l \tilde{n}\Big|
\leq|\sqrt{-\g_Nq}\t^6h|_0|\sqrt{-\g_Nq}R|_{L^{\infty}}
(|\t^4v|_1+|\t^4h_t|_1)|\t^l \tilde{n}|_{L^{\infty}}\\
&&\lesssim\E^{1/2}\|q\|_2^{1/2}(\epsilon e^{-\gamma t/2}\E^{1/2}+\D^{1/2})\epsilon 
\lesssim\epsilon^2 P(\E,E_{\beta})e^{-\gamma t}\E+\delta\D,
\eeas
where we have used Corollary~\ref{co:eq}, Lemma~\ref{lm:basic}, and then Young's inequality for the last estimate.
The case that $l=3$, $4$, or $5$ follows similarly from Lemmas~\ref{lm:useful},~\ref{lm:basic}, and~\ref{lm:2}.
The case $l=6$  appears problematic because of the term $\t^6\tilde{n} \cdot \tau$ which, modulo coefficients, is essentially
$\t^7 h$, one derivative more than appears in $\E$.  The integral is, however, easily estimated thanks to the presence of
an exact derivative, formed from the integrand $\t^7h \ \t^6h$. 

We set $J_h = \sqrt{ R^2+ h_\theta^2}$ and write the unit tangent to $\Gamma(t)$ as 
$ \mathfrak{t}  =  J_h ^{-1} (R \tau + h_\theta N)$.   A simple computation shows that
$$
n_\theta = J_h ^{-2} (R^2 + 2h_\theta^2 + R h_{\theta\theta}) \mathfrak{t}   \,.
$$

  Since $v-w = \mathfrak{t}   \cdot (v-w) \, \mathfrak{t}  $ on $\Gamma$, we
see that $\t^6 n \cdot (v-w) = \mathfrak{t}  \cdot (v-w) \ \t^6n\cdot \mathfrak{t}  $. 
We then write
$$
\t^6\tilde{n}\cdot (v-w)  = g_1 \, \t^7h+g_2,
$$
where $g_1 = \mathfrak{t}\cdot (v-w) J_h ^{-2} R$, and where $g_2$  is a lower-order term in $v -w$ and has at most six tangential derivatives on $h$.
We then write
\begin{align*} 
\int_{\Gamma}(-\g_Nq)R \, \t^6h \, (v-w)\cdot\t^6\tilde{n} & = \int_{\Gamma}(-\g_Nq)R \, g_1\, \t^6h \,  \t^7 h +  \int_{\Gamma}(-\g_Nq)R \, \t^6h \,g_2  \\
& =- {\frac{1}{2}}  \int_{\Gamma} \bar \partial [(-\g_Nq)R \, g_1]\, |\t^6h|^2 +  \int_{\Gamma}(-\g_Nq)R \, \t^6h \,g_2
\end{align*} 
Arguing in a similar fashion as for the case that $l=1$ or $2$, we see that
\[
\Big|\int_{\Gamma}(-\g_Nq)R\t^6h(v-w)\cdot\t^6\tilde{n}\Big|
\lesssim \sqrt{\epsilon} e^{-\gamma t}\E.
\]

 \vspace{.1 in}
 \noindent
{\em Step 3. Estimates for $\mathcal{S}_j$, $\tilde{\mathcal{S}}_j$, $\mathcal{H}_j$ in~(\ref{eq:enidentity}).}
We next prove that
\be\label{eq:error3}
\Big|\int_0^t\int_{\Omega}(\mathcal{S}_j+\tilde{\mathcal{S}}_j)
+\int_0^t\int_{\Gamma}\mathcal{H}_j\Big|
\leq O(\sqrt{\epsilon} )\sup_{0\leq s\leq t}\E(s)+\delta\int_0^t\D(s),\quad j=1,2,3.
\ee

We will analyze the case that $j=1$, as the estimates for the case that $j=2$ or $3$ follow in the same manner.
We begin with the definition of $\mathcal{S}_1$ given in  (\ref{eq:sremainder}) as
\begin{align}
\mathcal{S}_1 & :=
\sum_{0<a+b<6\atop a\leq5,\,b\leq1}c_{ab}\mu\t^a\g_t^bA^k_i\t^{5-a}\g_t^{1-b}q_{,k}\t^5v^i
+S'_1 \nonumber\\
& \ \ 
-\sum_{l=1}^5d_l\mu\t^{5-l}\Psi_{t}\cdot\t^lv
\big(\t^5q_t+\t^5\Psi_t\cdot v\big)
-\sum_{l=1}^5c_l\mu\t^lA^k_i\t^{5-l}v^i_{,k}\big(\t^5q_t+\t^5\Psi_t\cdot v\big) \tag{\ref{eq:sremainder}},
\end{align}
where $S'_1$ is a lower-order term given by
\beas
S'_1&=&(\mu q_{,k}A^s_iA^k_r),_s\t^5\Psi^r_t\t^5v^i
+\{\t^5\g_t,A^k_i\}q_{,k}\t^5v^i
+\{\t^5A_{t,i}^k\}q_{,k}\t^5v^i+\mu A^s_i\t^5\Psi_t^rA^k_rq,_k\{\t^5,\g_s\}v^i\\
&&
-(\mu A^k_i),_k\t^5q_t\t^5v^i
+\mu A^k_i\t^5\g_t^jq\{\t^5,\g_k\}v^i
+\mu A^k_i\{\t^5,\g_k\}\g_tq\t^5v^i
\eeas
Most of the estimates are completely standard and we focus on the more 
problematic terms, characterized by the highest number of derivatives applied to two out of the three terms
in our cubic integrands. For illustration, in the first term on the right-hand side 
of~(\ref{eq:sremainder})
we analyze the cases $(b=0,a=1)$ and $(b=0,a=5)$. 
If $(b=0,a=1)$ then we first integrate-by-parts and  an
$L^{\infty}$-$L^2$-$L^2$ H\"{o}lder's  inequality to find that
\beas
\Big|\int_{\Omega}\t A^k_i\t^{4}\g_tq_{,k}\t^5v^i\Big|
&=&\Big|\int_{\Omega}\t^2A^k_i\t^{3}\g_tq_{,k}\t^5v^i
+\int_{\Omega}\t A^k_i\t^{3}\g_tq_{,k}\t^6v^i\Big|\\
&\leq&\|\t A^k_i\|_{W^{1,\infty}}\|\t^{3}\g_tq_{,k}\|_0
(\|\t^5v^i\|+\|\t^6v\|_0)\\
&\leq&|h|_{3.5}\|q_t\|_4(\|\t^5v^i\|+\|\t^6v\|_0)\lesssim
\epsilon \D,
\eeas
where Lemmas~\ref{lm:useful} and~\ref{lm:2} have been used.
If $(b=0,a=5)$ then 
\beas
\Big|\int_{\Omega}\t^5A^k_i\g_tq_{,k}\t^5v^i\Big|
\leq|h|_{6}\|q_t\|_{2}\|\t^5v\|_0\leq\frac{\E^{1/2}}{\chi(t)^{1/2}}E_{\beta}(t)^{1/2}e^{-\beta t/2}\D^{1/2}
\lesssim\frac{\epsilon e^{-\gamma t}}{\delta}\E+\delta\D,
\eeas
where we used Lemmas~\ref{lm:useful},~\ref{lm:basic}, and~\ref{lm:1}.
The remaining estimates in the expressions~(\ref{eq:sremainder}) 
and~(\ref{eq:s1tilde}) for
$\mathcal{S}_1$ and $\tilde{\mathcal{S}}_1$ 
follow in the identical manner.   As to the boundary integral of
 $\mathcal{H}_1$, we state  the formula for the integrand derived in (\ref{eq:sremaindergamma}) as
\be\tag{\ref{eq:sremaindergamma}}
\begin{array}{l}
\displaystyle
\mathcal{H}_1:=
2\g_Nq\t^5h_tR_J\sum_{a=0}^4\t^ah_t\t^{5-a}\xi\cdot \tilde{n}
+\sum_{l=1}^4a_l(-\g_Nq)\t^5\Psi\cdot \tilde{n}\t^5v\cdot \tilde{n}\t^{5-l}(v-w)\cdot\t^l\tilde{n}.
\end{array}
\ee
We consider the boundary integral of the  first term on the right-hand side.  We begin with 
the interpolation bound
\begin{equation}\label{ssss202}
|h_t|_4\leq|h_t|_3^{1/2}| h_t|_5^{1/2}
\lesssim\sqrt{\epsilon} \frac{\D^{1/4}}{\chi(t)^{1/4}},
\end{equation} 
where we have used (\ref{ssss201}) to bound $|\t^3h_t|$
and the definition of $\D$ given in (\ref{eq:ALEdissipation}).
If $a=4$ in the first term of the right-hand side of (\ref{eq:sremaindergamma}), then
\beas
\Big|\int_{\Gamma}\g_Nq\t^5h_tR_J\t^4h_t\t\xi\cdot \tilde{n}\Big|
&=&\frac{1}{2}\Big|\int_{\Gamma}\g_Nq\t(|\t^4h_t|^2)R_J\t\xi\cdot \tilde{n}\Big|\\
&\lesssim&\Big|\int_{\Gamma}\t(\g_Nq\t\xi\cdot \tilde{n})|\t^4h_t|^2\Big|
\lesssim\|q\|_4\epsilon\frac{\D^{1/2}}{\chi(t)^{1/2}}
\lesssim \epsilon^{3/2}e^{-\gamma t}\D^{1/2}\\
&\lesssim&\epsilon^{3/2}e^{-2\gamma t}+\epsilon^{3/2}\D,
\eeas
where we have once again used Lemma~\ref{lm:useful} in  second inequality,
the estimate (\ref{ssss202}), and Young's inequality.
If $a\in\{0,1,2,3\}$ then
\beas
\Big|\int_{\Gamma}\g_Nq\t^5h_tR_J\t^ah_t\t^{5-a}\xi\cdot \tilde{n}\Big|
&\lesssim&|\g_Nq|_{L^{\infty}}|\t^5h_t|_{0}|R_J|_{L^{\infty}}|\t^ah_t|_0
|\t^{5-a}\xi\cdot \tilde{n}|_{L^{\infty}}\\
&\lesssim&\|q\|_3\frac{\D^{1/2}}{\chi(t)^{1/2}}\epsilon 
\lesssim\epsilon^{3/2}e^{-\gamma t}\D^{1/2}
\lesssim\epsilon^{3/2}e^{-2\gamma t}+\epsilon^{3/2}\D,
\eeas
where we used Lemmas~\ref{lm:basic} and~\ref{lm:useful} and the same idea as above.
The estimates for the second term on the right-hand side 
of~(\ref{eq:sremaindergamma}) follow in an analogous vein, relying 
crucially on Lemmas~\ref{lm:basic} and~\ref{lm:useful}.
This finishes the proof of~(\ref{eq:error3}).

 \vspace{.1 in}
 \noindent
{\em Step 4.} 
The proof of the lemma is a direct consequence of the bounds (\ref{eq:error1}), (\ref{eq:error2}), and (\ref{eq:error3}).
\end{proof}
\section{Existence for all time $t\ge 0$ and nonlinear stability}\label{se:main}
\subsection{Structure of the proof}
The basic goal in our strategy for global-in-time existence and decay of the temperature function is to prove
that on any time-interval on which the bootstrap assumptions (\ref{eq:assumption1}) and (\ref{eq:assumption2})  are valid, we have that
$$
\sup_{0\leq s\leq t}\E(s)+\int_{0}^t\D(s)\,ds
\leq C_K\E(0),
$$
where $C_K>0$ is some explicit constant depending on $K$. 
Upon choosing the initial data $(q_0,h_0)$ sufficiently small, we can obtain an improvement of the first 
bootstrap bound in~(\ref{eq:assumption1}). In Section~\ref{se:Ebetabound} we show the improvement of the bootstrap assumption
on $E_{\beta}$ in~(\ref{eq:assumption1}) and in Corollary~\ref{co:oddson} we have already shown the improvement of
the bootstrap assumption~(\ref{eq:assumption2}).
By a continuity argument this leads to a global existence result.

In order to implement the above strategy, we start with the basic energy inequality given by~(\ref{eq:basicest}). 
Note however the presence of an exponentially growing term 
$CK^2\int_0^te^{\eta s}\E(s)\,ds$ on the right-hand side of~(\ref{eq:basicest}).
That term appears by treating the terms
$\int_{\Gamma}(-\g_Nq_t)R_J^2|\t^{6-2j}\g_t^jh|^2\,d\theta$, $j=0,1,2,3$ as error terms.
By applying a straightforward Gronwall-type argument,
this will be enough to guarantee that
solutions to the classical Stefan problem (\ref{eq:ALE})  exist on a {\it sufficiently
long} time-interval $[0,T_K]$, where the time $T_K$ may be larger than the time of existence guaranteed by
our local well-posedness theorem  in~\cite{HaSh}.
As we  explained in the introduction,  by a {\it sufficiently long}  time-interval, we mean a time $T_K$ after which
the dynamics of the Stefan problem~(\ref{eq:ALEheat}) are, in fact,  dominated by
the projection of the solution  onto the first eigenfunction $\varphi_1$ of the Dirichlet-Laplacian.

To prove global existence we need, however,  more refined estimates that will show that
the  $\int_{\Gamma}(-\g_Nq_t)R_J^2|\t^{6-2j}\g_t^jh|^2\,d\theta$ are in fact sign-definite for $t\geq T_K$, leading to the elimination of
the exponentially-in-time growing bounds.
First, in Section~\ref{se:positive} we prove strict positivity 
of the term $\g_Nq_t$ at time $T_K$.
Finally in Section~\ref{se:mainproof}, we use  a comparison principle  to show that $\g_Nq_t$
remains positive {\em after} time $T_K$. This allows us, in turn, to prove the 
uniform-in-time energy bound  and extend the solution  for all time $t\ge 0$.

\subsection{Boundedness of $E_{\beta}$}\label{se:Ebetabound}
The following lemma shows that under the bootstrap assumptions, the bound on $E_{\beta}+\int_0^tD(s)\,ds$ 
from~(\ref{eq:assumption2}) is improved. 
\begin{lemma}\label{lm:heat}
There exists a constant $\tilde{C}$ and $\epsilon>0$ sufficiently small, such that if the bootstrap 
assumptions~(\ref{eq:assumption1}) and~(\ref{eq:assumption2}) hold with such $\epsilon$ and $\tilde{C}$, then
\[
 E_{\beta}(t)+\int_0^tD(s)\,ds<\frac{\tilde{C}}{2} E_{\beta}(0).
 \]
 \end{lemma}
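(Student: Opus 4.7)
The plan is to derive a parabolic-type differential inequality of the schematic form
\[
\tfrac{d}{dt} E_{\beta}(t) + \tfrac{1}{2} D(t) \le (\text{small}) D(t) + (\text{exponentially decaying remainder}),
\]
and then integrate in time and invoke the bootstrap assumptions. The crucial observation that makes the decay rate $\beta = 2\lambda_1 - \eta$ natural is that $q|_{\Gamma}=0$ for all $t$, so $\g_t^b q|_{\Gamma}=0$ for every $b\ge 0$; thus the variational characterization $\|\nabla f\|_0^2 \ge \lambda_1 \|f\|_0^2$ on $H^1_0(\Omega)$ is available for each time-derivative $\g_t^b q$.

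First I would carry out the $L^2$-energy estimate for the $b$-th time-differentiated equation $\g_t^{b+1}q - \Delta_{\Psi}\g_t^b q = f_b$, for $b=0,1,2$. Multiplying by $\g_t^b q$ and integrating by parts, the coercive form $\int_{\Omega} A^j_i A^k_i (\g_t^b q),_k (\g_t^b q),_j$ splits into $\|\nabla \g_t^b q\|_0^2$ plus a correction of size $\|A^\top A - \operatorname{Id}\|_{L^\infty} = O(\sqrt{\epsilon})$ by Lemma~\ref{lm:basic2}. Multiplying the resulting inequality by $e^{\beta t}$ and using the Dirichlet Poincar\'e inequality to absorb the $\beta e^{\beta t}\|\g_t^b q\|_0^2$ contribution coming from the time derivative of the weight, one retains on the left a positive fraction of $e^{\beta t}\|\nabla \g_t^b q\|_0^2$ as dissipation, provided $\sqrt{\epsilon} \ll \eta / \lambda_1$. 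The forcing $f_b$ consists of $\g_t^b(v \cdot \Psi_t)$ and commutator terms with $\g_t A$, which are controlled via Lemmas~\ref{lm:useful0}, \ref{lm:useful}, and~\ref{lm:basic} by a small constant times $D^{1/2}$ plus genuinely exponentially decaying lower-order terms.

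Next I would promote the $L^2$ dissipation $\|\nabla \g_t^b q\|_0^2$ to the full spatial norms appearing in $D$ and $E_{\beta}$ by elliptic regularity applied to $-\Delta_{\Psi} (\g_t^b q) = f_b - \g_t^{b+1} q$ with zero Dirichlet data. Lemma~\ref{sobolevA} then yields the cascade $\|q_{tt}\|_1 \Rightarrow \|q_t\|_3 \Rightarrow \|q\|_5$ (producing $D$) and analogously $\|q_{tt}\|_0 \Rightarrow \|q_t\|_2 \Rightarrow \|q\|_4$ (producing the $q$-part of $E_{\beta}$), while the $v$-contributions $\|v\|_3^2$ and $\|v_t\|_1^2$ in $E_{\beta}$ are controlled via $v = -A^\top \nabla q$ together with Lemma~\ref{lm:basic2}. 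Summing the three energy estimates, absorbing the $C\sqrt{\epsilon}\, D$ term into the left, and bounding the remaining errors by the bootstrap assumption~(\ref{eq:assumption1}) gives
\[
E_{\beta}(t) + \int_0^t D(s)\,ds \;\le\; C_{*} E_{\beta}(0)
\]
for a universal constant $C_{*}$ independent of $\tilde C$. Fixing $\tilde C > 2 C_{*}$ (and then choosing $\epsilon$ small in terms of this $\tilde C$) delivers the strict improvement $E_{\beta}(t) + \int_0^t D(s)\,ds < \tilde C E_{\beta}(0)/2$.

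The main obstacle is compatibility of the exponential weight $e^{\beta t}$ with the perturbed Poincar\'e-type coercivity, \emph{uniformly in time}. Since $A$ depends on $h$ whose top-order norms are controlled only through the potentially singular weight $\chi(t)^{-1/2}$, one must rely on the sub-critical uniform bound $\|A - \operatorname{Id}\|_{L^\infty} \lessim |h|_2 \lessim \sqrt{\epsilon}$ from Lemma~\ref{lm:basic2}, rather than on any top-norm estimate. Were this uniform smallness unavailable, the gap $\eta$ between $\beta$ and $2\lambda_1$ would be swallowed by the coercivity correction and the required decay estimate would break down; this is why the a priori bounds of Section~\ref{se:basic}, obtained before the higher-order energy estimate, are indispensable here.
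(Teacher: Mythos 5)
Your proposal is correct and rests on the same core ingredients as the paper's proof: the zero Dirichlet data for $\partial_t^b q$, the perturbed Poincar\'e inequality $\|\nabla_\Psi f\|_0^2 \ge (\lambda_1-O(\sqrt\epsilon))\|f\|_0^2$ obtained from the \emph{uniform} $L^\infty$ bound $\|A-\operatorname{Id}\|_{L^\infty}\lesssim|h|_2\lesssim\sqrt\epsilon$ of Lemma~\ref{lm:basic2}, and the elliptic regularity cascade of Lemma~\ref{sobolevA}. You correctly flag the key subtlety (the $\eta$-gap must survive the $O(\sqrt\epsilon)$ coercivity loss, which only works because of the sub-critical a priori bound on $h$ and not through any $\chi^{-1}$-weighted norm).

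The organizational difference worth noting: the paper performs a single $L^2$ energy estimate at the highest time-derivative level, namely for $q_{tt}$ alone, yielding $\tfrac12\tfrac{d}{dt}\|q_{tt}\|_0^2+\|\nabla_\Psi q_{tt}\|_0^2\le C\epsilon D$, and then \emph{all} the spatially higher norms $\|q_t\|_2,\|q\|_4$ (resp.\ $\|q_t\|_3,\|q\|_5$) are recovered from $\|q_{tt}\|_0$ (resp.\ $\|q_{tt}\|_1$) purely by elliptic regularity, with no further integration-by-parts. It then closes a Gronwall differential inequality $\tfrac{d}{dt}\|q_{tt}\|_0^2 + (2\lambda_1-O(\epsilon))\|q_{tt}\|_0^2 \le 0$ and multiplies by $e^{\beta t}$ only at the very end. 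You instead write three parallel weighted energy estimates at $b=0,1,2$ with the $e^{\beta t}$ factor built in from the start and combine them. Both are sound; the paper's version is slicker in that it requires only one energy identity and confines all the commutator bookkeeping to the single forcing term $f_2$, whereas your version needs to re-derive the weighted coercivity at every level and handle $f_0,f_1$ separately. The trade-off is cosmetic — the estimates you need for $f_0,f_1$ are simpler than those for $f_2$ — so neither route is substantively harder.
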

\begin{proof}
We set 
$$ x(t) = \| q(t)\|_4^2 + \| q_t(t)\|_2^2+  \| q_{tt}(t)\|_0^2 \ \ \text{ and recall  that} \ \ 
D(t) = \| q(t)\|_5^2 + \| q_t(t)\|_3^2+  \| q_{tt}(t)\|_1^2 \,.$$

\vspace{.1 in}
\noindent
{\em Step 1. Energy inequality for $q_{tt}$.}   From equation (\ref{heatss}c), we see that
\begin{align*}
{\frac{1}{2}} \frac{d}{dt} \| q_{tt}\|_0^2 + \| \nabla_\Psi q_{tt}\|_0^2 =   
\int_\Omega f_2  \, q_{tt},
\end{align*} 
 where the forcing term $f_2$ is defined just below equation (\ref{heatss}).
We next show that the right-hand side can be bounded by $ \epsilon D$.   
We first focus on the term $(\Psi_t \cdot v)_{tt}$ in the forcing function
$f_2$. Using the product rule we obtain
\begin{align*}
\int_\Omega  (\Psi_{t} \cdot v)_{tt} \, q_{tt} & =  
\underbrace{ \int_\Omega  \Psi_{ttt} \cdot v \, q_{tt}}_{\mathcal{A}_1} + \underbrace{ \int_\Omega  2\Psi_{tt} \cdot v_t \, q_{tt}}_{ \mathcal{A} _2}
+\underbrace{ \int_\Omega  \Psi_{t} \cdot v_{tt} \, q_{tt} }_{ \mathcal{A} _3} \,.
\end{align*} 
For the integral $\mathcal{A} _1$, we see that
\begin{align*}
\Big|\int_\Omega  \Psi_{ttt} \cdot v \, q_{tt}\Big|
& \le \|\Psi_{ttt}\|_0\|v\|_{L^{\infty}}\|q_{tt}\|_0
\lesssim |h_{ttt}|_{0.5}\|v\|_{2}\|q_{tt}\|_0
 \lesssim \epsilon D,
\end{align*}
where we used the bound~(\ref{eq:clever1}) to estimate $|h_{ttt}|_{0.5}$ by $\E^{1/2}$.
The estimate $|\mathcal{A}_2|\lesssim\epsilon D$ follows analogously to the estimate for term $\mathcal{A}_1$ and
the bound on $\mathcal{A}_3$ follows from
\[
\Big|\mathcal{A}_3\Big|
\lesssim \|\Psi_t\|_{L^{\infty}}\|v_{tt}\|_{0}\|q_{tt}\|_0
\lesssim\|\Psi_{t}\|_{2}
\lesssim \epsilon D,
\]
where we have used Lemma~\ref{lm:basic} to infer that $\|\Psi_{t}\|_{2}\lesssim \epsilon $.
All  of the remaining terms in the forcing function $f_2$ can be estimated by a straightforward application of the
Sobolev embedding theorem together with Lemma~\ref{lm:basic} (to guarantee the smallness of various Sobolev norms
applied to the coefficient matrix $(A^k_i)_{k,i=1,2}$).  Thus, in summary,
\be\label{eq:diffin}
\frac{1}{2}\frac{d}{dt}\|q_{tt}\|_0^2+\|\nabla_{\Psi}q_{tt}\|_0^2
\leq C \epsilon \, D.
\ee

\vspace{.1 in}
\noindent
{\em Step 2. Elliptic estimates.}
We next prove that the quantities $x$ and $y$ are respectively controlled
by $\|q_{tt}\|_0^2$ and $\|\nabla_{\Psi}q_{tt}\|_0^2$.
Using the elliptic regularity estimate of Lemma~\ref{sobolevA},
the elliptic equations~(\ref{heatss}), and Lemma~\ref{lm:basic},  it follows that
\be\label{eq:mh1}
\|q_{t}\|_{2}\lesssim\|q_{tt}\|_{0}+\|f_1\|_0 \,,
\ee
and
\begin{equation}\label{ss901}
 \|q\|_{4}\lesssim\|q_t\|_2+\|f_0\|_2 \lessim
 \|q_{tt}\|_{0}+\|f_1\|_0+\|f_0\|_2 \,.
\end{equation} 
A straightforward application of the Sobolev embedding theorem together with Lemma~\ref{lm:basic}
implies that
\begin{equation}\label{ss900}
\|f_1\|_0^2+\|f_0\|_2^2\lesssim\epsilon x(t).
\end{equation} 
Hence, with (\ref{eq:mh1})--(\ref{ss900}),
$$
x(t) \lessim  \|q_{tt}\|_{0}  + \epsilon x(t) \,,
$$
so that for $ \epsilon >0$ taken sufficiently small,
\[
x(t)\lesssim\|q_{tt}(t)\|_0^2 \,.
\]
Since $\ \|f_1\|_1^2+ \|f_0\|_3^2\lesssim\epsilon D(t)$, the same argument provides 
\be\label{eq:Dbound}
D\lesssim \|q_{tt}\|_1^2\,
\lesssim \|\nabla q_{tt}\|_0^2 \lesssim \|\nabla_{\Psi}q_{tt}\|_0^2 \,,
\ee
the last inequality following from the uniform lower-bound of  the  matrix $A\, A^T$.

\vspace{.1 in}

\noindent
{\em Step 3. Poincar\'e inequality.}
The following bound holds:
\be\label{poincare2}
(\lambda _1 - O( \epsilon )) \|f\|_0^2 \le \| \nabla _\Psi f \|_0^2 \,,
\ee
where $ \nabla_\Psi = A^T \nabla $ and $f\in H_0^1(\Omega)$.
To see~(\ref{poincare2}), note that the inequalities (\ref{A-1_est}) and (\ref{eq:apriori3der}) show that
$$
\| A- \operatorname{Id} \|_{ L^ \infty } \lessim \epsilon  \,,
$$
from which it follows that $A^k_i A^j_i \xi _k \xi _j \ge (1- O( \epsilon )) | \xi |^2$ for all $ \xi \in \mathbb{R}^2  $.  The
Poincar\'{e} inequality  $ \lambda _1 \| f\|_0^2 \le \| \nabla f \|_0^2$ for all $q \in H^1_0(\Omega) $ then concludes the proof.

\vspace{.1 in}
\noindent
{\em Step 4. The differential inequality and decay.} 
From~(\ref{eq:diffin}) and~(\ref{eq:Dbound}) we obtain that
\[
\frac{1}{2}\frac{d}{dt}\|q_{tt}\|_0^2+(1-O(\epsilon))\|\nabla_{\Psi}q_{tt}\|_0^2
\leq 0.
\]
Using the Poincar\'e inequality~(\ref{poincare2}),  it follows that
\[
\frac{d}{dt}\|q_{tt}\|_0^2+(2\lambda_1-O(\epsilon))\|q_{tt}\|_0^2\leq0.
\]
From this differential inequality,  we immediately infer the bound
\[
\|q_{tt}(t)\|_0^2\leq \|q_{tt}(0)\| e^{-(2\lambda_1-O(\epsilon))t}.
\]
From the elliptic estimate in Step 2, it finally follows that
\[
x(t)\leq C \|q_{tt}(0)\|_0^2e^{-(2\lambda_1-O(\epsilon))t} \leq C' E_{\beta}(0)e^{-(2\lambda_1-O(\epsilon))t}.
\]
Since $E_{\beta}(t)= x(t) e^{\beta t}$ and $\beta =2\lambda_1-\eta < 2\lambda_1 -O(\epsilon)$ for $\epsilon$ sufficiently small, it is now clear that we can choose $\tilde{C}$ so that
on the time interval of validity of bootstrap assumptions~(\ref{eq:assumption1}) and~(\ref{eq:assumption2}) we actually
have the improved bound
$
E_{\beta}(t)\leq \frac{\tilde{C}}{2}e^{-\beta t}.
$
\end{proof}
\subsection{Pointwise positivity of $\g_Nq_t$ at time $T_K=\bar{C}\ln K$}\label{se:positive}
\begin{lemma}\label{lm:positive}
Assume that the solution $(q,h)$ to the Stefan problem~(\ref{eq:ALE}) exists on a given time interval $[0,T]$.
Let the bootstrap assumptions~(\ref{eq:assumption1}) and~(\ref{eq:assumption2}) hold on that time interval
with $\epsilon>0$ sufficiently small, and assume the smallness assumption~(\ref{eq:smallassum}) for the initial data.
There exists a universal constant $\bar{C}$ such
that if $T\geq  T_K:=\bar{C}\ln K$,  then
\[
-q_t(T_K,x)>Cc_1e^{-\lambda_1T_K}\varphi_1(x),\quad x\in B_1(0),
\]
where $\varphi_1$ is the first eigenfunction of the Dirichlet-Laplacian on $\Omega$ and $c_1=\int_{\Omega}q_0\varphi_1\,dx$.
As a consequence,
\[
 \inf_{x\in\Gamma}\g_Nq_t(T_K,x)>0.
\]
\end{lemma}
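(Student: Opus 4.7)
The plan is to compare the Stefan dynamics at time $T_K$ with the ordinary linear heat flow on the fixed unit disc, exploiting the fact that the eigenfunction expansion of $q_0$ is dominated by its first mode as soon as sufficient time has elapsed. Let $\tilde q$ denote the solution of the heat equation on $\Omega$ with Dirichlet data and $\tilde q(0,\cdot)=q_0$, so that in the $L^2$-orthonormal Dirichlet-Laplacian basis $\tilde q(t,x)=\sum_{j\ge 1}c_j e^{-\lambda_j t}\varphi_j(x)$. Term-by-term time differentiation gives
\begin{equation*}
-\tilde q_t(t,x)=\lambda_1 c_1 e^{-\lambda_1 t}\varphi_1(x)+\sum_{j\ge 2}\lambda_j c_j e^{-\lambda_j t}\varphi_j(x).
\end{equation*}
Corollary~\ref{cor:hardy} yields $|c_j|\le K c_1$, while classical facts on the disc (Weyl's law $\lambda_j\sim j$ and the pointwise Hopf-rate comparison $|\varphi_j(x)|\le C\lambda_j^{\alpha}\varphi_1(x)$) allow me to bound the tail pointwise by $C K c_1\varphi_1(x)\cdot G(t)$, with $G(t)$ decaying like $e^{-\lambda_2 t}$ up to algebraic factors. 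Choosing $\bar C$ large enough so that at $t=T_K=\bar C\ln K$ one has $K\cdot G(T_K)\le \tfrac{\lambda_1}{4}e^{-\lambda_1 T_K}$, I obtain
\begin{equation*}
-\tilde q_t(T_K,x)\ge \tfrac{3\lambda_1}{4}\,c_1 e^{-\lambda_1 T_K}\varphi_1(x)\quad\text{for every }x\in B_1(0).
\end{equation*}

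Next I control the discrepancy $w:=q-\tilde q$. Since $q$ satisfies the ALE heat equation (\ref{eq:ALEheat}) and $\tilde q$ satisfies the flat heat equation on the same reference domain, $w$ solves a linear parabolic equation with homogeneous Dirichlet data and forcing
\begin{equation*}
F=\bigl(A^j_i A^k_i-\delta^{jk}\bigr)q,_{kj}+A^k_i,_j A^j_i\,q,_k-v\cdot\Psi_t.
\end{equation*}
By Lemmas~\ref{lm:basic}, \ref{lm:useful0}, \ref{lm:basic2}, and \ref{lm:1}, each factor $A-\operatorname{Id}$, $\Psi-e$, $\Psi_t$ is of size $O(\sqrt\epsilon)$ in high-regularity norms with exponential-in-time decay, so that $|F(t,x)|\lesssim \sqrt\epsilon\, c_1 e^{-\lambda_1 t}\varphi_1(x)$ up to polynomial-in-$t$ factors. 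Differentiating the equation for $w$ in time and applying an Oddson-type barrier comparison (Lemma~\ref{lm:oddson}) to $\pm w_t$ on $[0,T_K]$, together with the Hopf-rate character of $\varphi_1$, yields
\begin{equation*}
|w_t(T_K,x)|\le C\sqrt\epsilon\, c_1 e^{-\lambda_1 T_K}\varphi_1(x).
\end{equation*}
For $\epsilon$ sufficiently small (which is permitted by the smallness hypothesis~(\ref{eq:smallassum}) together with the explicit form of $F(K)$ in~(\ref{eq:F})) combining the two bounds produces
\begin{equation*}
-q_t(T_K,x)\ge \tfrac{\lambda_1}{2}\,c_1 e^{-\lambda_1 T_K}\varphi_1(x)\quad\text{in }B_1(0),
\end{equation*}
which is the first claim. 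Since $q_t|_\Gamma=0$ and $-\partial_N\varphi_1\ge c>0$ on $\Gamma$, this pointwise inequality immediately forces $\inf_{x\in\Gamma}\partial_N q_t(T_K,x)>0$, finishing the proof.

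The main obstacle is the second step: I need a bound on $w_t$ (not merely on $w$) that vanishes at $\Gamma$ with the same linear rate as $\varphi_1$ itself. A pure $L^\infty$ or $L^2$ bound would be useless, since the quantity to beat is $c_1 e^{-\lambda_1 T_K}\varphi_1$, which is extremely small near $\Gamma$ and for large $T_K$. What is actually required is a profile comparison against $\varphi_1$; this is why Oddson-type barriers enter twice, once to produce the lower bound for the principal mode of $-\tilde q_t$ and once to bound the perturbation $|w_t|$ by a multiple of $c_1 e^{-\lambda_1 T_K}\varphi_1$. Preserving this Hopf-rate profile through the nonlinear error terms is the delicate technical step where the detailed $K$-dependent smallness of the initial data is genuinely used.
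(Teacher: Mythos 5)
Your high-level decomposition is essentially the same as the paper's: you write $-q_t = -\tilde q_t - w_t$, where $\tilde q=e^{t\Delta}q_0$ is the linear Dirichlet heat flow; the paper's $X$ is exactly your $-\tilde q_t$, and the paper's $Y+Z$ (from the Duhamel expansion of $q_{tt}-\Delta q_t=N(q,h)$) is exactly your $w_t$, since $w_t(0)=\nabla q_0\cdot w_0$ and $w_t$ is driven by the same nonlinearity. Your Step 1 is therefore sound in outline and matches the paper's Step 3, modulo the fact that the paper controls $\varphi_j/\varphi_1$ via the higher-order Hardy inequality $\sup_x|f/\varphi_1|\lesssim\|f\|_{2.25}$ rather than a quoted pointwise bound $|\varphi_j|\le C\lambda_j^\alpha\varphi_1$; both routes deliver $|\sigma|<1/2$ at $t=T_K$ once $\bar C$ is chosen large against the spectral gap.

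The genuine gap is in Step 2. Your proposed bound $|F(t,x)|\lesssim\sqrt\epsilon\,c_1 e^{-\lambda_1 t}\varphi_1(x)$ is false: $F=(A^j_iA^k_i-\delta^{jk})q,_{kj}+A^k_i,_jA^j_iq,_k-v\cdot\Psi_t$ involves non-tangential first and second derivatives of $q$ and the vectors $v,\Psi_t$, none of which vanish on $\Gamma$, so $F$ does not vanish at the boundary at the linear rate of $\varphi_1$; it is only small in amplitude, not in profile. An Oddson-type barrier for $\pm w_t$ would then have to dominate both the inhomogeneity $F_t$ and the nonzero initial data $w_t(0)=\nabla q_0\cdot w_0$ (which you do not address), and the standard device of adding a ``bulge'' to the barrier to absorb a bounded forcing destroys exactly the $\varphi_1$-profile you need at the free boundary. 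This is precisely why the paper does not run a barrier argument for the remainder. Instead it represents $w_t=Y+Z$ by the Duhamel formula and then exploits that $e^{(t-s)\Delta}N(s)$ is \emph{automatically} in $H^1_0$ (because the Dirichlet semigroup acts), so the Hardy inequality $\sup_x\big|e^{(t-s)\Delta}N(s)/\varphi_1\big|\lesssim\|e^{(t-s)\Delta}N(s)\|_{2.25}\le\|N(s)\|_{2.25}$ delivers the $\varphi_1$-comparison \emph{without requiring any boundary vanishing of $N$ itself}. The time integral is then closed by H\"older and the energy bounds, yielding a $\sqrt{T_K}$ factor that is absorbed by the $K$-dependent smallness built into $F(K)$. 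To repair your argument, replace the barrier step by this Duhamel-plus-Hardy mechanism (which you already almost have, since your $w_t$ is the paper's $Y+Z$); the Oddson barrier is used in the paper only in Appendix B, and there to obtain an \emph{upper} bound on $|\partial_N q_t|$ for a solution of the full ALE equation, not to control the discrepancy $w_t$ against the flat flow.
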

\begin{proof}
{\em Step 1.  Hardy-type estimate.} As a consequence of the higher-order Hardy inequality (see Lemma 1 in  \cite{CoSh12}) and 
the Sobolev embedding theorem, 
for any $f\in H^{2.25}(B_1(0))\cap H^1_0(B_1(0))$,
\be\label{eq:veryuseful}
\sup_{x\in B_1(0)}\left|\frac{f(x)}{\varphi_1(x)}\right|
\leq C\|f\|_{2.25},
\ee
where $\varphi_1$ is the first eigenfunction of the Dirichlet-Laplacian on the unit ball.

\vspace{.1 in}
\noindent
{\em Step 2. The Duhamel formula.}
Let 
\begin{equation}\nonumber
q_0=\sum_{j=1}^{\infty}c_j\varphi_j
\end{equation} 
be the eigenvector decomposition of the initial datum $q_0$ with respect to the $L^2$ orthonormal basis
$\{\varphi_1,\varphi_2,\dots\}$ associated with the Dirichlet-Laplacian on the unit disk $B_1(0)$.
Writing the time-differentiated Stefan problem as a perturbation of the linear heat equation, we see that in $\Omega$,  $q_t$ satisfies
\begin{equation}\label{sss1}
q_{tt}-\Delta q_t=N(q,h),
\end{equation} 
where 
\begin{equation}\label{eq:inh}
N(q,h):=(a_{ij}-\delta_{ij})q_{t,ij}+b_iq_{t,i}+a_{ij,t}q,_{ij}+b_{i,t}q,_i+A_{,it}^kq,_kw^i+A^k_iq,_kw^i_t\,,
\end{equation} 
and the coefficients $a_{ij}$, $b_i$ are defined in (\ref{eq:coeff}).
Note that at time $t=0$,
$q_t(0)=\Delta q_0+\nabla q_0\cdot w_0$;
moreover, since $\Delta \varphi_j=-\lambda_j\varphi_j$ and $e^{t \Delta}$ is a linear semi-group , 
 the Duhamel principle implies that the solution $q_t$ to (\ref{sss1}) can be written as
\[
-q_t=\underbrace{c_1\lambda_1e^{-\lambda_1t}\varphi_1+\sum_{j=2}^{\infty}c_j\lambda_je^{-\lambda_jt}\varphi_j}_{=:X}
-\underbrace{e^{t\Delta}(\nabla q_0\cdot w_0)}_{=:Y}-\underbrace{\int_0^te^{(t-s)\Delta}N(q,h)}_{=:Z} \,.
\]

We first prove that $X(t) >0$ for times $t=\bar{C}\ln K$, where $\bar C$ denotes a universal constant.
We shall then show that at time $t=\bar{C}\ln K$, $|Y(t)|+|Z(t)|$ is bounded by a small fraction of $X(t)$.

\vspace{.1 in}
\noindent
{\em Step 3. Estimate of $X$.}
We begin by writing  $X$ as
\be\label{eq:X}
\begin{array}{l}
\displaystyle
X(t,x)= c_1\lambda_1e^{-\lambda_1 t}\varphi_1(x)
+ c_1\lambda_1e^{-\lambda_1 t}\varphi_1(x)
\Big(\sum_{j=2}^{\infty}\frac{c_j\lambda_j}{c_1\lambda_1}e^{(\lambda_1-\lambda_j)t}
\frac{\varphi_j(x)}{\varphi_1(x)}\Big).
\end{array}
\ee
Our goal is to prove that the term 
\be\label{eq:sigma}
\sigma:=\sum_{j=2}^{\infty}\frac{c_j\lambda_j}{c_1\lambda_1}e^{(\lambda_1-\lambda_j)t}
\frac{\varphi_j(x)}{\varphi_1(x)}
\ee
is small. By Corollary \ref{cor:hardy}, 
\begin{equation}\label{sss2}
\frac{|c_j|}{c_1}\leq K \text{ for all integers } j \ge 2\,.
\end{equation} 
Furthermore, using the normalization $\|\varphi_j\|_0=1$,  and the eigenvalue problem,
$\Delta\varphi_j=-\lambda_j\varphi_j$, elliptic regularity shows that $\| \varphi\|_2 \le \lambda_j$ and
that $\| \varphi\|_4 \le \lambda_j^2$; hence, linear interpolation provides us with the inequality
\begin{equation}\label{sss3}
\|\varphi_j\|_{2.25}\lesssim\lambda_j^{1.25} \,.
\end{equation}

Using (\ref{sss2}) and (\ref{sss3}),  together with the bound (\ref{eq:veryuseful}), we see that
$$
|\sigma|\leq CK\sum_{j=2}^{\infty}\lambda_j^{2.25}e^{(\lambda_1-\lambda_j)t} \,.
$$
Since $\lambda_1<\lambda_2\leq\lambda_3\leq\dots$, 
 there exists a constant $c^*$, uniform in $j \geq 2$, 
such that $\lambda_1/\lambda_j<(1-2c^*)$.  This implies that
$$(\lambda_1-\lambda_j)<-2c^*\lambda_j  \text{ for integers } j\ge 2 \,. $$
In particular, for $t\geq\bar{C}\ln K$
\[
CK\lambda_j^{2.25}e^{-c^*\lambda_jt}\leq CK\lambda_j^{2.25}K^{-\bar{C}c^*\lambda_j}
=C\frac{\lambda_j^{2.25}}{K^{\bar{C}c^*\lambda_j-1}}<\frac{1}{2}
\]
for $\bar{C}$ chosen sufficiently large, but independent of $K$.
(Recall  that $K>1$ since $K\geq\frac{\|q_0\|_1}{\|q_0\|_0}\geq1+\lambda_1>6$.)
Hence, from~(\ref{eq:sigma}) and the previous inequality it follows
\[
|\sigma|\leq\frac{1}{2}\sum_{j=2}^{\infty}e^{-c^*\lambda_jt}
\leq\frac{1}{2}\sum_{j=2}^{\infty}K^{-\bar{C}c^*\lambda_j}<\frac{1}{2}.
\]
Plugging this into~(\ref{eq:X}), we obtain for any $x\in B_1(0)$
\be\label{eq:X1}
X(t,x)\geq\frac{1}{2} c_1\lambda_1e^{-\lambda_1 t}\varphi_1(x)>0,\,\,\,\, t\geq\bar{C}\ln K.
\ee

\noindent
{\em Step 4. Estimates of  $Y$ and $Z$.}
The term $Y$ satisfies the estimate
\[
\|Y(t,x)\|_{L^{\infty}}\lesssim\|e^{t\Delta}(\nabla q_0\cdot w_0)\|_{2}
\lesssim e^{-\lambda_1 t}\|\nabla q_0\cdot w_0\|_{2}
\lesssim e^{-\lambda_1 t}\|q_0\|_3\|w_0\|_{2}
\lesssim \epsilon c_1 e^{-\lambda_1 t},
\]
where we used the Sobolev embedding theorem together with the bound $\|q_0\|_3\lesssim K c_1$,
which follows from $\|q_0\|_4/\|q_0\|_0\leq K$.
Thus $|Y(t,x)|<\frac{1}{4}|X(t,x)|$ with $\epsilon$ sufficiently small.   Next, to estimate $Z$ which vanishes at the boundary, 
we have that
\beas
\frac{|Z|}{\varphi_1(x)}&\leq& \int_{0}^t\big|\frac{e^{\Delta(t-s)}N(q,h)(s)}{\varphi_1(x)}\big|\,ds
\lesssim \int_{0}^t\|e^{\Delta(t-s)}N(q,h)(s)\|_{2.25}\,ds\\
&\lesssim& \int_{0}^t\|N(q,h)(s)\|_{2.25}\,ds \lesssim \sqrt{t}\big(\int_{0}^t\|N(q,h)(s)\|_{2.25}^2\,ds\big)^{1/2}.
\eeas
In the above chain of inequalities, we have used the bound~(\ref{eq:veryuseful}) for the second inequality, and the fact that 
$\|e^{t\Delta}\|_{H^s\to H^s}\leq1$. 

We shall conclude our estimate by showing that 
\be\label{eq:Nbound}
\int_{0}^t\|N(q,h)(s)\|_{2.25}^2\,ds\lesssim c_1^{11/5}\epsilon^{9/5}.
\ee
We recall that
\be\tag{\ref{eq:inh}}
N(q,h):=\underbrace{(a_{ij}-\delta_{ij})q_{t,ij}}_{=:Z_1}
+\underbrace{b_iq_{t,i}+a_{ij,t}q,_{ij}+b_{i,t}q,_i+A_{,it}^kq,_kw^i+A^k_iq,_kw^i_t}_{=:Z_2}\,,
\ee
and note that $Z_1$ is the highest-order term with respect to the number of derivatives applied to $q$.  Writing $Z_1=(a- \operatorname{Id} )D^2q_t$, where $\operatorname{Id} $ 
denotes the identity 
matrix, we see that
\[
\int_0^t\|Z_1\|_{2.25}^2\,ds \lesssim \|a-\operatorname{Id} \|_{2.25}^2\|D^2q_t\|_{2.25}^2
\lesssim \sup_{0\leq s\leq t}\|a-\operatorname{Id} \|_{2.25}^2\int_0^t\|q_t\|_{4.25}^2.
\]
From the sharp estimate~(\ref{re:height1}), we infer that $\sup_{0\leq s\leq t}\|a-\operatorname{Id} \|^2_{2.25}\lesssim  c_1^2$;
furthermore, for the term $\|q_t\|_{4.25}$ we apply the interpolation estimate
$\|q_t\|^2_{4.25}\lesssim\|q_t\|_2^{1/5}\|q_t\|_{4.5}^{9/5}\lesssim c_1^{1/5}e^{-\beta t/10}\|q_t\|_{4.5}^{9/5}$.

Using Lemma \ref{lm:2}, we then infer that
\[
\int_0^t\|Z_1\|_{2.25}^2\,ds\lesssim c_1^{11/5}\int_0^t e^{-\beta t/10}\|q_t\|_{4.5}^{9/5}\lesssim
c_1^{11/5}\epsilon^{9/5} \,,
\]
the last inequality following from H\"{o}lder's inequality and the fact that 
 $\int_0^t\|q_t\|_{4.5}^2\lesssim \epsilon^2$ by Lemma \ref{lm:2} and the bootstrap assumption~(\ref{eq:assumption1}).

Analogous estimates are applied to the term $Z_2$ to finally deduce~(\ref{eq:Nbound}).
By~(\ref{eq:Nbound}) and the above chain of estimates,  it follows that
\[
\frac{|Z|}{\varphi_1(x)}\lesssim \sqrt{t} c_1^{11/10}\epsilon^{9/10}.
\]
Hence, at time $T=\bar{C}\ln K$
\begin{align*}
|Z(T,x)| & \lesssim \bar{C}^{1/2}\ln K^{1/2} c_1 c_1^{1/10}\epsilon^{9/10}\varphi_1(x)\lesssim c_1\frac{\epsilon_0^{1/10}}{F(K)^{1/20}}
\bar{C}^{1/2}\ln K^{1/2}\epsilon^{9/10}\varphi_1(x)\\
& \le c_1\frac{\epsilon}{F(K)^{1/20}}\bar{C}^{1/2}\ln K^{1/2}\varphi_1(x)<\frac{1}{4} c_1\lambda_1e^{-\lambda_1 T}\varphi_1(x)
\leq \frac{1}{2}X(t,x).
\end{align*}
Note that we have used the estimate $c_1^{1/10}\lesssim \epsilon_0^{1/10}/F(K)^{1/20}$ (which follows from
$\|q_0\|\lesssim\E(0)^{1/2}$ and the smallness assumption~(\ref{eq:smallassum})) as well as $\epsilon_0\leq\epsilon$ which is going to hold
by our choice of $\epsilon_0$.
Observe that the very last inequality follows from~(\ref{eq:X1}). The next-to-last bound
is equivalent to 
\[
\frac{\epsilon}{F(K)^{1/20}}<\frac{\lambda_1}{4\bar{C}^{1/2}\ln K^{1/2}K^{\bar{C}\lambda_1}},
\]
which then follows from the choice~(\ref{eq:F}) of the function $F(K)$ in Remark~\ref{re:fk}. The second inequality above follows
from the estimate
$c_1\lesssim\|q_0\|\lesssim\E(0)^{1/2}\lesssim\epsilon/F(K)^{1/2}$.

\vspace{.1 in}
\noindent
{\em Step 5. Finishing the proof.}
From the above estimates on $X$, $Y$, and $Z$
it finally follows that for any $x\in B_1(0)$, $T=\bar{C}\ln K$,
\beas
-q_t(T,x)&\geq& |X(T,x)|-|Y(T,x)|-|Z(t,x)|
\geq X(T,x)-\frac{1}{2}X(T,x)-\frac{1}{4}X(T,x)=\frac{1}{4}X(t,x)\\
&\geq& 
c^*c_1e^{-\lambda_1 T}\varphi_1(x).
\eeas
Finally, since $\g_N\varphi_1\geq c$ for some $c>0$ uniformly over $\Gamma$ and since $\varphi_1>0$ in $\Omega$,
it follows $\inf_{x\in\Gamma}\g_Nq_t(T,x)>0$.
\end{proof}
\subsection{Proof of Theorem~\ref{th:main}}\label{se:mainproof}
{\em Step 1.}
By Proposition~\ref{lm:apriori1} with $\epsilon$ sufficiently small,  we conclude that
\be\label{eq:gron1}
\sup_{0\leq t\leq T}\E(t)+\int_0^T\D(t)\,dt\leq2\E(0)+CK^2\int_0^Te^{\eta t}\E(t)\,dt,\quad t\in[0,\mathcal{T}],
\ee
where $\mathcal{T}$ is the maximal interval of existence on which the
bootstrap assumptions~(\ref{eq:assumption1}) and~(\ref{eq:assumption2}) hold (with $\epsilon$ sufficiently small).
Our goal is to prove that on $[0,\mathcal{T}]$, the quantity $\E(t)$ is bounded from above by $2\E(0)e^{2CK^2t}$. 
We shall accomplish that by bounding $\E(t)$ from above by the function
$g(t):\R^+\to\R$, which is defined as the solution of the differential equation
\[
g'(t)=CK^2e^{\eta t}g(t),\quad g(0)=2\E(0).
\]
Solving this differential equation, we obtain
\beas
g(t)&=&2\E(0)e^{\frac{CK^2}{\eta}(e^{\eta t}-1)}=2\E(0)e^{(1+\sum_{k=2}^{\infty}\eta^{k-1}t^k/k!)CK^2t}\\
&=&2\E(0)e^{(1+O(\eta))CK^2t}\leq 2\E(0)e^{2CK^2t},
\eeas
where the convergence of the sum $\sum_{k=2}^{\infty}\eta^{k-1}t^k/k!$ is guaranteed for times $t\leq\frac{1}{\sqrt{\eta}}$.
Applying the integral Gronwall inequality to the difference $\E(t)-g(t)$, it follows from~(\ref{eq:gron1}) and the previous inequality that 
\[
\E(t)\leq g(t)\leq 2\E(0)e^{2CK^2t}
\]
for any $t\leq \mathcal{T}$. Our goal is to prove that $\mathcal{T}\geq\bar{C}\ln K$.
Using~(\ref{eq:gron1}) once again, we obtain the same smallness bound on $\int_0^t\D(s)\,ds$ to finally conclude that
\be\label{eq:intermed}
\sup_{0\leq s\leq t}\E(s)+\int_0^t\D(s)\,ds\leq 2\E(0)e^{2CK^2t}.
\ee
For $t\leq \bar{C}\ln K$, (\ref{eq:intermed}) and smallness assumption~(\ref{eq:smallassum}) on $\E(0)$ implies that 
\[
\sup_{0\leq s\leq t}\E(s)+\int_0^t\D(s)\,ds\leq \epsilon/2.
\]
Moreover, by Lemma~\ref{lm:heat} and since the bootstrap assumptions~(\ref{eq:assumption1}) 
and~(\ref{eq:assumption2}) are valid on $[0,\mathcal{T}]$ it follows that
\[
E_{\beta}(t)+\int_0^tD(s)\,ds<\frac{\tilde{C}}{2}E_{\beta}(0).
\]
Thus, by the continuity of $\E+E_{\beta}$ and the maximality of $\mathcal{T}$, we conclude 
$\min\{\mathcal{T},\bar{C}\ln K\}=\bar{C}\ln K=T_K$ since the bootstrap assumptions are still satisfied at time $t=\bar{C}\ln K$
(the argument is true as long as $\eta$ above is chosen in such a way that $\frac{1}{\sqrt{\eta}}>\bar{C}\ln K$).
By the local well-posedness theorem from~\cite{HaSh} and the continuity 
of $\E$ and $E_{\beta}$ in time, we actually have the strict
inequality $\mathcal{T}> T_K$
as we can extend the solution locally in time. 
We will argue by contradiction that $\mathcal{T}=\infty$. Assume
$\mathcal{T}<\infty$. 

\vspace{.1 in}
\noindent
{\em Step 2. Preserving the positivity of $\g_Nq_t$.}
We next show that $\g_Nq_t>0$ on the time interval $[T_K,\mathcal{T}[$.
This will be done with help of Lemma~\ref{lm:positive} and the maximum principle.
We start by constructing a suitable comparison function,
\be\label{eq:barrier}
P(t,r)=\kappa_1 e^{-\frac{3}{2}\lambda_1t}(\varphi_1(r)-\kappa_2(1-r^2)),
\ee
with positive constants $\kappa_1,\kappa_2$ to be specified later.
A straightforward calculation shows that
\begin{align}
(\g_t-a_{ij}\g_{ij}-b_i\g_i)P
 & =\kappa_1e^{-\frac{3}{2}\lambda_1t}\big[-\frac{1}{2}\lambda_1\varphi_1-2\kappa_2\text{Tr}(a) \nonumber \\
& \ \ +\frac{3}{2}\lambda_1\kappa_2(1-r^2)-(a_{ij}-\delta_{ij})\varphi_1-b\cdot(\nabla\varphi_1+2\kappa_2x)\big]. \label{eq:comp}
\end{align}
Observe that both $\varphi_1$ and $(1-r^2)$ vanish for $r=1$, the trace of the matrix $a$ is very close to $2$, i.e.,
$a_{11}+a_{22}=2+O(\epsilon )$ and the coefficients $b_i$ are very small, i.e. $|b|=O(\epsilon )$. Note that
the first and the second term in the parenthesis on the right-hand side of~(\ref{eq:comp}) are negative, while the fourth and the fifth term are 
small of order $\epsilon$.
If $r=|x|$ is close to $1$, then the second term dominates the third term and if $r$ is away from the boundary $r=1$, then one can choose $\kappa_2>0$
so that the first term dominates the third term. 
It follows easily that there exists a $\kappa_2>0$ and some constant $C_1$ such that
\be\label{eq:neg}
(\g_t-a_{ij}\g_{ij}-b_i\g_i)P<-C_1\kappa_1e^{-\frac{3}{2}\lambda_1t}.
\ee
It then follows from~(\ref{eq:neg}) and~(\ref{eq:coeff}) that
\be\label{eq:neg1}
(\g_t-a_{ij}\g_{ij}-b_i\g_i)(-q_t-P)>-(\g_ta_{ij}\,q,_{ij}+\g_tb_{i}\,q_i+\g_tA_{,i}^k\,q,_kw^i+A^k_iq,_kw^i_t)+C_1\kappa_1e^{-\frac{3}{2}\lambda_1t}.
\ee
Note, however, that the term in parenthesis on the right-hand side above
is a quadratic non-linearity and as such decays at least
as fast as $e^{-2\beta t}$:
\begin{align}
& \|\g_ta_{ij}\,q,_{ij}+\g_tb_{i}\,q_i+\g_tA_{,i}^k\,q,_kw^i+A^k_iq,_kw^i_t\|_{L^{\infty}} \nonumber \\
& \leq\|\g_ta_{ij}\,q,_{ij}\|_{1+\delta}+\|\g_tb_{i}\,q_i\|_{1+\delta}+\|\g_tA_{,i}^k\,q,_kw^i\|_{1+\delta}+\|A^k_iq,_kw^i_t\|_{1+\delta} \nonumber \\
& \leq C_2E_{\beta}(0)^{1/2}\epsilon e^{-2\beta t}\leq C_2c_1\epsilon e^{-2\beta t}. \label{eq:track}
\end{align}
Now, using~(\ref{eq:neg1}) and the above bound, we note that by choosing the constant $\kappa_1:=\frac{C_2}{C_1}c_1\epsilon $, 
we have that
\[
(\g_t-a_{ij}\g_{ij}-b_i\g_i)(-q_t-P)>C_2c_1\epsilon e^{-\frac{3}{2}\lambda_1t}-C_2c_1\epsilon e^{-2\beta t}>0,
\]
since $2\beta=2\lambda_1-\eta>\frac{3}{2}\lambda_1$.
The previous bound implies that $-q_t-P$ is a supersolution for the operator $\g_t-a_{ij}\g_{ij}-b_i\g_i$. Moreover,
by the construction of $P$, we have $-q_t-P=0$ on $\Gamma=\partial B_1(0)$.
Furthermore, at time $T_K=\bar{C}\ln K$, we have by Lemma~\ref{lm:positive} and~(\ref{eq:barrier}), that
\[
(-q_t-P)|_{T=\bar{C}\ln K}>Cc_1e^{-\lambda_1T}\varphi_1(x)-Cc_1\epsilon e^{-\frac{3}{2}\lambda_1T}\varphi_1(x)
+Cc_1\epsilon \kappa_2e^{-\frac{3}{2}\lambda_1T}(1-r^2)>0
\]
for $\epsilon$ sufficiently small. Thus, by Lemma~\ref{lm:oddson}, there exists a constant $m>0$ such that
\[
-q_t-P\geq m(1-r)e^{-(\lambda_1-O(\epsilon ))t},\,\,\,\,t>T_K,
\]
or in other words
\beas
-q_t&\geq& m(1-r)e^{-(\lambda_1-O(\epsilon ))t}+Cc_1\epsilon (1-r)e^{-\frac{3}{2}\lambda_1t}(\frac{\varphi_1(r)}{1-r}-\kappa_2(1+r))\\
&=& (1-r)e^{-(\lambda_1-O(\epsilon ))t}
\Big(m+Cc_1\epsilon e^{(-\frac{1}{2}\lambda_1t-O(\epsilon ))t}(\frac{\varphi_1(r)}{1-r}-\kappa_2(1+r))\Big),
\eeas
which readily gives the positivity of $\g_Nq_t$ on the time-interval $[T_K,\mathcal{T}[$ since $\frac{\varphi_1(r)}{1-r}-\kappa_2(1+r)>0$
by our choice of $\kappa_2$ above.
We conclude that the positivity of $-q_t$ at time $T_K=\bar{C}\ln K$ is a property preserved by our bootstrap regime and
moreover we get a quantitative lower bound on $\g_Nq_t$ on the time interval $[T_K,\mathcal{T}[$.

\vspace{.1 in}
\noindent
{\em Step 3. Conclusion.}
Thus for any $t\in[T_K,\mathcal{T}[$, the energy identity takes the form
\beas
&&\E(t)+\int_{T_K}^t\D(t)
+\frac{1}{2}\sum_{j=0}^3\int_{T_K}
^t\int_{\Gamma}\g_Nq_tR_J^2|\t^{6-2j}\g_t^jh|^2\,dx\\
&&
=\E(T_K)+\sum_{i=1}^4\int_{T_K}^t\int_{\Omega}\{\mathcal{R}_i+\mathcal{S}_i\}
+\sum_{i=0}^4\int_{T_K}
^t\int_{\Omega}\{\tilde{\mathcal{R}}_i+\tilde{\mathcal{S}}_i\}
+\sum_{i=0}^4\int_{T_K}
^t\int_{\Gamma}\{\mathcal{G}_i+\mathcal{H}_i\},
\eeas
where we formally define $\mathcal{S}_4=\tilde{\mathcal{S}}_4=\mathcal{G}_4=0$.
In particular, by the energy estimates stated in~(\ref{eq:error2}) and~(\ref{eq:error3}) the 
right-hand side of the above identity is bounded by
\[
\E(T_K)+O(\sqrt{\epsilon} )\sup_{{T_K}\leq s\leq t}\E(s)+
(O(\epsilon )+\delta)\int_{T_K}
^t\D(s)\,ds.
\]
Note here the absence of the exponentially growing term in the above bound 
as opposed to their presence in Proposition~\ref{lm:apriori1}. This is due to the fact that
terms $\int_{T_K}
^t\int_{\Gamma}\g_Nq_tR_J^2|\t^{6-2j}\g_t^jh|^2\,dx$, $j=0,1,2,3$ are positive and no longer treated as error terms.
By absorbing the small multiples of $\sup_{T_K\leq s\leq t}\E(s)$ and $\int_{T_K}
^t\D(s)\,ds$ into the left-hand
side and using the positivity of $\g_Nq_t$ from Step 2, we obtain
\be\label{eq:smallness2}
\sup_{T_K\leq s\leq t}\E(s)+\int_{T_K}
^t\D(s)\,ds
\leq 2\E(T_K)\leq 8\E(0)e^{2CK^2T_K}
\ee
by~(\ref{eq:intermed}).
Finally, we choose $\epsilon_0$ in the statement of Theorem~\ref{th:main} so that $\epsilon_0^2<\epsilon^2/2$.
Bound~(\ref{eq:smallness2}) and the condition $\E(0)\lesssim\epsilon_0^2/F(K)$ (with $F(K)$ given as in~(\ref{eq:F}))
imply
\[
\sup_{T_K\leq s\leq t}\E(s)+\int_{T_K}
^t\D(s)\,ds
\leq \frac{\epsilon^2}{2}.
\]
Together with Lemma~\ref{lm:heat} and Corollary~\ref{co:oddson}, we infer that the bootstrap assumptions~(\ref{eq:assumption1}) and~(\ref{eq:assumption2}) are improved.
Since $\E$ is continuous in time, we can
extend the solution by the local well-posedness theory to an interval $[0,\mathcal{T}+T^*]$
for some small positive time $T^*$. This however contradicts the maximality of $\mathcal{T}$ and
hence $\mathcal{T}=\infty$.
This concludes the proof of the main theorem.
\section{The $d$-dimensional case on general near-spherical domains}\label{se:general}
In this section we briefly sketch the set-up of the problem in general dimensions and explain how to adapt the arguments
from the 2-D case to the 3-D case.
Let $\Omega(t)\subset\R^d$ be an open simply connected subset of $\R^d$, $d\geq2$.
The moving boundary $\Gamma(t)=\partial\Omega(t)$ is parametrized as a graph over the unit sphere $\K^{d-1}$
\[
\Gamma(t)=\{x\,|\,\,\,x=R(t,\xi)\xi=(1+h(t,\xi))\xi,\,\,\,\,\xi\in\K^{d-1}\}.
\]
Initially $R_0(\xi)$ is assumed to be close to $1$, i.e.
$
R_0(\xi)-1=h_0(\xi)=O(\epsilon).
$
We shall assume that $\Omega_0$ is diffeomorphic to $B_1(0)$, where $\Phi:\Omega\to\Omega_0$ is the diffeomorphism mapping of
the unit ball onto the initial domain. Moreover, let $\tilde{\Psi}$ denote the family of diffeomorphisms  from the initial domain $\Omega_0$ to the moving domain $\Omega(t)$,
satisfying the harmonic equation $\Delta\tilde{\Psi}=0$ and the boundary condition $\tilde{\Psi}(\Gamma_0)=\Gamma(t)$.
We shall pull back the Stefan problem onto the unit ball $B_1(0)$ via the map $\Psi:B_1(0)\to\Omega(t)$
given as a composition of $\tilde{\Psi}$ and $\Phi$:
\[
\Psi=\tilde{\Psi}\circ\Phi.
\]
Upon defining $q$, $v$, $w$, and $A$ just as in Section~(\ref{eq:ALE}), 
the Stefan problem~(\ref{eq:stefan}) takes exactly the same form as~(\ref{eq:ALE}).
Abusing the notation, the normal velocity $\mathcal{V}(\Gamma(t))$ is now given
by
\[
\mathcal{V}(\Gamma(t))=\frac{R_tR}{\sqrt{R^2+|\nabla R|_{\K^{d-1}}^2}}.
\]
Here $|\nabla R|_{\K^{d-1}}^2$ stands for the squared norm of the Riemannian gradient of $R(t,\cdot)$ on the unit sphere $\K^{d-1}$, 
which is a coordinate invariant expression. The gauge equation for $\Psi$ transforms
into
\[
\Delta_{\Phi^{-1}}\Psi=0,\qquad \Psi(t,\K^{d-1})=\Gamma(t)
\]
due to the assumption $\Delta\tilde{\Psi}=0$ and the definition of $\Psi$.
This easily implies the optimal trace bound $\|\Psi\|_{H^s(\Omega)}\lesssim|\Psi|_{H^{s-0.5}}(\Omega)$ due to the the smoothness of
$\Phi$ and the {\em closeness} assumption $\|D\Phi-\text{Id}\|_{H^s}\lesssim \epsilon$, with $s$ sufficiently large.  When $d=3$,
the Sobolev embedding theorem  requires us   to raise the degree of spatial regularity in the definition our energy spaces by one derivative.

The second key observation is that the lower bound for the quantity $\chi(t)$ is obtained in the same 
way as in the case that $d=2$,  from Lemma~\ref{lm:oddson}.
We  $\nu_1$ denote the first eigenvalue of the operator $-\Delta_{\Phi^{-1}}$, 
which is the pull-back of the negative Laplacian from the initial domain $\Omega_0$ to $B_1(0)$.
By Lemma~\ref{lm:oddson} we obtain that
\[
\chi(t)\gtrsim c_1e^{-\lambda t},
\]
where $|\lambda-\nu_1|\leq O(|h-h_0|_{W^{2,\infty}}+|h_t|_{L^{\infty}})=O(\epsilon)$. 
Since $\|D\Phi-\text{Id}\|_{H^s}\lesssim\epsilon$ for $s$ sufficiently large, we have 
$|\nu_1-\lambda_1|\lesssim\epsilon$, where we recall that $\lambda_1$ stands for the
first eigenvalue of the Dirichlet-Laplacian. Together, the two previous estimates imply the
analogous conclusion of Corollary~\ref{co:oddson}, namely
\[
\chi(t)\gtrsim c_1e^{-\lambda_1 -\tilde{\lambda}_1(t) t},\quad \tilde{\lambda}_1=O(\epsilon).
\]
Let $\t^i$ denote the tangential component of $\g^i$ restricted to $\K^2$. To each multi-index $\vec{\alpha}=(\alpha_1,\alpha_2,\alpha_3)$ we associate
the tangential operator $\t^{\vec{\alpha}}=\t^{\alpha_1}\t^{\alpha_2}\t^{\alpha_3}$.
With $d=3$, we define
\begin{align}
&\E_{3D}(t)=\E_{3D}(q,h)(t):=  \nonumber \\
& \frac{1}{2}\sum_{|\alpha|+2b\leq6}\|\mu^{1/2}\t^{\vec{\alpha}}\partial_t^bv\|_{L^2_x}^2
+\frac{1}{2}\sum_{|\vec{\alpha}|+2b\leq7}|(-\g_Nq)^{1/2}R_J\t^{\vec{\alpha}}\g_t^bh|_{L^2_x}^2
+\frac{1}{2}\sum_{|\vec{\alpha}|+2b\leq7}\|\mu^{1/2}(\t^{\vec{\alpha}}\partial_t^bq+\t^{\vec{\alpha}}\partial_t^b\Psi\cdot v)\|_{L^2_x}^2 \nonumber \\
& \sum_{|\vec{\alpha}|+2b\leq6}\|(1-\mu)^{1/2}\g_{\vec{\alpha}}\partial_t^bv\|_{L^2_x}^2
+\frac{1}{2}\sum_{|\vec{\alpha}|+2b\leq7}\|(1-\mu)^{1/2}(\g_{\vec{\alpha}}\partial_t^bq+\g_{\vec{\alpha}}\partial_t^b\Psi\cdot v)\|_{L^2_x}^2 \nonumber
\end{align}
and
\begin{align}
& \D_{3D}(t)=\D_{3D}(q,h)(t):=  \nonumber \\
& \sum_{|\vec{\alpha}|+2b\leq7}\|\mu^{1/2}\t^{\vec{\alpha}}\partial_t^bv\|_{L^2_x}^2
+\sum_{|\vec{\alpha}|+2b\leq 6}|(-\g_Nq)^{1/2}R_J\t^{\vec{\alpha}}\g_t^bh_t|_{L^2_x}^2
+\sum_{|\vec{\alpha}|+2b\leq 6}\|\mu^{1/2}(\t^{\vec{\alpha}}\partial_t^bq_t+\t^{\vec{\alpha}}\partial_t^b\Psi_t\cdot v)\|_{L^2_x}^2 \nonumber \\ 
& +\sum_{|\vec{\alpha}|+2b\leq7}\|(1-\mu)^{1/2}\g_{\vec{\alpha}}\partial_t^bv\|_{L^2_x}^2
+\sum_{|\vec{\alpha}|+2b\leq6}\|(1-\mu)^{1/2}(\g_{\vec{\alpha}}\partial_t^bq_t+
\g_{\vec{\alpha}}\partial_t^b\Psi_t\cdot v)\|_{L^2_x}^2\,.  \nonumber
\end{align}
The lemmas of Section~\ref{se:basic} carry through analogously, as do the energy estimates of Section~\ref{se:long}.
By the continuity argument of Section~\ref{se:main}, we arrive at the 3-D version of our main theorem:
\begin{theorem}[The 3-D case]
Let $(q_0,h_0)$ satisfy the 
Taylor sign condition~(\ref{eq:wlog}), the strict positivity 
assumption~(\ref{eq:positive}), and the corresponding compatibility conditions. 
Let $\|q_0\|_4/\|q_0\|_0\leq K$.
Then there exists an $\epsilon_0=\epsilon_0(K)>0$ and $\delta_0>0$ such that if
$\E(q_0,h_0)<\epsilon_0^2$,
then there exists a unique global solution to problem~(\ref{eq:ALE}), satisfying
\[
\sup_{0\leq t\leq\infty}\E_{3D}(q(t),h(t))<C\delta_0^2,
\]
for some universal constant $C>0$, and
$
\|q\|_{H^5(B_1(0))}^2\leq C e^{-\beta t},
$
where $\beta=2\lambda_1- C \epsilon _0$ and $\lambda_1$ is the smallest eigenvalue of the 
Dirichlet-Laplacian on the unit ball $B(0,1) \subset \mathbb{R}^3  $.
The moving boundary $\Gamma(t)$ settles asymptotically to some nearby steady surface 
$\bar{\Gamma}$ and we have uniform-in-time estimate
\[
\sup_{0\leq t<\infty}|h-h_0|_{5.5} \lesssim \sqrt{ \delta _0}
\]
\end{theorem}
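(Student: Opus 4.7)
The plan is to mirror the two-dimensional argument while systematically raising the regularity count by one derivative to accommodate the Sobolev embedding $H^{s}(\Omega)\hookrightarrow L^{\infty}(\Omega)$ for $s>3/2$ in three dimensions. First, I would fix the domain via the composition $\Psi=\tilde\Psi\circ\Phi$, where $\Phi:B_1(0)\to\Omega_0$ is a fixed smooth near-identity diffeomorphism and $\tilde\Psi$ solves $\Delta\tilde\Psi=0$ on $\Omega_0$ with boundary values parametrizing $\Gamma(t)$ as a graph over $\Gamma_0$. Since $\Psi$ satisfies $\Delta_{\Phi^{-1}}\Psi=0$ with a smooth near-identity coefficient matrix, the standard elliptic theory still yields $\|\Psi-e\|_{H^s(\Omega)}\lesssim |h|_{H^{s-1/2}(\Gamma)}$, and the whole ALE reformulation \eqref{eq:ALE} carries over unchanged with the normal-velocity identity adjusted to $\mathcal{V}(\Gamma(t))=R_tR/\sqrt{R^2+|\nabla R|_{\K^{2}}^{2}}$.

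Next, with $\E_{3D}$ and $\D_{3D}$ as defined in the excerpt, I would establish the three-dimensional analogs of the a priori bounds of Section~\ref{se:basic}: Lemma~\ref{lm:useful} (using the improved $\chi(t)\gtrsim c_1 e^{-(\lambda_1+\eta/2)t}$, which holds by applying Oddson's Lemma~\ref{lm:oddson} in the disc $B_1(0)\subset\R^3$ and using continuity of the principal eigenvalue of $-\Delta_{\Phi^{-1}}$ in the coefficients), Lemma~\ref{lm:basic} (suboptimal and then improved decay for $h_t$, followed by the graph smallness), and Lemmas~\ref{lm:1}--\ref{lm:2} giving elliptic control of radial derivatives of $q$ by $\E_{3D}$ and $\D_{3D}$. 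The only new point here is that every Sobolev embedding in the proof picks up one extra derivative: for instance, the step estimating $\|f_0-q_t\|_{k-1}$ to boost $\|q\|_{k+1}$ requires $H^{k-1}\hookrightarrow L^\infty$ control of $\Psi_t$ and $A$, which forces the shift of $\E$ from order $6$ (in 2D) to order $7$ (in 3D) and correspondingly shifts the weighted boundary norms.

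The energy identity of Proposition~\ref{lm:lemma1} generalizes verbatim once one replaces $\t^a$ by the tangential multi-indices $\t^{\vec\alpha}$ on $\K^2$, because the derivation in Appendix~A only uses integration by parts in tangential directions and the product structure of the cofactor matrix. The error-term estimates of Proposition~\ref{lm:apriori1} then go through with the same splitting into $\mathcal{G}_{\mathrm{Hopf}}$, $\mathcal{R}_j$, $\mathcal{S}_j$, $\mathcal{H}_j$: all H\"older/Sobolev estimates close because we have raised the top derivative count by one, so every factor that previously lived in $H^{1+\delta}\hookrightarrow L^\infty$ now lives in $H^{3/2+\delta}\hookrightarrow L^\infty$. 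The barrier estimate $|\partial_N q_t|_{L^\infty}\lesssim K^2 c_1 e^{-\beta t/2}$ and the pointwise positivity of $-q_t$ at time $T_K=\bar C\ln K$ (Lemma~\ref{lm:positive}) also carry over: the Duhamel decomposition $-q_t=X-Y-Z$ is dimension-independent, the spectral bound $|c_j|/c_1\le K$ uses only a Hardy-type inequality that holds in any dimension, and the elliptic estimate $\|\varphi_j\|_{H^{2+\delta}}\lesssim \lambda_j^{(2+\delta)/2}$ only has its exponent adjusted. The comparison function $P(t,r)=\kappa_1 e^{-\frac32\lambda_1 t}(\varphi_1(r)-\kappa_2(1-r^2))$ is radial and still a subsolution after choosing $\kappa_2$ large enough, so positivity propagates to $[T_K,\mathcal{T}[$ exactly as in Section~\ref{se:mainproof}.

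The main obstacle I anticipate is the bookkeeping in two places. First, in the elliptic radial-derivative estimate that culminates in Lemma~\ref{lm:1}, one must verify that the linear dependence of $\mathcal{A}$ on the top-order norm of $h$ still holds after the one-derivative shift; this requires the analog of \eqref{ss701} with $\alpha\in\{5,6\}$ replaced by $\alpha\in\{6,7\}$, and one needs the pointwise product $\|\bp^\alpha\mathcal{A}\|_0\lesssim |\sqrt\chi h|_{6.5}^2/\chi$ to be linearly controlled; this works because the worst term $\bp^\alpha D\Psi$ still only appears once. Second, one must quantify the perturbation $|\nu_1-\lambda_1|$ where $\nu_1$ is the first eigenvalue of $-\Delta_{\Phi^{-1}}$ on $B_1(0)\subset\R^3$ and $\lambda_1$ is the first Dirichlet eigenvalue of the ordinary Laplacian; this reduces to a standard continuity-of-eigenvalues argument under the smallness assumption $\|D\Phi-\operatorname{Id}\|_{H^s}\lesssim\epsilon$. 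With these two technical points handled, the continuity/bootstrap argument of Section~\ref{se:mainproof} applies without further change, choosing $\epsilon_0=\epsilon_0(K)$ small enough to close both the smallness of $\E_{3D}$ and the improved lower bound on $\chi(t)$, and yielding the global existence, exponential decay of $\|q\|_{H^5}$, and the uniform $H^{5.5}$ bound on the asymptotic graph displacement.
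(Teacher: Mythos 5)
Your proposal mirrors the paper's own treatment of the $3$-D case in Section~\ref{se:general} essentially step by step: the composite gauge $\Psi=\tilde\Psi\circ\Phi$ solving $\Delta_{\Phi^{-1}}\Psi=0$, the one-derivative shift in $\E_{3D}$, $\D_{3D}$ forced by the Sobolev embedding, the Oddson/eigenvalue-continuity argument giving the improved lower bound on $\chi(t)$ via $\nu_1$ (the first eigenvalue of $-\Delta_{\Phi^{-1}}$), and the observation that the energy identity and barrier/Duhamel arguments are dimension-insensitive. The only slip is calling $B_1(0)\subset\R^3$ a ``disc'' rather than a ball, which is cosmetic; the substance of the argument is the same as the paper's.
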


%
%

        

\appendix
 %
 \numberwithin{equation}{section}
\section{Proof of Proposition~\ref{lm:lemma1}}\label{se:derivation}
To prove the energy identity of Proposition~\ref{lm:lemma1}, we start by applying 
the differential operator of the form $\t^{6-2j}\g_t^j$ to the equation~(\ref{eq:ALEv}).
For $j=0,1,2,3$ we multiply it then by $\t^{6-2j}\g_t^j$ and integrate-by-parts.
Additionally, if $j=1,2,3$ we apply the operator $\t^{7-2j}\g_t^j$ to~(\ref{eq:ALEv}), 
multiply by $\t^{7-2j}\g_t^{j-1}v^i$, and
again integrate-by-parts.

\noindent
Based on these two cases we distinguish between the two different types of identities.  
\subsection{Identities of the first type}\label{se:Aone}
Recall that
$\mu:\bar{\Omega}\to\R$ is a $C^{\infty}$ cut-off function with the
property
\[
\mu(x)\equiv0\,\,\,\,\text{ if }\,\,|x|\leq1/2;\qquad\mu(x)\equiv1\,\,\,\,\text{ if }\,\,3/4\leq|x|\leq1.
\]
Applying the tangential differential operator $\mu\t^6$ 
to the equation~(\ref{eq:ALEv}), multiplying it by $\t^6v^i$ and
integrating over $\Omega$, we obtain
\[
\big(\mu\t^6v^i+\mu\t^6A^k_iq_{,k}+\mu A^k_i\t^6q_{,k},\,\t^6v^i\big)_{L^2}=
\sum_{l=1}^5c_l\big(\mu\t^lA^k_i\t^{6-l}q_{,k},\,\t^6v^i\big)_{L^2},
\]
where $c_{l}={6\choose l}$.
Recalling~(\ref{eq:comm}), we write
\[
\t^6A^k_i=-A^s_i\t^6\Psi^r_{,s}A^k_r+\{\t^6,A^k_i\},
\]
where $\{\t^6,A^k_i\}$ denotes the lower-order commutator defined in~(\ref{eq:comm}b).
With this identity, we obtain
\begin{align}
 \big(\mu\t^6A^k_iq_{,k},\t^6v^i\big)_{L^2(\Omega)}
& =-\big(\mu A^s_i\t^6\Psi^r_{,s}A^k_rq_{,k},\t^6v^i\big)_{L^2(\Omega)}
+\big(\mu\{\t^6,A^k_i\}q_{,k},\t^6v^i\big)_{L^2(\Omega)} \nonumber \\
& =-\int_{\Gamma}q_{,k}A^s_i\t^6\Psi^rA^k_r\t^6v^iN^s
+\int_{\Omega}\mu A^s_i\t^6\Psi^rA^k_rq,_k\t^6v^i_{,s}
+\int_{\Omega}\mathcal{T}_1 \nonumber\\
& =-\int_{\Gamma}q_{,k}A^s_i\t^6\Psi^rA^k_r\t^6v^iN^s
-\int_{\Omega}\mu A^s_i\t^6\Psi^rv^r\t^6v^i_{,s}
+\int_{\Omega}\mathcal{T}_1, \label{eq:t1}
\end{align}
where we have integrated-by-parts with respect to $x^s$ for the second equality, and have
used the identity $v^r=-A^k_rq_{,k}$  for the last equality;
the error term $\mathcal{T}_1$ is given by
\[
\mathcal{T}_1=
(\mu q_{,k}A^s_iA^k_r),_s\t^6\Psi_{\kappa}^r\t^6v^i
+\mu\{\t^6,A^k_i\}q_{,k},\t^6v^i+\mu A^s_i\t^6\Psi^rA^k_rq,_k\{\t^6,\g_s\}v^i.
\]
Furthermore, integration-by-parts with respect to $x^k$ yields 
\be\label{eq:est2}
\begin{array}{l}
 \displaystyle
\big(\mu A^k_i\t^6q_{,k},\,\t^6v^i\big)_{L^2}
=\int_{\Omega}\mu A^k_i\g_k\t^6q\t^6v^i+\int_{\Omega}\mu A^k_i\{\t^6,\g_k\}q\t^6v^i\\
\displaystyle
=-\int_{\Omega}\mu A^k_i\t^6q\t^6v^i_{,k}-\int_{\Omega}(\mu A^k_i),_k\t^6q\t^6v^i+\int_{\Omega}\mu A^k_i\{\t^6,\g_k\}q\t^6v^i,
\end{array}
\ee
where we have used $\t^6q=0$ on $\Gamma$,  and where $\{\t^6,\g_k\}$ denotes the lower-order commutator.
Summing~(\ref{eq:t1}) and~(\ref{eq:est2}), we find that
\be\label{eq:new1}
\begin{array}{l}
\displaystyle
\big(\mu\t^6A^k_iq_{,k}+\mu A^k_i\t^6q_{,k},\t^6v^i\big)_{L^2(\Omega)}
=-\int_{\Gamma}q_{,k}A^s_i\t^6\Psi^rA^k_r\t^6v^iN^s\\
\displaystyle 
-\int_{\Omega}\mu A^k_i\t^6v^i_{,k}\big(\t^6q+\t^6\Psi\cdot v\big)
+\int_{\Omega}(\mathcal{T}_1-(\mu A^k_i),_k\t^6q\t^6v^i+\mu A^k_i\{\t^6,\g_k\}q\t^6v^i).
\end{array}
\ee
The first two terms on the right-hand side of~(\ref{eq:new1}) will
be the source of positive definite quadratic contributions to the energy. 
To extract the quadratic coercive contribution from the first integral on the right-hand side 
of~(\ref{eq:new1}), note that $q,_k=N^k\g_Nq$ on $\Gamma$, and also
recall from (\ref{eq:ntilde}) the normal vector
$\tilde{n}=A^TN$.
Thus
\be\label{eq:aid1}
-\int_{\Gamma}q_{,k}A^s_i\t^6\Psi^rA^k_r\t^6v^iN^s
=\int_{\Gamma}(-\g_Nq)\t^6\Psi^r\tilde{n}^r\t^6v^i\tilde{n}^i
=\int_{\Gamma}(-\g_Nq)\t^6\Psi\cdot \tilde{n}\t^6v\cdot \tilde{n}.
\ee
Using  the boundary condition (\ref{eq:ALEneumann}), we reexpress
 $\t^6v\cdot \tilde{n}$ as
\beas
\t^6v\cdot \tilde{n}&=&\t^6w\cdot \tilde{n}+\t^6(v-w)\cdot \tilde{n}\\
&=&\t^6w\cdot \tilde{n}+\t^6\big(\underbrace{(v-w)\cdot \tilde{n}}_{=0}\big)-\sum_{l=1}^6a_l\t^{6-l}(v-w)\cdot\t^l\tilde{n}\\
&=&\t^6\Psi_t\cdot \tilde{n}-\sum_{l=1}^6a_l\t^{6-l}(v-w)\cdot\t^l\tilde{n}.
\eeas
Due to the above identity and~(\ref{eq:aid1}), we obtain that
\begin{align}
& \int_{\Gamma}(-\g_Nq)\t^6\Psi\cdot \tilde{n}\t^6v\cdot \tilde{n}
= \int_{\Gamma}(-\g_Nq)\t^6\Psi\cdot \tilde{n} \ \t^6\Psi_t\cdot \tilde{n}
-\sum_{l=1}^6a_l\int_{\Gamma}(-\g_Nq)\t^6\Psi\cdot \tilde{n}\t^{6-l}(v-w)\cdot\t^l\tilde{n} \nonumber \\
& \ \ =\int_{\Gamma}(-\g_Nq)\frac{1}{2}\frac{d}{dt}|\t^6\Psi\cdot \tilde{n}|^2\,dx'
+\int_{\Gamma}\g_Nq\t^6\Psi\cdot \tilde{n} \ \t^6\Psi\cdot \tilde{n}_t
-\sum_{l=1}^6a_l\int_{\Gamma}(-\g_Nq)\t^6\Psi\cdot \tilde{n}\t^{6-l}(v-w)\cdot\t^l\tilde{n} \label{eq:Rh}.
\end{align}
Recall that $\tilde{n}=J^{-1}(RN-R_{\theta}\tau)=J^{-1}(N+hN-h_{\theta}\tau)$.
Thus, using $\Psi(t,\xi)=N+h(t,\xi)N$, we obtain via the Leibniz rule
\begin{align*} 
\t^6\Psi\cdot \tilde{n} &= \left[  \t^6N+\t^6(hN) \right] \cdot \left[ (1+h)N-h_{\theta}\tau \right] J^{-1}\\
& =(-R+R\t^6h  +\sum_{a=0}^5c_a\t^ah\t^{6-a}N\cdot(hN-h_{\theta}\tau))J^{-1}\\
& = (-R_J+R_J\t^6h +\sum_{a=0}^5c_a^J\t^ah\t^{6-a}N\cdot(hN-h_{\theta}\tau)),
\end{align*} 
where we have used the relations $\t^2N=-N$ and $N\cdot \tau=0$ and also denoted $c_a^J=c_aJ^{-1}$ (recall $R_J=RJ^{-1}$).
From here we obtain
\be\label{eq:psin}
\begin{array}{l}
\displaystyle
\frac{1}{2}\frac{d}{dt}\big(|\t^6\Psi\cdot \tilde{n}|^2\big)
=\frac{1}{2}\frac{d}{dt}\big(R_J^2|\t^6h|^2\big)\\
\displaystyle
+\frac{d}{dt}\big[\big(-R_J+2R_J\t^6h 
+\sum_{a=0}^5c_a^J\t^ah\t^{6-a}N\cdot(hN-h_{\theta}\tau)\big)
\big(-R_J+\sum_{a=0}^5c_a^J\t^ah\t^{6-a}N\cdot(hN-h_{\theta}\tau)\big)\big].
\end{array}
\ee
Thus, going back to~(\ref{eq:Rh}), we obtain
\beas
\int_{\Gamma}(-\g_Nq)\t^6\Psi\cdot \tilde{n}\t^6\Psi_t\cdot \tilde{n}
&=&\frac{1}{2}\frac{d}{dt}\int_{\Gamma}(-\g_Nq)R_J^2|\t^6h|^2
+\frac{1}{2}\int_{\Gamma}\g_Nq_tR_J^2|\t^6h|^2
+\int_{\Gamma}\mathcal{T}_2,
\eeas
where the error term $\mathcal{T}_2$ is given by
\[
\mathcal{T}_2=
(-\g_Nq)\frac{d}{dt}\big[\big(-R_J+2R_J\t^6h 
+\sum_{a=0}^5c_a^J\t^ah\t^{6-a}N\cdot(hN-h_{\theta}\tau)\big)
\big(-R_J+\sum_{a=0}^5c_a^J\t^ah\t^{6-a}N\cdot(hN-h_{\theta}\tau)\big)\big].
\]
As to the second term on the right-hand side of~(\ref{eq:new1}), note that
\beas
A^k_i\t^6v^i_{,k}&=&\t^6(A^k_iv^i_{,k})-\sum_{l=1}^6c_l\t^lA^k_i\t^{6-l}v^i_{,k}
=-\t^6(q_t+v\cdot w)-\sum_{l=1}^6c_l\t^lA^k_i\t^{6-l}v^i_{,k},
\eeas
where $A^k_iv_{,k}^i=-\div_{\Psi}v=-(q_t+v\cdot w)$ by the parabolic equation~(\ref{eq:ALEheat}).
Thus
\begin{align}
&-\int_{\Omega}\mu A^k_i\t^6v_{,k}^i\big(\t^6q+\t^6\Psi\cdot v\big) =
\int_{\Omega}\mu\t^6(q_t+\Psi_{t}\cdot v)\big(\t^6q+\t^6\Psi\cdot v\big)
+\sum_{l=1}^6c_l\int_{\Omega}\mu\t^lA^k_i\t^{6-l}v_{,k}^i\big(\t^6q+\t^6\Psi\cdot v\big) \nonumber \\
& \ \ \ =\frac{1}{2}\frac{d}{dt}\int_{\Omega}\mu\big(\t^6q+\t^6\Psi\cdot v\big)^2
+\int_{\Omega}\mu(\sum_{l=1}^6d_l\t^{6-l}\Psi_{t}\cdot\t^lv-\t^6\Psi\cdot v_t)
\big(\t^6q+\t^6\Psi\cdot v\big) \nonumber \\
& \ \ \ +\sum_{l=1}^6c_l\int_{\Omega}\mu\t^lA^k_i\t^{6-l}v^i_{,k}\big(\t^6q+\t^6\Psi\cdot v\big) \label{eq:new2}
\end{align}
Combining~(\ref{eq:new1}) -~(\ref{eq:new2}) we obtain
\be\label{eq:en12}
\begin{array}{l}
\displaystyle
\int_{\Omega}\mu|\t^6v|^2\,dx+
\frac{1}{2}\frac{d}{dt}\int_{\Gamma}(-\g_Nq)R_J^2|\t^6h|^2\,dx'
+\frac{1}{2}\frac{d}{dt}\int_{\Omega}\mu(\t^6q+\t^6\Psi\cdot v)^2\,dx\\
\displaystyle
=-\frac{1}{2}\frac{d}{dt}\int_{\Gamma}(-\g_Nq)
|\sum_{a=0}^5c_a\t^aR\t^{6-a}N\cdot \tilde{n}|^2\,dx'+
\int_{\Omega}\mathcal{R}_0+\int_{\Gamma}\mathcal{G}_0
\end{array} 
\ee
with the error terms $\mathcal{R}_0$ and $\mathcal{G}_0$ given by 
\begin{align}
\mathcal{R}_0=&
\mu\sum_{l=1}^5c_l\t^{l}A^k_i\t^{6-l}q_{,k}\t^6v^i
+(\mu q_{,k}A^s_iA^k_r),_s\t^6\Psi^r\t^6v^i+\mu\{\t^6,A^k_i\}q_{,k}\t^6v^i \nonumber\\
&+\mu A^s_i\t^6\Psi^rA^k_rq,_k\{\t^6,\g_s\}v^i
-(\mu A^k_i),_k\t^6q\t^6v^i\nonumber\\
&-\mu A^k_i\{\t^6,\g_k\}q\t^6v^i-\mu\sum_{l=1}^6\big(c_l\t^lA^k_i\t^{6-l}v^i_{,k}
+d_l\t^{6-l}w\cdot\t^lv-\t^6\Psi\cdot v_t\big)\big(\t^6q+\t^6\Psi\cdot v\big);\label{eq:remainder}
\end{align}
\be
\label{eq:remaindergamma}
\begin{array}{l}
\displaystyle
\mathcal{G}_0=
-\g_Nq\t^6\Psi\cdot \tilde{n}\t^6\Psi\cdot \tilde{n}_t
+(-\g_Nq)\frac{d}{dt}\Big[R_J\t^6h\big(
-R_J+\sum_{a=0}^5c_a^J\t^ah\t^{6-a}N\cdot(hN-h_{\theta}\tau)\big)\Big]\\
\displaystyle
-\g_Nq\frac{d}{dt}\Big[\big(-R_J+\sum_{a=0}^5c_a^J\t^ah\t^{6-a}N\cdot(hN-h_{\theta}\tau)\big)^2\Big]
+\sum_{l=1}^6a_l(-\g_Nq)\t^6\Psi\cdot \tilde{n}\t^{6-l}(v-w)\cdot\t^l\tilde{n}.
\end{array}
\ee
Let now $\alpha=(\alpha_1,\alpha_2)$ be an arbitrary multi-index of order $6$.
Applying the operator $(1-\mu)\g^{\alpha}$ to~(\ref{eq:ALEv}) and multiplying 
by $\g^{\alpha}v^i$, we obtain
\beas
&&\big((1-\mu)\g^{\alpha}v^i
+(1-\mu)\g^{\alpha}A^k_iq,_k
+(1-\mu)A^k_i\g^{\alpha}q,_k,\,\g^{\alpha}v^i\big)_{L^2(\Omega)}\\
&&=
-\sum_{0<\beta\leq\alpha}c_{\beta}\big((1-\mu)\g^{\beta}A^k_i\g^{\alpha-\beta}q_{,k},\,
\g^{\alpha}v^i\big)_{L^2}
\eeas
In the same way as above we arrive at the following energy identity
\be\label{eq:en12tilde}
\int_{\Omega}(1-\mu)|\g^{\alpha}v|^2\,dx
+\frac{1}{2}\frac{d}{dt}\int_{\Omega}(1-\mu)(\g^{\alpha}q_t+\g^{\alpha}\Psi\cdot v)^2\,dx
=\int_{\Omega}\tilde{\mathcal{R}}_0\,dx,
\ee
where 
\be\label{eq:r1tilde}
\begin{array}{l}
\displaystyle 
\tilde{\mathcal{R}}_0=
(1-\mu)\sum_{0<\beta<\alpha}c_{\beta}\g^{\beta}A^k_i\g^{\alpha-\beta}q_{,k}\g^{\alpha}v^i
+(1-\mu)\Big((q_{,k}A^s_iA^k_r),_s\g^{\alpha}\Psi^r\g^{\alpha}v^i
+\{\g^{\alpha},A^k_i\}q_{,k}\g^{\alpha}v^i\Big)\\
\displaystyle
-(1-\mu)\sum_{0<\beta\leq\alpha}\big(c_{\beta}\g^{\beta}A^k_i\g^{\alpha-\beta}v_{,k}^i
+d_{\beta}\g^{\alpha-\beta}w\cdot\g^{\beta}v-\g^{\alpha}\Psi\cdot v_t\big)\big(\g^{\alpha}q+\g^{\alpha}
\Psi\cdot v\big),
\end{array}
\ee
Summing the identities~(\ref{eq:en12}) and~(\ref{eq:en12tilde}), with $j=0$ we arrive at 
\begin{align} 
&\int_{\Omega}\mu|\t^{6-2j}\g_t^jv|^2+\sum_{|\alpha|=6-2j}\int_{\Omega}(1-\mu)|\g^{\alpha}\g_t^jv|^2
+\frac{1}{2}\frac{d}{dt}\int_{\Gamma}(-\g_Nq)R_J^2\big|\t^{6-2j}\g_t^jh\big|^2 \nonumber  \\
&  
+\frac{1}{2}\frac{d}{dt}\int_{\Omega}\mu\big(\t^{6-2j}\g_t^jq+\t^{6-2j}\g_t^j\Psi\cdot v\big)^2
+\frac{1}{2}\frac{d}{dt}\sum_{|\alpha|=6-2j}\int_{\Omega}(1-\mu)(\g^{\alpha}\g_t^jq+\g^{\alpha}\g_t^j\Psi\cdot v)^2 \nonumber \\
&  =
-\int_{\Gamma}(-\g_Nq_t)R_J^2|\t^{6-2j}\g_t^jh|^2
+\int_{\Omega}(\mathcal{R}_j
+\tilde{\mathcal{R}}_j)
+\int_{\Gamma}\mathcal{G}_j \,, \label{eq:uno}
\end{align} 
By imitating the same calculation as above 
we obtain the remaining error terms. 
With $j=1,2,3$ the formulas for $\mathcal{R}_j$, $\tilde{\mathcal{R}}_j$, and $\mathcal{G}_j$ in~(\ref{eq:uno}) read
\begin{align}
\mathcal{R}_j=&
\sum_{0<a+b<6-j}\mu d_{ab}\t^a\g_t^bA^k_i\t^{6-2j-a}\g_t^{j-b}q_{,k}\t^{6-2j}\g_t^jv^i
+(\mu q_{,k}A^s_iA^k_r),_s\t^{6-2j}\g_t^j\Psi^r\t^{6-2j}\g_t^jv^i\nonumber\\
&+\mu A^s_i\t^{6-2j}\g_t^j\Psi^rA^k_rq,_k\{\t^{6-2j},\g_s\}\g_t^jv^i
+\mu\{\t^{6-2j}\g_t^j,A^k_i\}q_{,k}\t^{6-2j}\g_t^jv^i\nonumber\\
&-(\mu A^k_i),_k\t^{6-2j}\g_t^jq\t^{6-2j}\g_t^jv^i
-\mu A^k_i\{\t^{6-2j},\g_k\}\g_t^jq\t^{6-2j}\g_t^jv^i\nonumber\\
&-\mu\sum_{0\leq a+b<6-j}\big(d_{ab}\t^{6-2j-a}\g_t^{j-b}A^k_i\t^a\g_t^bv^i_{,k}
+d_{ab}\t^a\g_t^b\Psi_t\cdot\t^{6-2j-a}\g_t^{j-b}v-\t^{6-2j}\g_t^j\Psi\cdot v_t\big)\nonumber\\
&\quad\times\big(\t^{6-2j}\g_t^jq+\t^{6-2j}\g_t^j\Psi\cdot v\big);\label{eq:remainderj}
\end{align}
\begin{align}
\mathcal{G}_j & =
-\g_Nq\t^{6-2j}\g_t^j\Psi\cdot \tilde{n}\t^{6-2j}\g_t^j\Psi\cdot \tilde{n}_t 
+\g_Nq\g_t\Big[\t^{6-2j}\g_t^jhR_J(-R_J+\sum_{a=0}^{5-2j}d_a^J\t^a\g_t^jh\t^{6-2j-a}N\cdot \tilde{n})\Big] \nonumber \\
& \ \ +\frac{d}{dt}\Big[\big(\sum_{a=0}^{5-2j}d_a^J\t^a\g_t^jh\t^{6-2j-a}N\cdot \tilde{n}\big)^2\Big] \nonumber \\
& \ \ +\sum_{0\leq a+b<6-j}d_{ab}(-\g_Nq)\t^{6-2j}\g_t^j\Psi\cdot \tilde{n}\t^{6-2j-a}\g_t^{j-b}(v-w)\cdot\t^a\g_t^b\tilde{n} \label{eq:remaindergammaj}.
\end{align}
\begin{align}
& \tilde{\mathcal{R}}_j=
(1-\mu)\sum_{0<\beta<\alpha}c_{\beta}\g^{\beta}A^k_i\g^{\alpha-\beta}q_{,k}\g^{\alpha}v^i
+(1-\mu)\tilde{\mathcal{T}_1} \nonumber \\
& \ \ \ -(1-\mu)\sum_{0<\beta\leq\alpha}\big(c_{\beta}\g^{\beta}A^k_i\g^{\alpha-\beta}v^i_{,k}
+(1-\mu)d_{\beta}\g^{\alpha-\beta}w\cdot\g^{\beta}v-\g^{\alpha}\Psi\cdot v_t\big)\big(\g^{\alpha}q+\g^{\alpha} \Psi\cdot v\big). \label{eq:rjtilde}
\end{align}

\subsection{Identities of the second type}
Applying $\t^5\g_t$ to~(\ref{eq:ALEv}) and
computing the $L^2(\Omega)$-product with
$\mu\t^5v^i$ we obtain
\[
\big(\mu\t^5v^i_t+\mu\t^5A^k_{i,t}q_{,k}+\mu A^k_i\t^5q_{,kt},\,\t^5v^i\big)_{L^2}=
\sum_{0<a+b<6\atop a\leq5,\,b\leq1}c_{ab}\big(\mu\t^a\g_t^bA^k_i\t^{5-a}\g_t^{1-b}q_{,k},\,\t^5v^i\big)_{L^2},
\]
where $c_{ab}$ are constants appearing due to the usage of Leibniz product rule above.
Recalling~(\ref{eq:comm}), we write
\[
\t^5A^k_{i,t}=-A^s_i\t^5\Psi^r_{,st}A^k_r+\{\t^5\g_t,A^k_i\},
\]
where $\{\t^5\g_t,A^k_i\}$ stands for the lower order commutator defined in~(\ref{eq:comm}).
With this identity, we obtain
\be\label{eq:s2}
\begin{array}{l}
\displaystyle
\big(\mu\t^5A^k_{i,t}q_{,k},\t^5v^i\big)_{L^2(\Omega)}
=-\big(\mu A^s_i\t^5\Psi^r_{,st}A^k_rq_{,k},\t^5v^i\big)_{L^2(\Omega)}
+\big(\mu\{\t^5\g_t,A^k_i\}q_{,k},\t^5v^i\big)_{L^2(\Omega)}\\
\displaystyle
=-\int_{\Gamma}q_{,k}A^s_i\t^5\Psi^r_tA^k_r\t^5v^iN^s
+\int_{\Omega}\mu A^s_i\t^5\Psi^r_tA^k_rq,_k\t^5v^i_{,s}
+\int_{\Omega}\mathcal{U}_1\\
\displaystyle
=-\int_{\Gamma}q_{,k}A^s_i\t^5\Psi^r_tA^k_r\t^5v^iN^s
-\int_{\Omega}\mu A^s_i\t^5\Psi^r_tv^r\t^5v^i_{,s}
+\int_{\Omega}\mathcal{U}_1,
\end{array}
\ee
where we have integrated-by-parts with respect to $x^s$ in the second equality and
we have also used the identity $v^r=-A^k_rq_{,k}$ to write the last
line more concisely. The error term $\mathcal{U}_1$ is given by
\[
\mathcal{U}_1=
(\mu q_{,k}A^s_iA^k_r),_s\t^5\Psi^r_t\t^5v^i
+\mu\{\t^5\g_t,A^k_i\}q_{,k},\t^5v^i+\mu A^s_i\t^5\Psi^r_tA^k_rq,_k\{\t^5,\g_s\}v^i.
\]
Furthermore, integrating by parts with respect to $x^k$
\be\label{eq:s3}
\begin{array}{l}
 \displaystyle
\big(\mu A^k_i\t^5\g_tq_{,k},\,\t^5v^i\big)_{L^2}
=\int_{\Omega}\mu A^k_i\g_k\t^5\g_tq\t^5v^i+\int_{\Omega}\mu A^k_i\{\t^5,\g_k\}q_t\t^5v^i\\
\displaystyle
=-\int_{\Omega}\mu A^k_i\t^5q_t\t^5v^i_{,k}
-\int_{\Omega}(\mu A^k_i),_k\t^5q_t\t^5v^i+\int_{\Omega}\mu A^k_i\t^5q_t\{\t^5,\g_k\}v^i
+\int_{\Omega}\mu A^k_i\{\t^5,\g_k\}q_t\t^5v^i,
\end{array}
\ee
where we have used $\t^5q_t=0$ on $\Gamma$.
Summing~(\ref{eq:s2}) and~(\ref{eq:s3}), we obtain
\be\label{eq:snew1}
\begin{array}{l}
\displaystyle
\big(\mu\t^5A^k_{i,t}q_{,k}+\mu A^k_i\t^5q_{,kt},\t^5v^i\big)_{L^2(\Omega)}
=-\int_{\Gamma}q_{,k}A^s_i\t^5\Psi_t^rA^k_r\t^5v^iN^s
-\int_{\Omega}\mu A^k_i\t^5v_{,k}^i\big(\t^5q_t+\t^5\Psi_t\cdot v\big)\\
\displaystyle
+\int_{\Omega}\big(\mathcal{U}_1-(\mu A^k_i),_k\t^5q_t\t^5v^i+\mu A^k_i\t^5q_t\{\t^5,\g_k\}v^i
+\mu A^k_i\{\t^5,\g_k\}q_t\t^5v^i\big).
\end{array}
\ee
The first two terms on the right-hand side of~(\ref{eq:snew1}) will
be the source of positive definite quadratic contributions to the energy. 
To extract the quadratic coercive contribution from the first integral on the right-hand side 
of~(\ref{eq:snew1}), note that $q,_k=N^k\g_Nq$ on $\Gamma$. Thus
\[
-\int_{\Gamma}q_{,k}A^s_i\t^5\Psi_t^rA^k_r\t^5v^iN^s
=\int_{\Gamma}(-\g_Nq)\t^5\Psi_t^r\tilde{n}^r\t^5v^i\tilde{n}^i
=\int_{\Gamma}(-\g_Nq)\t^5\Psi_t\cdot \tilde{n}\t^5v\cdot \tilde{n}.
\]
Just like in Section~\ref{se:Aone} - as in the identities leading up to~(\ref{eq:psin}) - we obtain
\begin{align}
& (-\g_Nq)\t^5\Psi_t\cdot \tilde{n}\t^5v\cdot \tilde{n}
=|\t^5h_t|^2R_J^2+|\sum_{a=0}^4c_a^J\t^ah_t\t^{5-a}N\cdot \tilde{n}|^2
+2\t^5h_tR_J\sum_{a=0}^4c_a^J\t^ah_t\t^{5-a}N\cdot \tilde{n} \nonumber \\
& \ \ +\sum_{l=1}^4a_l(-\g_Nq)\t^5\Psi\cdot \tilde{n}\t^5v\cdot \tilde{n}\t^{5-l}(v-w)\cdot\t^l\tilde{n},
\end{align}
where $c_a^J=c_aJ^{-1}$ and $c_a$ are some universal constants.
As to the second term on the right-hand side of~(\ref{eq:snew1}), note that
\[
A^k_i\t^5v^i_{,k}=\t^5(A^k_iv^i_{,k})-\sum_{l=1}^5c_l\t^lA^k_i\t^{5-l}v_{,k}^i
=-\t^5(q_t+v\cdot w)-\sum_{l=1}^5c_l\t^lA^k_i\t^{5-l}v_{,k}^i,
\]
where $A^k_iv_{,k}^i=-\div_{\Psi}v=-(q_t+v\cdot w)$ by the parabolic equation~(\ref{eq:ALEheat}).
Thus
\be\label{eq:snew2}
\begin{array}{l}
\displaystyle
-\int_{\Omega}\mu A^k_i\t^5v_{,k}^i\big(\t^5q_t+\t^5\Psi_t\cdot v\big)\\
\displaystyle
=\int_{\Omega}\mu\t^5(q_t+\Psi_{t}\cdot v)\big(\t^5q_t+\t^5\Psi_t\cdot v\big)
+\sum_{l=1}^5c_l\int_{\Omega}\mu\t^lA^k_i\t^{5-l}v_{,k}^i\big(\t^5q_t+\t^5\Psi_t\cdot v\big)\\
\displaystyle
=\int_{\Omega}\mu\big(\t^5q_t+\t^5\Psi_t\cdot v\big)^2
+\sum_{l=1}^5d_l\int_{\Omega}\mu\t^{5-l}\Psi_{t}\cdot\t^lv
\big(\t^5q_t+\t^5\Psi_t\cdot v\big)\\
\displaystyle
\quad+\sum_{l=1}^5c_l\int_{\Omega}\mu\t^lA^k_i\t^{5-l}v_{,k}^i\big(\t^5q_t+\t^5\Psi_t\cdot v\big)
\end{array}
\ee
Combining~(\ref{eq:snew1}) -~(\ref{eq:snew2}) we obtain
\be\label{eq:sen12}
\begin{array}{l}
\displaystyle
\frac{1}{2}\frac{d}{dt}\int_{\Omega}\mu|\t^5v|^2\,dx+\int_{\Gamma}(-\g_Nq)|\t^5\Psi_t\cdot \tilde{n}|^2\,dx'
+\int_{\Omega}\mu(\t^5q_t+\t^5\Psi_t\cdot v)^2\,dx\\
\displaystyle 
=\int_{\Gamma}\g_Nq|\sum_{a=0}^4c_a^J\t^ah_t\t^{5-a}\xi\cdot \tilde{n}|^2
+\int_{\Omega}\mathcal{S}_1+\int_{\Gamma}\mathcal{H}_1
\end{array} 
\ee
with the error terms $\mathcal{S}_j$ and $\mathcal{H}_j$ given by: 
\begin{align}
\mathcal{S}_j=&
\sum_{0<a+b<6-j\atop a\leq7-2j,\,b\leq j}d_{ab}\mu\t^a\g_t^bA^k_i\t^{7-2j-a}
\g_t^{j-b}q_{,k}\t^{7-2j}\g_t^{j-1}v^i
+(\mu q_{,k}A^s_iA^k_r),_s\t^{7-2j}\g_t^j\Psi^r\t^{7-2j}\g_t^{j-1}v^i\nonumber\\
&+\mu\{\t^{7-2j}\g_t^j,A^k_i\}q_{,k},\t^{7-2j}\g_t^{j-1}v^i+\mu A^s_i\t^{7-2j}\g_t^j\Psi^rA^k_rq,_k\{\t^{7-2j},\g_s\}\g_t^{j-1}v^i\nonumber\\
&-(\mu A^k_i),_k\t^{7-2j}\g_t^jq\t^{7-2j}\g_t^{j-1}v^i
+\mu A^k_i\t^{7-2j}\g_t^jq\{\t^{7-2j},\g_k\}\g_t^{j-1}v^i\nonumber\\
&+\mu A^k_i\{\t^{7-2j},\g_k\}\g_t^jq\t^{7-2j}\g_t^{j-1}v^i
-\big(\t^{7-2j}\g_t^jq+\t^{7-2j}\g_t^j\Psi\cdot v\big)\nonumber\\
&\times
\sum_{0\leq a+b<6-j}d_{ab}\big(\mu\t^a\g_t^b\Psi_{t}\cdot\t^{7-2j-a}
\g_t^{j-1-b}v
+\mu\t^{7-2j-a}\g_t^{j-1-b}A^k_i
\t^{a}\g_t^bv^i_{,k}\big).\label{eq:sremainder}
\end{align}
\begin{align}
\mathcal{H}_j=&
2\g_Nq\t^{7-2j}\g_t^jhR_J\sum_{0\leq a+b<7-j}c_{ab}^J\t^a\g_t^{b}h
\t^{7-2j-a}\g_t^{j-b}N\cdot \tilde{n}\nonumber\\
&+(-\g_Nq)\t^{7-2j}\g_t^{j}\Psi
\cdot \tilde{n}\sum_{0\leq a+b<6-j}d_{ab}\t^{7-2j-a}\g_t^{j-1-b}(v-w)
\cdot\t^a\g_t^b\tilde{n}.\label{eq:sremaindergamma}
\end{align}
Note that the first line of~(\ref{eq:sremaindergamma}) appears as an expanded difference
between two positive definite expressions $(-\g_Nq)|\t^{7-2j}\g_t^j\Psi\cdot \tilde{n}|^2$ and
$(-\g_Nq)|\t^{7-2j}\g_t^jh|^2$. We do this just like after~(\ref{eq:Rh}) using the formula
$\tilde{n}=J^{-1}(N+hN-h_{\theta}\tau)$ and the parametrization $\Psi(t,\xi)=(1+h(t,\xi))N$.
Let now $\alpha=(\alpha_1,\alpha_2)$ be an arbitrary multi-index of order $5$.
Applying the operator $(1-\mu)\g^{\alpha}\g_t$ to~(\ref{eq:ALEv}) and multiplying 
by $\g^{\alpha}v^i$, we obtain
\beas
&&\big((1-\mu)\g^{\alpha}v^i_t
+(1-\mu)\g^{\alpha}A^k_{i,t}q,_k
+(1-\mu)A^k_i\g^{\alpha}q,_{kt},\,\g^{\alpha}v^i\big)_{L^2(\Omega)}\\
&&=
-\sum_{0<|\beta|+b<5\atop \beta\leq\alpha;b\leq1}c_{\beta b}\big((1-\mu)\g^{\beta}\g_t^bA^k_i\g^{\alpha-\beta}\g_t^{1-b}q_{,k},\,
\g^{\alpha}v^i\big)_{L^2}.
\eeas
In the same way as above we arrive at the following energy identity
\be\label{eq:sen12tilde}
\frac{1}{2}\frac{d}{dt}\int_{\Omega}(1-\mu)|\g^{\alpha}v|^2\,dx
+\int_{\Omega}(1-\mu)(\g^{\alpha}q_t+\g^{\alpha}\Psi_t\cdot v)^2\,dx
=\int_{\Omega}\tilde{\mathcal{S}}_1\,dx,
\ee
where 
\be\label{eq:s1tilde}
\begin{array}{l}
\displaystyle 
\tilde{\mathcal{S}}_1=
\sum_{0<|\beta|+b<5\atop \beta\leq\alpha;b\leq1}c_{\beta b}(1-\mu)\g^{\beta}\g_t^bA^k_i\g^{\alpha-\beta}\g_t^{1-b}q_{,k}
\g^{\alpha}v^i
+(1-\mu)\Big((q_{,k}A^s_iA^k_r),_s\g^{\alpha}\Psi^r_t\g^{\alpha}v^i
+\{\g^{\alpha}\g_t,A^k_i\}q_{,k}\g^{\alpha}v^i\Big)\\
\displaystyle
-(1-\mu)\sum_{0<\beta\leq\alpha}c_{\beta}\mu\g^{\alpha-\beta}\Psi_{t}\cdot\t^lv
\big(\t^{\alpha}q_t+\t^{\alpha}\Psi_t\cdot v\big)
-(1-\mu)\sum_{\beta\leq\alpha}c_l\mu\g^{\beta}A^k_i\t^{\alpha-\beta}v^i_{,k}\big(\t^{\alpha}q_t+\t^{\alpha}\Psi_t\cdot v\big),
\end{array}
\ee
For a general $j\in\{1,2,3\}$ we have
\be\label{eq:sjtilde}
\begin{array}{l}
\displaystyle 
\tilde{\mathcal{S}}_j=
\sum_{0<|\beta|+b<7-2j\atop \beta\leq\alpha;b\leq j}
d_{\beta b}(1-\mu)\g^{\beta}\g_t^bA^k_i\g^{\alpha-\beta}\g_t^{j-b}q_{,k}
\g^{\alpha}v^i
+(1-\mu)\tilde{\mathcal{U}}_1\\
\displaystyle
-(1-\mu)\sum_{0\leq|\beta|+b<|\alpha|}d_{\beta b}
\mu\g^{\beta}\g_t^{b}\Psi_{t}\cdot\g^{\alpha-\beta}\g_t^{j-1-b}v
\big(\t^{\alpha}\g_t^{j}q+\t^{\alpha}\g_t^j\Psi\cdot v\big)\\
\displaystyle
-(1-\mu)\sum_{0\leq|\beta|+b<|\alpha|}d_{\beta b}
\mu\g^{\alpha-\beta}\g_t^{j-1-b}A^k_i\t^{\beta}\g_t^bv^i_{,k}
\big(\t^{\alpha}\g_t^jq+\t^{\alpha}\g_t^j\Psi\cdot v\big).
\end{array}
\ee
Summing the identities~(\ref{eq:sen12}) and~(\ref{eq:sen12tilde}) we arrive at 
\be\label{eq:due}
\begin{array}{l}
\displaystyle
\frac{1}{2}\frac{d}{dt}\big\{\int_{\Omega}\mu|\t^{7-2j}\g_t^jv|^2+
\sum_{|\alpha|=7-2j}\int_{\Omega}(1-\mu)|\g^{\alpha}\g_t^jv|^2\big\}
+\int_{\Gamma}(-\g_Nq)R_J^2\big|\t^{7-2j}\g_t^jh\big|^2\\
\displaystyle
+\int_{\Omega}\mu\big(\t^{7-2j}\g_t^jq+\t^{7-2j}\g_t^j\Psi\cdot v\big)^2
+\sum_{|\alpha|=7-2j}\int_{\Omega}(1-\mu)(\g^{\alpha}\g_t^jq+
\g^{\alpha}\g_t^j\Psi\cdot v)^2\\
\displaystyle
=\int_{\Omega}(\mathcal{S}_j+\tilde{\mathcal{S}}_j)
+\int_{\Gamma}\mathcal{H}_j,\,\quad j=1,2,3.
\end{array}
\ee
Summing the identities~(\ref{eq:uno}) for $j=0,1,2,3$ and~(\ref{eq:due}) for $j=1,2,3$, we conclude the proof of Proposition~\ref{lm:lemma1}.


\section{Proof of the inequality (\ref{eq:barrier2})}\label{se:barrier}
We use the comparison function
$P$ defined in~(\ref{eq:barrier}) with the same $\kappa_2$ and $\kappa_1$.
Note that $\kappa_1=C_{*}\epsilon c_1$ is defined as a multiple of $c_1$ for
some constant $C_*>0$.
Using~(\ref{eq:neg}) and upon possibly enlarging $C_*$, we infer
$(\g_t-a_{ij}-b_i)(-q_t+P)\leq0$. Theorem 1 from~\cite{Od} guarantees
\be\label{eq:oddsonagain}
-q_t+P\leq C_0c_1\rho e^{(-\lambda_1+C\epsilon )t}, 
\ee
where $\rho(r)=1-r$ stands for the distance function to the boundary $\Gamma$. 
Note that the constant $\kappa_1$ in the definition~(\ref{eq:barrier}) is chosen right after~(\ref{eq:track}).
It is in particular proportional to $E_{\beta}(0)^{1/2}\leq \|q_0\|_4$. By definition of $K$ we have that $\|q_0\|_4\leq K\|q_0\|_0$.
Since however $\|q_0\|_0\leq Kc_1$, we obtain
$
P/\rho\leq C K^2 c_1e^{-3\lambda_1t/2}.
$
Similarly, the constant $C_0$ is proportional to the $L^{\infty}$-norm of the initial datum for $-q_t+P$, wherefrom
we again obtain $C_0\leq K^2 c_1$ by the same argument as above.
Dividing by $\rho$ in~(\ref{eq:oddsonagain}), from the above inequality, we infer that
\[
|\g_Nq_t|_{\infty}\leq CK^2c_1 e^{(-\lambda_1+C\epsilon )t}.
\]
This proves the inequality~(\ref{eq:barrier2}).
 \numberwithin{equation}{section}





 \section*{Acknowledgements}

MH was supported by the National Science Foundation under grant NSF DMS-1211517 and
SS was supported by the National
Science Foundation under grant DMS-1001850.
The authors would like to thank David Jerison for discussions leading to Lemma~\ref{lm:hardy}.
They also thank the anonymous referee for helpful comments.



\frenchspacing
\bibliographystyle{plain}

\end{document}